\newcommand{\nn}{\mathbb{N}} 
\newcommand{\isom}{\cong} 
\newcommand{\ext}{\mathbf{E}} 
\newcommand{\rext}{\mathbf{R}} 
\newcommand{\lext}{\mathbf{L}} 
\newcommand{\edge}{\mathcal{E}} 
\newcommand{\vertex}{\mathcal{V}} 
\newcommand{\ret}{\mathcal{R}} 
\newcommand{\emptyw}{\varepsilon} 
\DeclareMathOperator{\Card}{Card} 
\DeclareMathOperator{\img}{Im} 
\DeclareMathOperator{\eval}{eval} 
\DeclareMathOperator{\tail}{tail} 
\DeclareMathOperator{\init}{init} 
\DeclareMathOperator{\rank}{rank} 
\theoremstyle{plain}
\newtheorem{theorem}{Theorem}[section]
\newtheorem{lemma}[theorem]{Lemma}
\newtheorem{proposition}[theorem]{Proposition}
\newtheorem{corollary}[theorem]{Corollary}
\theoremstyle{definition} 
\newtheorem{definition}[theorem]{Definition}
\theoremstyle{remark}
\newtheorem{remark}[theorem]{Remark}
\newtheorem{example}[theorem]{Example}
\newtheorem{question}[theorem]{Question}
\newtheorem*{claim}{Claim}
\begin{document}

\title{Suffix-connected languages}  

\thanks{%
    The author is grateful for the financial support provided by the Centre for Mathematics of the University of Coimbra (UIDB/00324/2020, funded by the Portuguese Government through FCT/MCTES), the Centre for Mathematics of the University of Porto (UIDB/00144/2020, funded by the Portuguese Government through FCT/MCTES), as well as a PhD grant from FCT/MCTES (PD/BD/150350/2019). Special thanks go to Jorge Almeida and Alfredo Costa for many helpful discussions and comments which greatly improved this paper.}

\author[H.~Goulet-Ouellet]{Herman Goulet-Ouellet}

\address{University of Coimbra, CMUC, Department of Mathematics,
    and University of Porto, CMUP, Department of Mathematics}

\email{hgouletouellet@student.uc.pt}

\subjclass[2010]{68Q45, 68R15}

\keywords{Tree sets, Extension graphs, Return words, Rauzy graphs, Stallings algorithm, Free groups}

\begin{abstract}
    Inspired by a series of papers initiated in 2015 by Berthé et al., we introduce a new condition called suffix-connectedness. We show that the groups generated by the return sets of a uniformly recurrent suffix-connected language lie in a single conjugacy class of subgroups of the free group. Moreover, the rank of the subgroups in this conjugacy class only depends on the number of connected components in the extension graph of the empty word. We also show how to explicitly compute a representative of this conjugacy class using the first order Rauzy graph. Finally, we provide an example of suffix-connected, uniformly recurrent language that contains infinitely many disconnected words. 
\end{abstract}

\maketitle

\section{Introduction}
\label{s:intro}

In \cite{Berthe2015}, Berthé et al.\ introduced the notion of extension graph and used it to study the subgroups generated by the return sets in uniformly recurrent languages. One result achieved in that paper, dubbed the \emph{Return Theorem}, states that if $L$ is a uniformly recurrent language on the alphabet $A$ such that all the extension graphs of $L$ are trees, then the return sets of $L$ are all bases of the free group on $A$ \cite[Theorem~4.5]{Berthe2015}. Moreover, they also show that part of this result holds under weaker assumptions: if we merely assume that the extension graphs of $L$ are connected, then the return sets of $L$ all generate the free group on $A$ \cite[Theorem~4.7]{Berthe2015}. The aim of this paper is to give a weaker condition under which a similar conclusion still holds. To do this, we introduce \emph{suffix extension graphs}, a notion generalizing the extension graphs of \cite{Berthe2015}. This allows us to define a new condition called \emph{suffix-connectedness}. Our main result is the following:
\begin{theorem}\label{t:main}
    Let $L$ be a suffix-connected uniformly recurrent language on an alphabet $A$. Then the subgroups generated by the return sets of $L$ all lie in the same conjugacy class and their rank is $n-c+1$, where $n=\Card(A)$ and $c$ is the number of connected components of the extension graph of the empty word.
\end{theorem}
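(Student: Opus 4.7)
My plan is to realize each subgroup generated by a return set as the fundamental group of a finite labelled graph, and to compute its rank by an Euler characteristic argument, with the suffix-connectedness hypothesis serving to control the Stallings folding. First, I would dispatch the conjugacy claim using uniform recurrence: given $v, w \in L$, one can find $u \in L$ containing both as factors, and then rewrite the return words to $v$ as products of return words to $u$ conjugated by the piece of $u$ separating the relevant occurrences. This classical argument shows that $\langle \ret_L(v) \rangle$ and $\langle \ret_L(w) \rangle$ are conjugate, so it suffices to compute the rank for a single base word.

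I would then fix a base word $w$ and construct the Stallings graph $\Gamma_w$ of $\langle \ret_L(w) \rangle$: start from a bouquet of circles labelled by the return words to $w$ and fold until each vertex has at most one outgoing (resp.\ incoming) edge per letter. The subgroup becomes $\pi_1(\Gamma_w)$ at the basepoint, with rank $E - V + 1$. To identify $\Gamma_w$ concretely, I would compare it with the first-order Rauzy graph $R$ of $L$, whose vertices are the letters of $A$ and whose edges record the biletters of $L$. There is a natural label-preserving morphism $\Gamma_w \to R$, and the rank computation reduces to understanding its vertex fibres and edge image.

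The crux—and the main difficulty—is to show that, under suffix-connectedness, two letters get identified in $\Gamma_w$ precisely when they lie in the same connected component of the extension graph of $\emptyw$, and that every letter of $A$ appears on some edge. For the nontrivial direction I expect to induct on the length of a path in the extension graph of $\emptyw$, using each biletter along the path together with a suffix-connecting chain (supplied by suffix-connectedness) to exhibit two return words sharing a common suffix, thereby forcing a fold. The converse direction should reflect that every fold produced by the Stallings procedure is witnessed by a shared suffix between return words, which in turn yields an edge of the extension graph of $\emptyw$. Once this identification is in place, the folded graph has exactly $c$ vertices and $n$ edges, and Euler characteristic delivers the desired rank $n - c + 1$.
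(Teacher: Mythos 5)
Your first step is where the proposal breaks down. The ``classical argument'' you invoke only gives one-sided inclusions: if $v$ is a prefix of $u$, then every return word to $u$ is a product of return words to $v$, so the return group of $u$ sits inside (a conjugate of) the return group of $v$; applying this to a common superword of $v$ and $w$ places one group inside conjugates of the other two, but it does not make $\langle\ret_{v,\emptyw}\rangle$ and $\langle\ret_{w,\emptyw}\rangle$ conjugate to each other. Worse, the statement you are trying to dispatch by uniform recurrence alone is false in general: in the Thue--Morse language, $\ret_{0,\emptyw}=\{0,10,110\}$ generates the whole free group on $\{0,1\}$, whereas every return word to $00$ (e.g.\ $0011$, $001011$, $001101$, $00101101$) has as many $0$'s as $1$'s --- every occurrence of $00$ sits at an odd position, so each such return word is a one-letter cyclic shift of a product of the blocks $01$ and $10$ --- hence the group generated by $\ret_{00,\emptyw}$ has abelianized image inside the diagonal of $\mathbb{Z}^2$ and is a proper subgroup; since the image in the abelianization is a conjugacy invariant, these two return groups are not conjugate. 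So the conjugacy assertion is not a preliminary reduction: it is precisely the part of the theorem where suffix-connectedness must be used (Thue--Morse is not suffix-connected), and in the paper it is obtained by sandwiching the return group between two Rauzy groups, $H_{u,sv}\leq K_{u,v}\leq H_{u,v}$ with $s$ a longest return word (Proposition~\ref{p:zigzag}), and then using suffix-connectedness to force $H_{u,sv}=H_{u,v}$, hence $K_{u,v}=H_{u,v}=vH_{b,\emptyw}v^{-1}$ (Corollary~\ref{c:suff-connected} and Lemma~\ref{l:level1conjugacy}).

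The same lacuna reappears in your ``crux'' step. The Stallings graph of $\langle\ret_{w,\emptyw}\rangle$ lives at the scale of $|w|$ and of the return words to $w$, while the extension graph of $\emptyw$ and the first-order Rauzy graph live at scale $1$; the whole difficulty is to transport connectivity information from suffix extension graphs down to the folded graph of the return group, and your sketch (``a suffix-connecting chain yields two return words sharing a common suffix, forcing a fold'') neither specifies this mechanism nor explains why the resulting folds suffice, so as stated it is close to a restatement of the theorem. In the paper this bridge is built in two pieces: Lemma~\ref{l:rauzy1} and Proposition~\ref{p:paths} convert a path in a depth-$d$ suffix extension graph into an explicit trivially-labeled path in the order-$m$ Rauzy graph, proving that the maps $\tail\colon G_{m,m}\to G_{m-1,m-1}$ are group-preserving; and Proposition~\ref{p:zigzag}(2) (using a longest return word in place of the prefix-code argument of Berth\'e et al.) pins the return group to a Rauzy group. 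Only after these reductions does your Euler-characteristic count apply, in the form of Lemma~\ref{l:level1groups}: the Stallings quotient of $G_{1,1}$ has exactly $c$ vertices and $n$ edges, giving rank $n-c+1$. Your final counting step agrees with the paper; the two preceding steps are missing, and the first as proposed is incorrect.
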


Our proof is constructive, in the sense that we can also deduce a way to explicitly compute a representative for this conjugacy class. Moreover, the proof of Theorem~\ref{t:main} has two notable consequences that we wish to highlight now. The first one is a characterization of suffix-connected, uniformly recurrent languages whose return sets generate the full free group.
\begin{corollary}\label{c:main1}
    Let $L$ be a suffix-connected and uniformly recurrent language on the alphabet $A$. Then the following statements are equivalent: 
    \begin{enumerate}
        \item All the return sets of $L$ generate the free group on $A$.
        \item Some return set of $L$ generates a group of rank $\Card(A)$.
        \item The extension graph of the empty word is connected.
    \end{enumerate}
\end{corollary}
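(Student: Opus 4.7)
The plan is to prove the cycle $(1) \Rightarrow (2) \Rightarrow (3) \Rightarrow (1)$, using Theorem~\ref{t:main} as the main tool together with its promised constructive content.

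The first two implications should be quick. For $(1) \Rightarrow (2)$ I would simply observe that the free group on $A$ has rank $\Card(A)$, so any single return set already serves as a witness. For $(2) \Rightarrow (3)$ I would invoke Theorem~\ref{t:main}: it asserts that every return set generates a subgroup of rank $n - c + 1$, where $n = \Card(A)$ and $c$ is the number of components of the extension graph of the empty word; if some return set has rank $n$, then $n - c + 1 = n$, which forces $c = 1$, so the extension graph of $\varepsilon$ is connected.

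The main obstacle will be $(3) \Rightarrow (1)$. From $c = 1$, Theorem~\ref{t:main} already tells me that every return set generates a subgroup of rank exactly $n$ and that all these subgroups lie in a single conjugacy class inside the free group on $A$. What this does \emph{not} give for free is the equality with the full free group: a rank-$n$ finitely generated subgroup of the free group on $A$ can well be proper (for instance $\langle a^2, b \rangle$ inside $F_2$). To bridge this gap I would appeal to the explicit representative announced after the statement of Theorem~\ref{t:main}, computed from the first order Rauzy graph. My expectation is that this representative is built as a folded Stallings-type graph whose vertex set corresponds to the $c$ connected components of the extension graph of $\varepsilon$; when $c = 1$, the graph should collapse to a single vertex carrying $n$ loops, whose fundamental group is freely generated by those loops and hence coincides with the whole free group on $A$.

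Finally, once I know that one member of the conjugacy class equals the free group on $A$, every other member must too, since the only conjugate of the whole group inside itself is the whole group. This yields $(1)$ and closes the cycle. The delicate step on which the whole argument hinges is therefore the identification of the constructed representative when $c = 1$; the rest is a direct application of the rank formula from Theorem~\ref{t:main}.
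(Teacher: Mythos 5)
Your proposal is correct and follows the same overall cycle as the paper: (1)$\Rightarrow$(2) is trivial, and (2)$\Rightarrow$(3) is exactly the paper's argument via the rank formula $n-c+1$ of Theorem~\ref{t:main}. The only divergence is in how (3)$\Rightarrow$(1) is closed. The paper argues that when $\ext(\emptyw)$ is connected, Corollary~\ref{c:connected} yields a group-preserving morphism $G_{1,1}\to G_{0,0}$, and since $G_{0,0}$ is a single vertex carrying loops labeled by the letters of $A$, every group of $G_{1,1}$ equals $F(A)$; Lemma~\ref{l:level1conjugacy} (which gives $K_{u,v}=vH_{b,\emptyw}v^{-1}$) then transfers this to the return groups. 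You instead invoke the constructive representative, i.e.\ the Stallings quotient of $G_{1,1}$, and your ``expectation'' about it is precisely what the proof of Lemma~\ref{l:level1groups} establishes: the vertices of $G_{1,1}/{\equiv_S}$ are the connected components of $\ext(\emptyw)$ and the labeling map on its $n$ edges is a bijection onto $A$, so for $c=1$ the folded graph is a rose with $n$ distinctly labeled loops and its group is all of $F(A)$. Both routes then rely on the same bridge (Lemma~\ref{l:level1conjugacy}) from the level-one Rauzy groups to the return groups, and your closing observation that $F(A)$ is its own only conjugate is valid (indeed the paper's stronger equality $K_{u,v}=vH_{b,\emptyw}v^{-1}$ makes even this step immediate). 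So there is no gap; the two arguments differ only in which piece of Section~\ref{s:main}'s machinery certifies that the $G_{1,1}$ groups are the full free group when $c=1$, and yours buys essentially the same conclusion at the same cost, provided you make explicit that the representative you use is a group of $G_{1,1}$ and cite the quotient computation rather than leaving it as an expectation.
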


The next corollary is a special case of our main result. It involves \emph{neutrality}, which is a combinatorial condition also introduced in \cite{Berthe2015} (we will recall the definition in Section~\ref{s:cor}). A \emph{connected set} is a language in which the extension graphs of non-empty words are connected, while a \emph{tree set} is a language in which the extension graph of the empty word is a forest, and all other extension graphs are trees. These conventions differ slightly from \cite{Berthe2015}, but are in line with other papers such as \cite{Berthe2017a,Dolce2017}. The term \emph{dendric} has also been used to refer to tree sets, for instance in \cite{Dolce2020}. A subset of the free group is called \emph{free} if it forms a basis of the subgroup it generates.
\begin{corollary}\label{c:main2}
    Let $L$ be a uniformly recurrent language on the alphabet $A$. If $L$ is connected and neutral, then the following conditions are equivalent:
    \begin{enumerate}
        \item Some return set of $L$ is a free subset of the free group on $A$.
        \item All return sets of $L$ are free subsets of the free group on $A$.
        \item $L$ is a tree set.
    \end{enumerate}
\end{corollary}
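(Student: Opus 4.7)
The plan is to extract Corollary~\ref{c:main2} from Theorem~\ref{t:main} by matching, within the class of connected neutral uniformly recurrent languages, the common rank predicted by that theorem with the actual cardinality of the return sets. Everything hinges on the standard fact that a finite subset $R$ of a free group is free if and only if $|R|$ equals the rank of $\langle R\rangle$.

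First I would check that connectedness of the ordinary extension graphs of non-empty words implies suffix-connectedness, which should be immediate from the definitions since the introduction advertises suffix-connectedness as a genuine weakening of the Berthé et al.\ condition. Theorem~\ref{t:main} then yields a common rank $r = n - c + 1$ for all the subgroups $\langle R_w(L)\rangle$, where $c$ is the number of connected components of the extension graph of the empty word. Next I would observe that within the class of connected neutral languages, the tree set condition is controlled entirely by the empty word: neutrality forces the quantity $e(w) - \ell(w) - r(w) + 1$ to vanish, so for any non-empty $w$ the connected bipartite graph $E(w)$ has $\ell(w) + r(w)$ vertices and $\ell(w) + r(w) - 1$ edges, hence is automatically a tree. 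Thus $L$ is a tree set precisely when $E(\emptyw)$ is a forest.

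The proof is then completed by a cardinality formula: under the standing hypotheses, $|R_w(L)|$ is constant in $w$, equal to $r$ when $E(\emptyw)$ is a forest and strictly larger otherwise. I expect this to drop out of the first-order Rauzy graph construction underlying Theorem~\ref{t:main}, in which the return sets are realised as labels of a free generating set of the fundamental group of a finite labelled graph whose first Betti number equals $r - 1$; any extra independent cycle in $E(\emptyw)$ should contribute a redundant generator and hence inflate $|R_w(L)|$ above $r$. With the formula in hand, (3) $\Rightarrow$ (2) is immediate from the basis criterion, (2) $\Rightarrow$ (1) is trivial, and (1) $\Rightarrow$ (3) follows because a single free return set forces $|R_w(L)| = r$, which in turn forces $E(\emptyw)$ to be a forest. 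The main obstacle is pinning down the cardinality formula cleanly from the Rauzy-graph setup; once that is done the rest is formal.
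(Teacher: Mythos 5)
Your overall skeleton matches the paper's: reduce freeness to the equality of cardinality and rank via the Hopfian property, note that connectedness plus neutrality makes every non-empty extension graph a tree (so the tree-set condition is equivalent to $\ext(\emptyw)$ being a forest, i.e.\ to $\chi(\ext(\emptyw))=c$), invoke Theorem~\ref{t:main} (after observing that connected implies suffix-connected) for the common rank $n-c+1$, and then compare with the common cardinality of the return sets. All of that is sound and is exactly how the paper argues.

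The genuine gap is the ``cardinality formula'' itself, which you leave as an expectation and for which your proposed justification does not work. The fact that $\Card(\ret_{u,v})$ is independent of $(u,v)$ and equals $\Card(A)-\chi(\ext(\emptyw))+1$ is not something that drops out of the first-order Rauzy graph construction of this paper; it is a separate combinatorial theorem about neutral languages, quoted here as Lemma~\ref{l:neutral} from Dolce and Perrin (Corollary~5.4 of \cite{Dolce2017}), whose proof uses neutrality essentially and has nothing to do with the Stallings/Rauzy-group machinery. Your heuristic --- that ``the return sets are realised as labels of a free generating set of the fundamental group of a finite labelled graph whose first Betti number equals $r-1$'' --- is moreover circular: asserting that the return set is a \emph{free} generating set of the Rauzy group is precisely the freeness statement you are trying to characterize, and nothing in the proof of Theorem~\ref{t:main} identifies $\ret_{u,v}$ with a basis read off a spanning tree; the theorem only shows $K_{u,v}=H_{u,v}$ as subgroups, leaving the cardinality of the generating set $\ret_{u,v}$ entirely unconstrained by the graph's Betti number (the substitution example of Section~\ref{s:example} has return sets of sizes $3$ and $4$ all generating the same rank-$3$ group). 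Once you replace this step by an appeal to Lemma~\ref{l:neutral}, your inequality $\Card(\ret_{u,v})=n-\chi(\ext(\emptyw))+1\geq n-c+1$, with equality exactly when $\ext(\emptyw)$ is a forest, is correct and the three equivalences follow as you describe.
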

Since connectedness implies suffix-connectedness, the assumptions of the Return Theorem place us in the scope of both Corollary~\ref{c:main1} and \ref{c:main2}. It follows that the Return Theorem is a direct consequence of the above corollaries.

In order to further motivate this new \emph{suffix-connectedness} condition, we give an example of a uniformly recurrent language which is suffix-connected but contains infinitely many disconnected elements. This language is defined by a primitive substitution. More precisely, we will show the following:
\begin{theorem}\label{t:example}
    The language of the primitive substitution
    \begin{equation*}
        \begin{array}{llll}
            \varphi\colon & 0 & \mapsto & 0001 \\
                     & 1 & \mapsto & 02 \\
                     & 2 & \mapsto & 001 
        \end{array}
    \end{equation*}
    is suffix-connected and contains infinitely many disconnected words.
\end{theorem}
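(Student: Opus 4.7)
The statement has two components—an infinite family of disconnected words and the suffix-connectedness of the language—which I would treat separately.

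For the disconnected family, my plan is to exhibit an explicit infinite sequence $(w_n)_n$ of bispecial factors of $L(\varphi)$ whose extension graphs are disconnected. I would begin by computing the extension graphs of the shortest bispecial factors of $L(\varphi)$ (using the first few iterates $\varphi^k(0)$ to generate enough data), hoping to find a disconnected bispecial word $w_0$. Once located, I would promote this to an infinite family using the substitutive structure: the natural candidates are $w_n = \varphi^n(w_0)$ with a suitable prefix/suffix adjustment, or a family of the form $\varphi^n(a)\, w_0\, \varphi^n(b)$ for well-chosen letters $a,b$. Showing each $w_n$ is bispecial and disconnected reduces to a direct calculation of its left and right extensions, leveraging the recognizability of primitive substitutions to transfer information between $w_n$ and $w_0$.

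For suffix-connectedness, the task is to verify that the suffix extension graph of every bispecial factor of $L(\varphi)$ is connected. Since $\varphi$ is primitive, the set of bispecial factors admits a recursive description: up to a finite ambiguity, every long enough bispecial factor arises as $\varphi(v)$ (possibly extended on both sides) for some shorter bispecial factor $v$. I would use this to classify the bispecial factors into finitely many "types" and check suffix-connectedness type by type, closing the argument by induction on depth. Here the key structural feature of $\varphi$ is that all three images $\varphi(a)$ begin with $0$, which forces common suffix behaviour across different left extensions and should create the extra connections in the suffix extension graph that are missing from the ordinary extension graph.

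The main obstacle will be carrying out the inductive step for suffix-connectedness. Even with a finite classification of bispecial factors via $\varphi$, one must track precisely how suffix extensions—rather than just single-letter left extensions—interact with the substitution, and this interaction is more delicate than for plain extension graphs. I expect that some preliminary lemmas describing how suffix extensions of $\varphi(v)$ relate to suffix extensions of $v$ will be needed, together with explicit base-case computations for bispecial factors of small length. Computer experimentation on $\varphi^k(0)$ for moderate $k$ would likely be indispensable both to pin down $w_0$ in the first part and to verify the base cases in the induction.
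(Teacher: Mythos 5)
Your plan for the disconnected half is essentially the paper's: it locates the shortest disconnected word $w_0=001000100010$ by explicit computation of the short factors, then propagates it through the substitution with a suffix adjustment, setting $w_{k+1}=\varphi(w_k)00$ or $\varphi(w_k)0$ according to the parity of $k$; disconnectedness of each $w_k$ follows from a stability lemma giving $\ext(x)\isom\ext(\varphi(x)0)$ or $\ext(\varphi(x)00)$ for bispecial $x$ beginning with $000$, which is exactly your ``prefix/suffix adjustment plus transfer'' step (the paper uses injectivity of $\varphi$ and cutting points rather than invoking recognizability, a cosmetic difference). As a plan, that half is sound.

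The gap is in the suffix-connectedness half. First, the only words needing any argument are the disconnected ones, since a connected word is suffix-connected at depth $1$; so either you prove that your family exhausts the disconnected words (which your first half does not attempt, as you only aim at producing infinitely many), or, as you propose, you treat all bispecial factors, in which case the connected types are dispatched trivially at depth $1$ and the entire difficulty again concentrates on the disconnected family. Second, and decisively, your sketch does not say how suffix-connectedness is actually transferred through $\varphi$, and that is where the real content lies: no bounded depth works for the $w_k$, and there is no straightforward ``induction on depth''. In the paper the depth must grow with the word, $d_k=|\varphi^k(001)|+1$, and the argument is not an induction on depth but a direct transfer from the base case: one checks by computation that the depth-$4$ suffix extension graph $\ext_{4,4}(000100010)$ of $w_0$ is connected, and then constructs an explicit graph morphism $\sigma_k\colon\ext_{d_0,d_0}(y_0)\to\ext_{d_k,d_k}(y_k)$, where $y_k=\tail^{d_k-1}(w_k)$, induced by $\varphi^k$ on left extensions and by $\varphi^k$ corrected by the word $x_k=\init(\varphi^k(2))$ on right extensions (one needs $\varphi^k(y_0)x_k=y_k$, that $x_k$ is a common prefix of $\varphi^k(v)$ for the relevant right extensions $v$, and length estimates showing $\sigma_k$ is well defined). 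This choice of depth and the $x_k$-bookkeeping are precisely the ``preliminary lemmas'' you defer, and they are the crux of the theorem; the observation that every $\varphi(a)$ begins with $0$ does not by itself produce them. A minor further point: you ask for connectivity of the whole suffix extension graph, which is stronger than the definition requires (only the natural embedding of $\lext(w)$ must lie in one component); here the stronger statement happens to hold, but what the morphism argument delivers, and all that is needed, is a path between the two embedded left extensions.
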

We will also see that, in the language of this substitution, the extension graph $\ext(\emptyw)$ is connected. Therefore, as a result of Corollary~\ref{c:main1}, all the return sets in this language generate the full free group of rank 3. However, further computations reveal that the language of $\varphi$ has return sets of cardinality 3 and 4, which means that some but not all of them are free subsets of the free group. 

This paper is structured as follows. In Section~\ref{s:connectedness}, we introduce suffix extension graphs and suffix-connectedness, while also recalling some relevant definitions in more details. Section~\ref{s:stallings} reviews some basic material about the groups generated by labeled digraphs. Section~\ref{s:rauzy} is devoted to Rauzy graphs. Section~\ref{s:paths} presents a technical result that makes up the core of the proof of our main result. In Section~\ref{s:return}, we examine the relationship between Rauzy graphs and return sets. In Section~\ref{s:main}, we put everything together and give the proof of Theorem~\ref{t:main}. Section~\ref{s:cor} discusses the proof of the two corollaries above. Finally, Section~\ref{s:example} is devoted to our suffix-connected example. 

\section{Suffix-connectedness}
\label{s:connectedness}

In this paper, $L$ denotes a language on a finite alphabet $A$ of cardinality $n$, and $F(A)$ denotes the free group on $A$. We will always suppose that $L$ is recurrent and that $A \subseteq L$. We recall that a language $L$ is \emph{recurrent} if it is closed under taking factors, and if for every two words $u, v\in L$, there exists a non-empty word $w$ such that $uwv\in L$. The \emph{left extensions} and \emph{right extensions} of order $k$ of $w\in L$, are:
\begin{equation*}
    \lext_k(w) = \{ u\in L\cap A^k : uw\in L\}, \qquad \rext_k(w) = \{ v\in L\cap A^k : wv\in L\}.
\end{equation*}

The \emph{extension graph of order $(k,l)$} of $w\in L$ is a bipartite graph over the disjoint union of $\lext_k(w)$ and $\rext_l(w)$ (the union of disjoint copies of $\lext_k(w)$ and $\rext_l(w)$). In this graph, there is an edge between $u\in\lext_k(w)$ and $v\in\rext_l(w)$ if $uwv\in L$. We denote this graph by $\ext_{k,l}(w)$. Note that all extension graphs are simple and undirected. We abbreviate $\rext_1$, $\lext_1$ and $\ext_{1,1}$ respectively by $\rext$, $\lext$ and $\ext$. In the absence of further clarifications, the term \emph{extension graph of $w$} refers to $\ext(w)$. A word is \emph{connected} if its extension graph is connected, and it is called \emph{disconnected} otherwise. A language is \emph{connected} if all its non-empty words are connected, and it is \emph{disconnected} otherwise.

A word $w\in L$ is called \emph{left special} if $\Card(\lext(w))>1$. Similarly, $w$ is called \emph{right special} if $\Card(\rext(w))>1$. By a \emph{bispecial word}, we mean a word which is both left and right special. 

\begin{remark}
    If $w$ is not bispecial, then $\ext(w)$ is a star graph, and in particular a tree. Hence, only bispecial factors can be disconnected.
\end{remark}

Given a word $w\in A^*$ and $0\leq i<|w|$, we denote by $w(i)$ the $i$-th letter of $w$. In particular, the first letter of $w$ is $w(0)$. Given $0\leq i\leq j\leq |w|$, we denote $w[i:j]$ the factor of $w$ defined by:
\begin{equation*}
    w[i:j] = w(i)w(i+1)\dots w(j-1)
\end{equation*}
Note that $|w[i:j]|=j-i$, $w[i:i]$ is the empty word and $w[0:|w|]=w$. Let $u\in A^*$ with $|u|=k$. We say that an index $j$ is an \emph{occurrence} of $u$ in $w$ if $w[j:j+k] = u$. We also define the \emph{tail} and the \emph{init} of a non-empty word $w$ by putting:
\begin{equation*}
    \tail(w) = w[1:|w|],\quad \init(w) = w[0:|w|-1].
\end{equation*}
We view $\tail$ and $\init$ as maps $A^+\to A^*$. With this, we are now ready to introduce suffix extension graphs.

\begin{definition}
    For $w\in L$ and $1\leq d\leq |w|+1$, the \emph{depth $d$ suffix extension graph} of $w$ is the extension graph $\ext_{d,d}(\tail^{d-1}(w))$. 
\end{definition}

The set $\lext(w)$ naturally embeds in the suffix-extension graphs of $w$. Indeed, let $u$ be the prefix of length $d-1$ of $w$, which means that $u$ satisfies $w=u\tail^{d-1}(w)$. Then $a\mapsto au$ is an injective map $\lext(w)\to \lext_d(\tail^{d-1}(w))$, with the latter set being viewed as a subset of $\ext_{d,d}(\tail^{d-1}(w))$. We call this the \emph{natural embedding} of $\lext(w)$ in the depth $d$ suffix extension graph.

\begin{definition}\label{d:suffix-connected}
    A word $w$ is called \emph{suffix-connected} if the natural embedding of $\lext(w)$ in $\ext_{d,d}(\tail^{d-1}(w))$ lies in one connected component, for some $1\leq d\leq |w|+1$. A language is called suffix-connected if all its non-empty words are suffix-connected.
\end{definition}

We note that this definition is sensitive to both increases and decreases in the depth parameter. That is, for a given word $w$, it may happen that some of the natural embeddings $\lext(w)$ lie in a single connected component, while others do not. The next example is a good illustration of this behaviour. It features a language defined by a primitive substitution, and such languages are well known to be uniformly recurrent (see for instance \cite[Proposition~1.2.3]{Fogg2002}). 
\begin{example}
    Let us consider the following binary substitution, known as the \emph{Thue-Morse} substitution:
    \begin{equation*}
        \begin{array}{rrrr}
            \mu\colon & 0 & \mapsto & 01\\
                 & 1 & \mapsto & 10
        \end{array}.
    \end{equation*}

    Let $L$ be the language defined by $\mu$. That is, $L$ is the set of factors of all words of the form $\mu^n(a)$ for $n\in\nn$ and $a\in\{0,1\}$. Figure~\ref{f:suff-ext-morse-1} gives all the suffix extension graphs of the word $010\in L$, which show that $L$ is not suffix-connected. 

    On the other hand, Figure~\ref{f:suff-ext-morse-2}, gives some extension graphs of $01100\in L$. These graphs show that the natural embeddings of a given word can alternate between being connected and disconnected as the depth increases.
\end{example}

\begin{figure}\centering
    \begin{minipage}[b]{.23\linewidth}\centering
        \includegraphics[scale=.4]{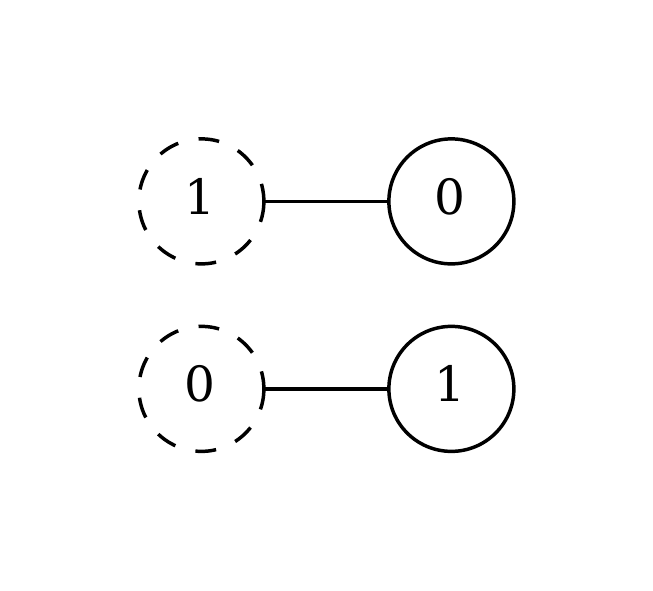}\\
        {\small$\ext_{1,1}(010)$}
    \end{minipage}
    \begin{minipage}[b]{.23\linewidth}\centering
        \includegraphics[scale=.3]{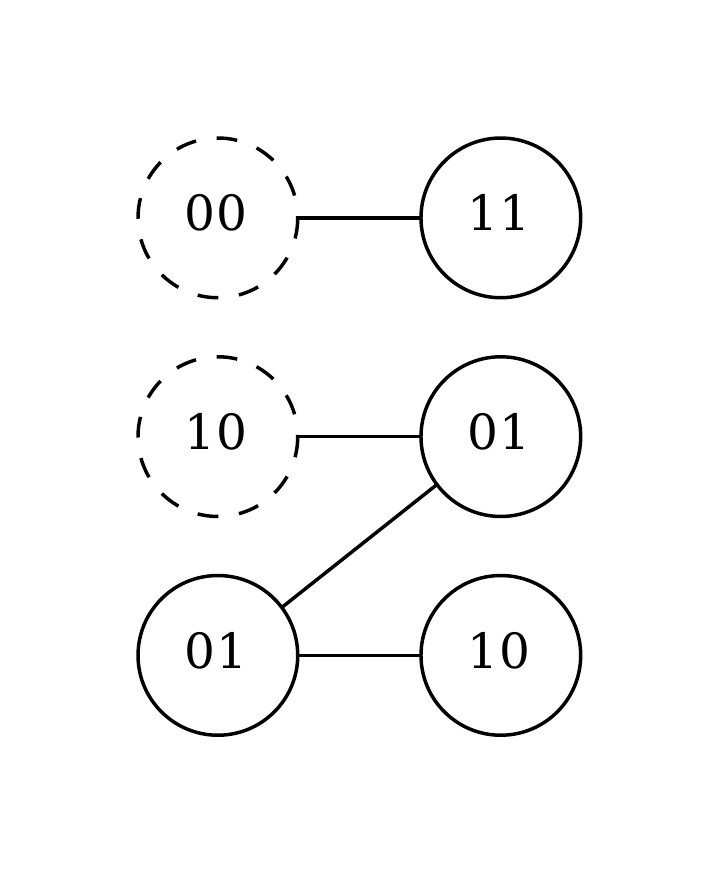}\\
        {\small$\ext_{2,2}(10)$}
    \end{minipage}
    \begin{minipage}[b]{.23\linewidth}\centering
        \includegraphics[scale=.25]{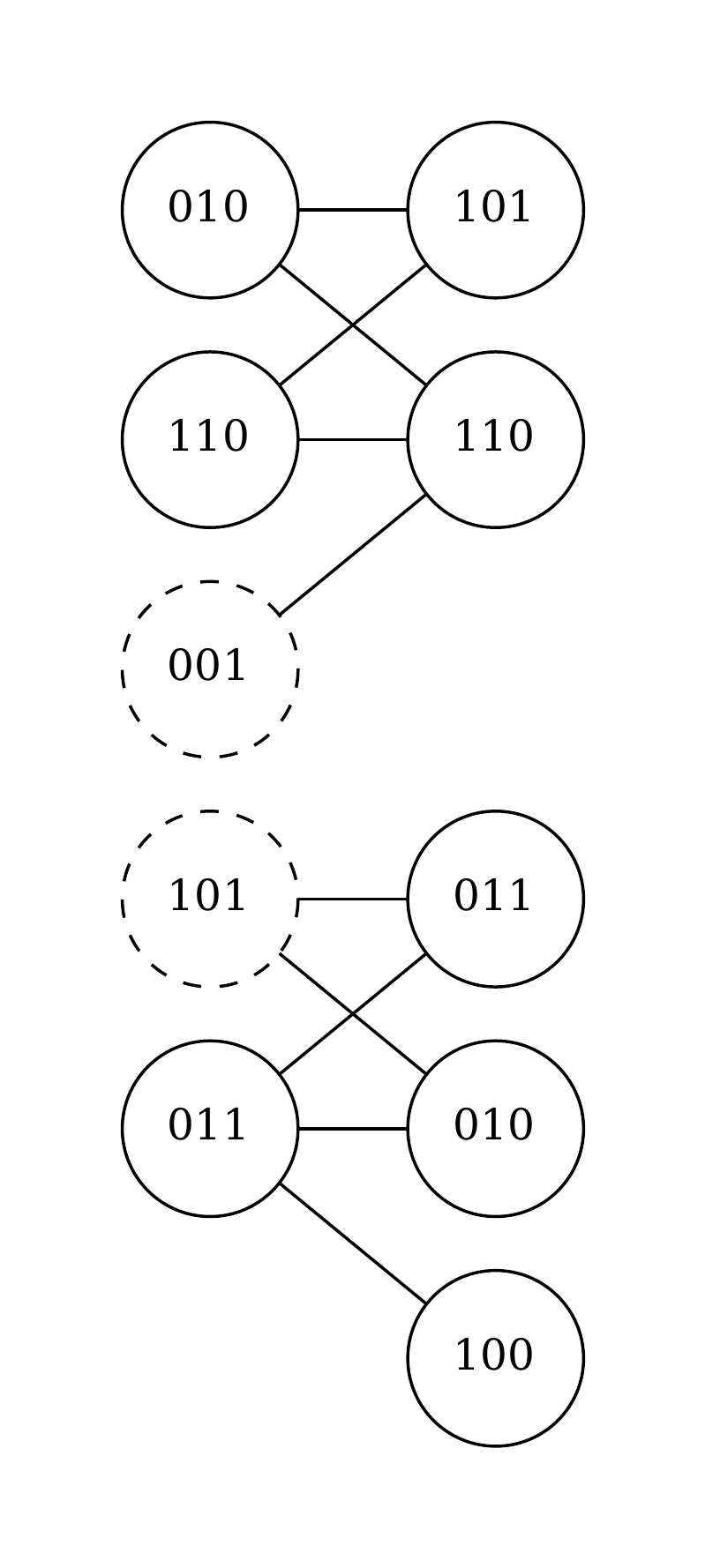}\\
        {\small$\ext_{3,3}(0)$}
    \end{minipage}
    \begin{minipage}[b]{.23\linewidth}\centering
        \includegraphics[scale=.2]{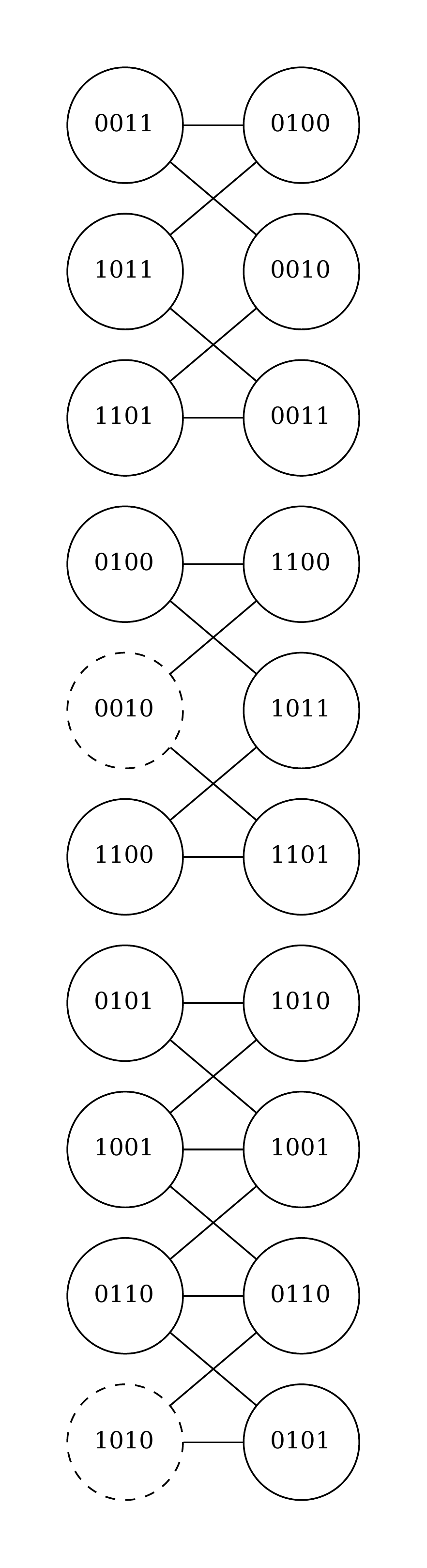}\\
        {\small$\ext_{4,4}(\emptyw)$}
    \end{minipage}
    \caption{Suffix extension graphs of the word $010$ in the language of the Thue-Morse substitution. The dashed vertices represent the natural embeddings of $\lext(010)$.}\label{f:suff-ext-morse-1}
\end{figure}

\begin{figure}\centering
    \begin{minipage}[b]{.23\linewidth}\centering
        \includegraphics[scale=.4]{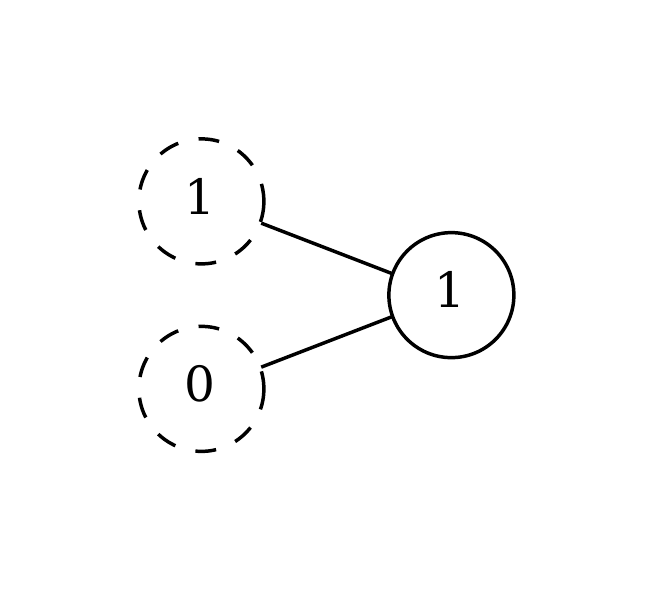}\\
        {\small$\ext_{1,1}(01100)$}
    \end{minipage}
    \begin{minipage}[b]{.23\linewidth}\centering
        \includegraphics[scale=.35]{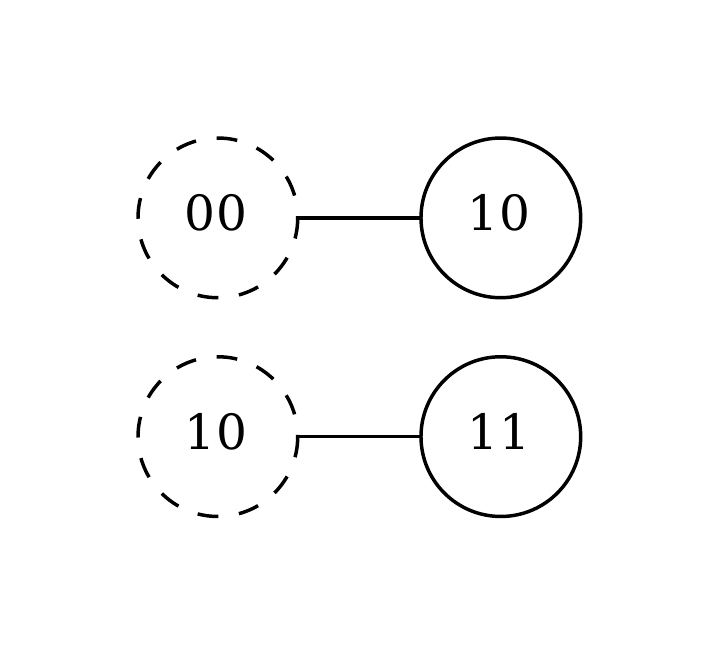}\\
        {\small$\ext_{2,2}(1100)$}
    \end{minipage}
    \begin{minipage}[b]{.23\linewidth}\centering
        \includegraphics[scale=.3]{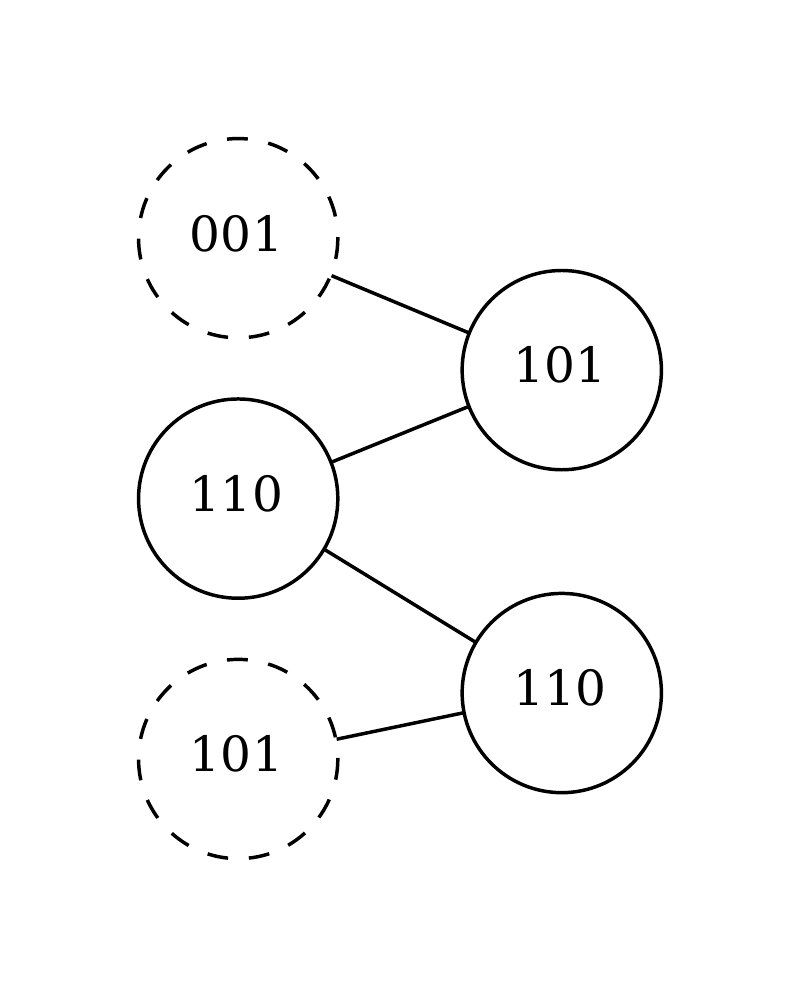}\\
        {\small$\ext_{3,3}(100)$}
    \end{minipage}
    \begin{minipage}[b]{.23\linewidth}\centering
        \includegraphics[scale=.25]{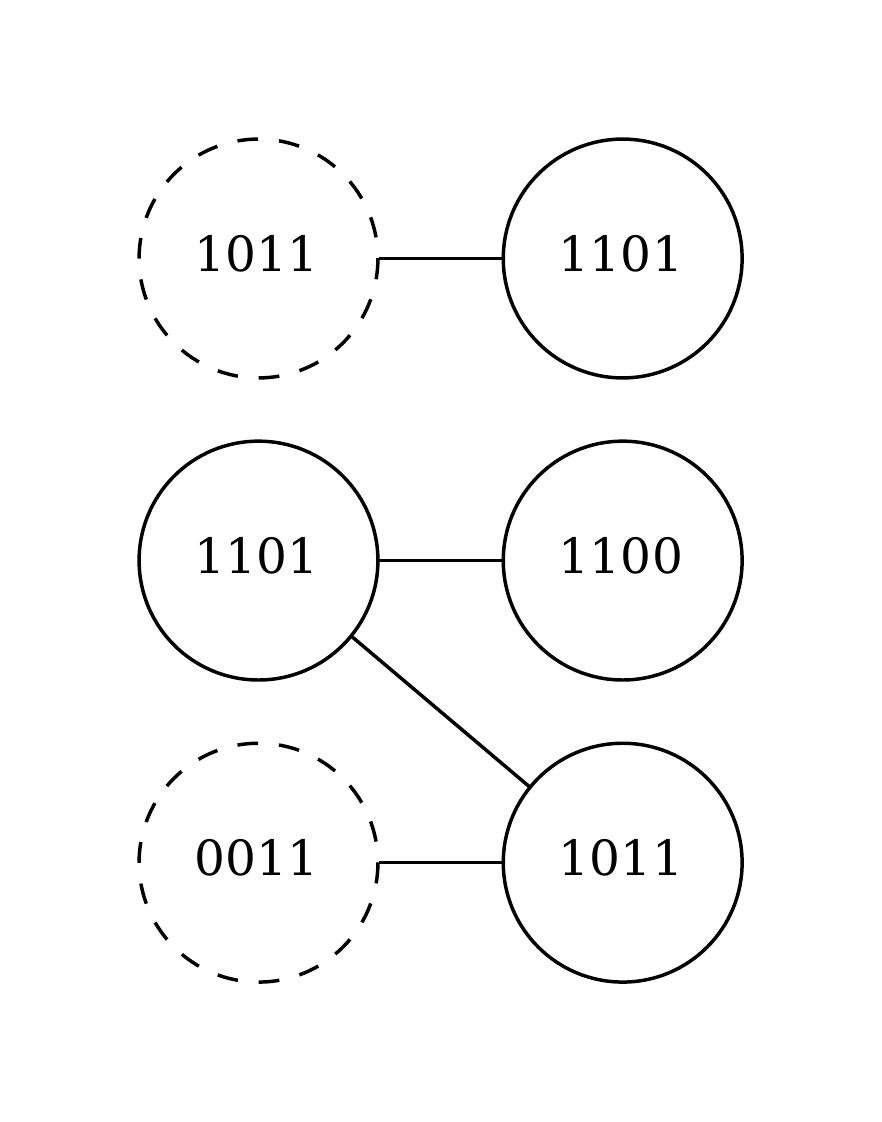}\\
        {\small$\ext_{4,4}(00)$}
    \end{minipage}
    \caption{First four suffix extension graphs of the word $01100$ in the language of the Thue-Morse substitution. The dashed vertices represent the natural embeddings of $\lext(01100)$.}\label{f:suff-ext-morse-2}
\end{figure}

Replacing $\tail$ by $\init$ and $\lext$ by $\rext$ yields the dual notions of \emph{prefix extension graphs} and \emph{prefix-connectedness}. Note that the depth~1 suffix and prefix extension graphs of $w$ both coincide with $\ext(w)$, so a connected word or language is both prefix and suffix-connected.

\section{Stallings equivalence}
\label{s:stallings}

Let us start this section by clarifying some basic terminology. A \emph{labeled digraph} over the alphabet $A$ (or, more simply, a \emph{digraph}) is a diagram of sets $G$ of the following form:
\begin{center}
    \begin{tikzcd}
       &\edge(G) \arrow[dr,shift left,"\alpha"] \arrow[dr,shift right,"\omega",swap] \arrow[dl,"\lambda",swap]& \\
        A& &\vertex(G)
    \end{tikzcd}
\end{center}
One can think of $\vertex$ as the set of \emph{vertices}, $\edge$ as the set of \emph{edges}, and $A$, the alphabet, as the set of \emph{labels}. The maps $\alpha$, $\omega$ and $\lambda$ give us respectively the origin, terminus and label of a given edge. For our purposes, we may assume that there are no redundant edges, meaning that $(\alpha,\lambda,\omega)$ are jointly injective. This means in effect that $\edge$ may be considered a subset of $\vertex\times A\times\vertex$ whenever convenient.

Given an edge $e = (x,a,y)$, we consider its formal inverse $e^{-1} = (y,a^{-1},x)$. From now on, we use the term \emph{edge} both for elements of $\edge(G)$ and for their formal inverses. Two edges are said to be \emph{consecutive} if the last component of the first is equal to the first component of the second. A \emph{path} is a sequence of consecutive edges. We can naturally extend the maps $\alpha,\omega$ to paths, and talk about \emph{consecutive paths}. Two consecutive paths can be composed, and any path can be inverted; we write respectively $pq$ and $p^{-1}$. A self-consecutive path is called a \emph{loop}. As expected, if $p,q$ are consecutive, then so are $q^{-1},p^{-1}$ and the relation $(pq)^{-1} = q^{-1}p^{-1}$ holds.

The labeling map $\lambda$ also naturally extends, mapping the set of all paths to the free group $F(A)$. This map satisfies $\lambda(pq)=\lambda(p)\lambda(q)$ and $\lambda(p^{-1})=\lambda(p)^{-1}$. We write $p\colon x\stackrel{u}{\to}y$ as a shorthand for $\alpha(p)=x$, $\omega(p)=y$, $\lambda(p)=u$. The set of all labels of loops over a given vertex $x$ forms a subgroup of $F(A)$, which we call the \emph{group of $G$ at $x$}. Note that under the assumption that $G$ is connected (any two vertices can be joined by a path), all the groups of $G$ lie in the same conjugacy class of subgroups of $F(A)$.

Let $\equiv$ be an equivalence relation on the vertices of a digraph $G$. Then $\equiv$ can also be seen as an equivalence relation on $\edge(G)$,
\begin{equation*}
    (x,a,y)\equiv(x',b,y') \iff x\equiv x', a=b, y\equiv y'.
\end{equation*}
The \emph{quotient digraph} $G/{\equiv}$ is then defined by:
\begin{equation*}
    \vertex(G/{\equiv})=\vertex(G)/{\equiv},\quad \edge(G/{\equiv})=\edge(G)/{\equiv},
\end{equation*}
together with the following adjancency and labeling maps:
\begin{equation*}
    \alpha(x/{\equiv}) = \alpha(x)/{\equiv},\quad \omega(x/{\equiv}) =  \omega(x)/{\equiv},\quad \lambda(x/{\equiv}) = \lambda(x).
\end{equation*}
The definition of $G/{\equiv}$ can be summarized by the following commutative diagrams:
\begin{center}
    \begin{tikzcd}[row sep=large]
        A & \lar[swap]{\lambda} \dar[swap]{\equiv} \edge(G) \\
          & \ular{\lambda} \edge(G/{\equiv})
    \end{tikzcd}
    \quad
    \begin{tikzcd}[row sep=large]
        \edge(G) \rar{\alpha} \dar[swap]{\equiv} & \vertex(G) \dar[swap]{\equiv} \\
        \edge(G/{\equiv}) \rar{\alpha} & \vertex(G/{\equiv})
    \end{tikzcd}
    \quad
    \begin{tikzcd}[row sep=large]
        \edge(G) \rar{\omega} \dar[swap]{\equiv} & \vertex(G) \arrow[swap,d,"\equiv"] \\
        \edge(G/{\equiv}) \rar{\omega} & \vertex(G/{\equiv})
    \end{tikzcd}
\end{center}
The natural projection $G\to G/{\equiv}$ is a \emph{digraph morphism}, meaning that it preserves the maps $\alpha,\omega,\lambda$. If, conversely, $\phi\colon G\to H$ is a digraph morphism, then the quotient $G/\ker(\phi)$ is isomorphic to $\img(\phi)$, where $\ker(\phi) = \{ (x,y) : \phi(x)=\phi(y)\}$. Note that for a digraph morphism $\phi\colon G\to G'$ to be onto, it needs to be onto on both $\vertex(G')$ and $\edge(G')$. The latter condition can be written as follows:
\begin{equation*}
    \forall (x,a,y)\in \edge(G'), \exists (x',a,y')\in\edge(G), \ \phi(x')=x \land \phi(y')=y.
\end{equation*}

We say that an equivalence relation $\equiv$ on $\vertex(G)$ is \emph{group-preserving} if the group of $G$ at $x$ is equal to the group of $G/{\equiv}$ at $x/{\equiv}$, for all $x\in\vertex(G)$. We also call \emph{group-preserving} a digraph morphism whose kernel is a group-preserving relation. Note that the group of $G$ at $x$ is always a subgroup of the group of $G/{\equiv}$ at $x/{\equiv}$. Therefore, to prove that $\equiv$ is group-preserving, one only needs to prove the reverse inclusion. Moreover, in the case of a connected digraph, this inclusion needs only to be checked on a single vertex. 

The family of group-preserving equivalence relations of a digraph $G$ also has the property of being closed under taking subrelations. Indeed, let us suppose that $\equiv_1$ is group-preserving and consider ${\equiv_2}\subseteq{\equiv_1}$. Then, the canonical surjection of $\equiv_1$ factors through that of $\equiv_2$, giving us the following commutative diagram:
\begin{center}
    \begin{tikzcd}
        G \dar[swap]{\equiv_2} \rar{\equiv_1} & G/{\equiv_1} \\
        G/{\equiv_2} \urar[swap,dashed] & 
    \end{tikzcd}
\end{center}
Let us fix $x\in\vertex(G)$ and let $H$, $H_1$, $H_2$ be respectively the group of $G$ at $x$; the group of $G/{\equiv_1}$ at $x/{\equiv_1}$; and the group of $G/{\equiv_2}$ at $x/{\equiv_2}$. Then the diagram above implies $H\leq H_2\leq H_1$, while the fact that $\equiv_1$ is group-preserving implies $H = H_1$. Thus, $H_2=H$ and $\equiv_2$ is also group-preserving.

A well-known algorithm due to Stallings implies that a digraph always has a greatest group-preserving equivalence relation. We now proceed to give a description of this equivalence relation, starting with the following definition. 
\begin{definition}
    The \emph{Stallings equivalence} of $G$ is the least equivalence relation on $\vertex(G)$ closed under the two following rules:
    \begin{itemize}
        \item[(F)] If $(x,y)$, $(u,x')$, $(y',v)$ are related, and $(x,a,x')$, $(y,a,y')$ are edges in $G$, then $(u,v)$ are related.
        \item[(F\textquotesingle)] If $(x,y)$, $(u,x')$, $(y',v)$ are related, and $(x',b,x)$, $(y',b,y)$ are edges in $G$, then $(u,v)$ are related.
    \end{itemize}
    We denote the Stallings equivalence by $\equiv_S$.
\end{definition}

Note that if two equivalence relations are closed under either rule (F) or (F\textquotesingle), then so is their intersection (this follows immediately from the definitions). Moreover, the total relation $\vertex(G)\times\vertex(G)$ is trivially closed under the two rules. Hence, the relation $\equiv_S$ is simply the intersection of all equivalence relations on $\vertex(G)$ that are closed under (F) and (F\textquotesingle).

\begin{figure}\centering
    \begin{minipage}[b]{0.4\linewidth}\centering
        \begin{tikzpicture}[bend angle=20,auto]
            \node (x) at (0,0) {$x$};
            \node (y) at ([shift=({0:1.3 cm})]x) {$y$};
            \node (v) at ([shift=({90:1.3 cm})]y) {$v$};
            \node (u) at ([shift=({180:1.3 cm})]v) {$u$};
            \node (x') at ([shift=({135:1.3 cm})]u) {$x'$};
            \node (y') at ([shift=({45:1.3 cm})]v) {$y'$};
            \draw (x) -- (y); 
            \draw[ultra thick] (x) -- (y); 
            \draw[ultra thick] (x') -- (u); 
            \draw[ultra thick] (y') -- (v); 
            \draw[dashed, thick] (v) -- (u); 
            \draw[->,>=latex,bend left] (x) to node {$a$} (x') ;
            \draw[->,>=latex,bend right] (y) to node[swap] {$a$} (y') ;
        \end{tikzpicture}\\
        {\small(F)}
    \end{minipage}
    \begin{minipage}[b]{0.4\linewidth}\centering
        \begin{tikzpicture}[bend angle=20,auto]
            \node (x) at (0,0) {$x$};
            \node (y) at ([shift=({0:1.3 cm})]x) {$y$};
            \node (v) at ([shift=({90:1.3 cm})]y) {$v$};
            \node (u) at ([shift=({180:1.3 cm})]v) {$u$};
            \node (x') at ([shift=({135:1.3 cm})]u) {$x'$};
            \node (y') at ([shift=({45:1.3 cm})]v) {$y'$};
            \draw[ultra thick] (x) -- (y); 
            \draw[ultra thick] (x') -- (u); 
            \draw[ultra thick] (y') -- (v); 
            \draw[dashed, thick] (v) -- (u); 
            \draw[->,>=latex,bend right] (x') to node[swap] {$b$} (x) ;
            \draw[->,>=latex,bend left] (y') to node {$b$} (y) ;
        \end{tikzpicture}\\
        {\small(F\textquotesingle)}
    \end{minipage}
    \caption{The rules defining Stallings equivalence. The arrows represent edges, the thick lines represent existing relations, and the dashed lines represent the relations deduced from each rule.}
\end{figure}
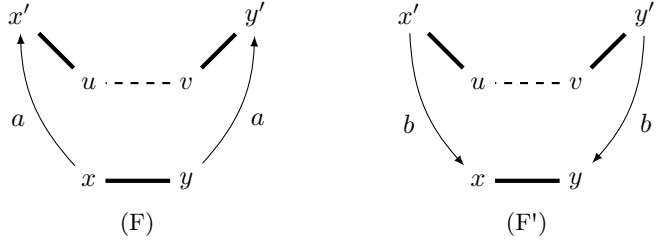

By a \emph{trivially-labeled path}, we mean a path whose label is the identity element of $F(A)$. The next result relates Stallings equivalence with trivially-labeled paths, and can be seen as a reformulation of Stallings algorithm. 
\begin{proposition}\label{p:stallings}
    Let $G$ be a connected digraph. The equivalence $\equiv_S$ is, alternatively,
    \begin{enumerate}
        \item the equivalence relation induced by trivially-labeled paths;
        \item the greatest group-preserving equivalence of $G$.
    \end{enumerate}
\end{proposition}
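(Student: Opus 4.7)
The strategy is to work through the intermediate relation $\sim_T$ defined by $x \sim_T y$ iff there is a trivially-labeled path from $x$ to $y$; statement~(1) is then the equality $\equiv_S = \sim_T$. Note that $\sim_T$ is indeed an equivalence relation: reflexivity comes from empty paths, symmetry from path inversion, and transitivity from concatenation.

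For the inclusion $\equiv_S \subseteq \sim_T$, it suffices to verify that $\sim_T$ is closed under rules (F) and (F$'$). Given related pairs $(x,y)$, $(u,x')$, $(y',v)$ witnessed by trivially-labeled paths $r\colon x\to y$, $q\colon u\to x'$, $s\colon y'\to v$ and edges $(x,a,x'), (y,a,y')\in\edge(G)$, the concatenation $q\cdot(x,a,x')^{-1}\cdot r\cdot(y,a,y')\cdot s$ is a path from $u$ to $v$ whose label equals $a^{-1}a=1$ in $F(A)$, so $u\sim_T v$. Rule~(F$'$) is handled by an analogous construction in which the factor $a^{-1}a$ is replaced by $bb^{-1}$.

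The reverse inclusion $\sim_T \subseteq \equiv_S$ is the crux of the proof. I would first establish that the quotient $G/{\equiv_S}$ is \emph{folded}: no two distinct edges share both source and label, and none share both target and label. For the source-label case, any two edges $(X,a,Y_1), (X,a,Y_2)$ of $G/{\equiv_S}$ lift to real edges $(X_i,a,Y_i')\in\edge(G)$ with $X_1\equiv_S X_2$; applying rule~(F) with $(x,y)=(X_1,X_2)$ and reflexive pairs $(Y_1',Y_1'), (Y_2',Y_2')$ yields $Y_1'\equiv_S Y_2'$, so $Y_1=Y_2$ in the quotient. The target-label case is symmetric via rule~(F$'$). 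A standard induction on path length then shows that in any folded digraph every trivially-labeled path is a loop: if the path $e_1\cdots e_n$ is non-trivial then, since $\lambda(e_1)\cdots\lambda(e_n)$ reduces to $1$ in $F(A)$, some consecutive pair of labels satisfies $\lambda(e_i)\lambda(e_{i+1})=1$; folding then forces $e_{i+1}=e_i^{-1}$, so this sub-path can be excised without changing the endpoints or the triviality of the label, and the inductive hypothesis applies. Projecting a trivially-labeled path $x\to y$ in $G$ to $G/{\equiv_S}$ thus yields $\pi(x)=\pi(y)$, i.e., $x\equiv_S y$.

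For part~(2), I would first verify that $\sim_T$ is group-preserving by lifting loops edge-by-edge: a loop at $x/{\sim_T}$ in the quotient lifts to a sequence of edges in $G$ whose consecutive endpoints are $\sim_T$-related, and the gaps can be bridged by trivially-labeled paths to produce a loop at $x$ in $G$ with the same label. To see that $\sim_T$ is the \emph{greatest} group-preserving equivalence, let $\equiv$ be any group-preserving equivalence with $x\equiv y$; by connectedness of $G$, take any path $p\colon x\to y$ of label $w$, whose projection is a loop at $x/{\equiv}$ in $G/{\equiv}$. Group-preservation supplies a loop $\ell\colon x\to x$ in $G$ with label $w$, and $\ell^{-1}p$ is then a trivially-labeled path from $x$ to $y$, whence $x\sim_T y$. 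The main obstacle is the folding step: ensuring that cancellations in the label of a projected path correspond to genuine back-tracks along single edges, which is exactly what folding guarantees.
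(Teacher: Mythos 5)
Your proposal is correct, and parts of it coincide with the paper: the inclusion $\equiv_S\subseteq\sim_T$ via closure of $\sim_T$ under (F) and (F\textquotesingle), and all of part~(2) (lifting quotient loops edge-by-edge, bridging gaps with trivially-labeled paths, and for maximality composing a path $p\colon x\to y$ with an inverse loop supplied by group-preservation) are essentially the paper's arguments. Where you genuinely diverge is the crucial inclusion $\sim_T\subseteq\equiv_S$. The paper stays upstairs in $G$: it runs an induction on the length of a trivially-labeled $\equiv_S$-path, locates a cancellation $a^{-1}a$ among consecutive labels, and applies (F) or (F\textquotesingle) to splice it out, thereby identifying the endpoints directly (it in fact proves the slightly stronger statement for $\equiv_S$-paths, which is convenient but not strictly needed later). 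You instead pass to the quotient: one application of each rule with reflexive pairs shows $G/{\equiv_S}$ is folded, and the standard fact that in a folded graph a trivially-labeled path must backtrack completely then yields $\pi(x)=\pi(y)$, i.e.\ $x\equiv_S y$; your verification that folding forces $e_{i+1}=e_i^{-1}$ is the right point to check and is correct. The trade-off: your route makes the link with Stallings foldings explicit (the quotient by $\equiv_S$ \emph{is} the folded graph) and isolates a reusable lemma about folded digraphs, while the paper's route avoids introducing foldedness at all and argues purely from the defining closure rules, which keeps the proof self-contained within its own formalism. Both are complete proofs of the proposition.
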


For the proof of this result, the following definition will be useful: given an equivalence relation $\equiv$ on $\vertex(G)$, an \emph{$\equiv$-path} in $G$ is a sequence of edges $p = (e_1, \dots, e_k)$ satisfying $\alpha(e_{i+1})\equiv\omega(e_i)$. The notions of \emph{label} and \emph{length} extend in a straightforward way to $\equiv$-paths. We also use the notation $p\colon x\stackrel{u}{\to} y$ for $\equiv$-paths, to mean $\alpha(p)=x$, $\omega(p)=y$ and $\lambda(p)=u$. Finally, we adopt the convention that an $\equiv$-path of length 0 is a pair $x\equiv y$. 

\begin{proof}[Proof of Proposition~\ref{p:stallings}]
    \textbf{(1)} Let us denote by $\sim$ the relation induced by trivially-labeled paths and by $\approx$ the relation induced by trivially-labeled $\equiv_S$-paths. Clearly $\sim$ is contained in $\approx$. Let us show that $\approx$ is contained in  $\equiv_S$.

    We proceed by induction on the length of the trivially-labeled $\equiv_S$-path. Note that by definition, an $\equiv_S$-path of length 0 is nothing but a pair $x\equiv y$, so the basis of the induction is trivial. Let us suppose that there is a trivially-labeled $\equiv_S$-path $p\colon x\to y$ of length $k\geq 1$. Write $p = (e_1, \dots, e_k)$. Since $p$ is trivially-labeled, $k$ is even and there must exist $i$ such that $\lambda(e_i) = a^{-1}$ and $\lambda(e_{i+1})=a$, where $a$ is either a letter, or the inverse of a letter. Write $e_i = (u',a^{-1},u)$ and $e_{i+1}=(v,a,v')$, where $u\equiv_S v$. If $a\in A$, then we may use rule (F) to conclude $u'\equiv_S v'$. Otherwise, one uses rule (F\textquotesingle) to obtain the same conclusion. It follows that $p' = (e_1, \dots, e_{i-1}, e_{i+2}, \dots, e_k)$ is also a trivially-labeled $\equiv_S$-path between $x$ and $y$. Since $p'$ has length $k-2<k$, we conclude by induction that $x\equiv_Sy$. 

    We finish the proof of (1) by showing that $\equiv_S$ is contained in $\sim$. By definition of $\equiv_S$, it suffices to show that $\sim$ is closed under the rules (F) and (F\textquotesingle). Suppose that $u\sim x'$, $x\sim y$, $y'\sim v$, and that there are two edges $e=(x,a,x')$ and $f=(y,a,y')$. Consider trivially-labeled paths $p_1\colon u\to x'$, $q\colon x\to y$ and $p_2\colon y'\to v$. Then, the composition $p_1e^{-1}qfp_2$ is a trivially-labeled path in $G$ between $u$ and $v$. Thus, $u\sim v$, which proves $\sim$ is closed under (F). The proof for (F\textquotesingle) is similar.

    \textbf{(2)} We first show that $\equiv_S$ is a group-preserving equivalence relation, and then we show it is the greatest. Let us fix any path $p\colon x/{\equiv_S}\to y/{\equiv_S}$ in the quotient $G/{\equiv_S}$. We say that an $\equiv_S$-path $q$ in $G$ \emph{lifts $p$} if $q\colon x'\to y'$ with $x\equiv_S x'$, $y\equiv_S y'$ and $\lambda(p) = \lambda(q)$. Note that any path in the quotient $G/{\equiv_S}$ admits such a lift in $G$. If $q=(e_0,\dots, e_k)$ lifts $p$, we put $D(q) = \{ 0\leq i < k: \omega(e_i) \neq \alpha(e_{i+1})\}$. Clearly, $q$ is a path if and only if $D(q)$ is empty. Assume $j=\max(D(q))$, and consider a trivially-labeled path $r\colon \omega(e_j)\to \alpha(e_{j+1})$ in $G$, which we know exists by Part~(1). Let $q=q_1q_2$ be the factorization of $q$ where $|q_1|=j+1$. Then, $q'=q_1rq_2$ is an $\equiv_S$-path between $x'$ and $y'$ satisfying $\lambda(q')=\lambda(q)$ and $D(q')=D(q)\setminus\{j\}$. Thus, we may assume that $q$ is a path. Composing on both ends with trivially-labeled paths $x\to x'$ and $y'\to y$, we get a lift of $p$ which is a path between $x$ and $y$ in $G$. This result applied to loops shows that $\equiv_S$ is group-preserving. 

    Finally, let us suppose that $\equiv$ is another group-preserving congruence, and let $x\equiv y$. Choose any path $p\colon x\to y$. Then $p/{\equiv}$ is a loop over $y/{\equiv}$ in $G/{\equiv}$. Since $\equiv$ is group-preserving, there is a loop $q$ over $y$ with $\lambda(q)=\lambda(p/{\equiv})=\lambda(p)$. It follows that $pq^{-1}$ is a trivially-labeled path between $x$ and $y$, so $x\equiv_Sy$. 
\end{proof}

From now on, we will use the three equivalent descriptions of $\equiv_S$ interchangeably.

\section{Rauzy graphs}
\label{s:rauzy}

Recall that we defined the two maps $\init$ and $\tail$ by $\init(x) = x[0:|x|-1]$ and $\tail(x) = x[1:|x|]$. For $k\in\nn$, let us also define the map $\eval_k$ by $\eval_k(x) = x(k)$. Note that $\init$ and $\tail$ are defined on $A^+$, while $\eval_k$ is defined on $A^{>k}$.

\begin{definition}
    Let $L$ be a recurrent language on $A$ and $m, k\in\nn$ with $k\leq m$. The \emph{$k$-labeled Rauzy graph} of level $m$ of $L$ is the digraph $G_{m,k}$ defined by the diagram:
    \begin{center}
        \begin{tikzcd}
            & L\cap A^{m+1} \arrow[dl,"\eval_k",swap] \arrow[dr,shift left,"\init"] \arrow[dr,shift right,swap,"\tail"]& \\
            A & & L\cap A^m
        \end{tikzcd}
    \end{center}
\end{definition}

Special cases of these labeled Rauzy graphs have appeared in the litterature, including in \cite{Berthe2015} with $k=m$, and in \cite{Almeida2016} with $m=2k$. 

The maps $\init$, $\tail$ and $\eval_k$ used to define the Rauzy graphs are jointly injective, and moreover the following diagrams commute:
\begin{center}
    \begin{tikzcd}
        A^{\geq 2} \rar{\init} \dar{\tail} & A^+\dar{\tail}\\
        A^+ \rar{\init}& A^*
    \end{tikzcd}\qquad
    \begin{tikzcd}
        A^{>k+1} \rar{\init}\drar[swap]{\eval_k}& A^{>k}\dar{\eval_k}\\
         & A
    \end{tikzcd}\qquad
    \begin{tikzcd}
        A^{>k+1} \rar{\tail}\drar[swap]{\eval_{k+1}}& A^{>k}\dar{\eval_k}\\
         & A
    \end{tikzcd}
\end{center}

Therefore, $\init$ and $\tail$ also define onto digraph morphisms for $m\geq1$:
\begin{align*}
    \init\colon& G_{m,k}\to G_{m-1,k} & (0\leq k\leq m-1)\\
    \tail\colon& G_{m,k}\to G_{m-1,k-1} & (1\leq k\leq m).
\end{align*}

These morphisms will allow us to relate the groups defined the Rauzy graphs. In the next definition, we introduce a convenient notation for these groups.

\begin{definition}
    Let $(u,v)$ be such that $uv\in L$, $|u|=k$ and $|u|+|v|=m$. We denote by $H_{u,v}$ the group of $G_{m,k}$ at $uv$. We call $H_{u,v}$ a \emph{Rauzy group} of $L$. 
\end{definition}

The fact that $\tail$ and $\init$ define digraph morphisms immediately implies that: 
\begin{equation*}
    H_{u,v}\leq H_{\tail(u),v}, \quad H_{u,v}\leq H_{u,\init(v)}.
\end{equation*}
We further note that $H_{ua,v} = a^{-1}H_{u,av}a$. Since we are assuming that $L$ is recurrent, the Rauzy graphs are connected and it follows that $H_{u,v}$ and $H_{u',v'}$ lie in the same conjugacy class whenever $|uv| = |u'v'|$.

\section{Paths in suffix extension graphs}
\label{s:paths}

For this section, it is useful to introduce a local version of suffix-connectedness. We do this in the next definition.
\begin{definition}
    Let $m,e\in\nn$ with $1\leq e\leq m+1$. We say that $L$ is \emph{$(m,e)$-suffix-connected} if for every $w\in L\cap A^m$, there exists $1\leq d\leq e$ such that the natural embedding of $\lext(w)$ in $\ext_{d,d}(\tail^{d-1}(w))$ lies in a single connected component.
\end{definition}

\begin{remark}\label{r:local}
    This local version of suffix-connectedness has the following feature: suppose that $1\leq e\leq e'\leq m+1$ and that $L$ is $(m,e)$-suffix-connected; then $L$ is also $(m,e')$-suffix-connected. In particular, if we suppose that $L$ is suffix-connected, then it must be $(m,m+1)$-suffix-connected for all $m\geq 1$.
\end{remark}

The main result of this section is the following proposition, which is the main ingredient in the proof of Theorem~\ref{t:main}:
\begin{proposition}\label{p:paths}
    Assume that $L$ is recurrent and $(m-1,e)$-suffix-connected, where $m\geq 1$ and $1\leq e\leq m$. Then, $\ker(\tail)$ is a group-preserving equivalence relation of $G_{m,k}$ whenever $e\leq k\leq m$. 
\end{proposition}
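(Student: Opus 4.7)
\emph{Proof plan.} My plan is to invoke Proposition~\ref{p:stallings}(2) together with the fact that the family of group-preserving equivalences is closed under subrelations: it will suffice to show that $\ker(\tail) \subseteq {\equiv_S}$ in $G_{m,k}$. Concretely, given $u,v \in L\cap A^m$ with $\tail(u) = \tail(v) = w$, one has $u = aw$ and $v = bw$ for some letters $a,b \in A$, and I will build a trivially-labeled path from $u$ to $v$ in $G_{m,k}$; by Proposition~\ref{p:stallings}(1), this will yield $u \equiv_S v$.

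The first step is to apply the $(m-1,e)$-suffix-connectedness hypothesis to $w$, producing $1\leq d\leq e$ such that the natural embedding of $\lext(w)$ into $\ext_{d,d}(s)$, with $s = \tail^{d-1}(w)$, lies in a single connected component. Writing $w = p s$ with $|p| = d-1$, the natural embedding sends $a\mapsto ap$ and $b\mapsto bp$, which yields an alternating path
\begin{equation*}
    ap = x_0,\, y_0,\, x_1,\, y_1,\, \ldots,\, y_{n-1},\, x_n = bp
\end{equation*}
in $\ext_{d,d}(s)$ with $x_i s y_i,\, x_{i+1} s y_i \in L$ for every $i$.

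The core step, carried out for each $i$ separately, will be to convert the $i$-th bipartite edge into a trivially-labeled path in $G_{m,k}$ from $x_i s$ to $x_{i+1} s$. Given the word $z = x_i s y_i \in L$ of length $m + d$, walking through its consecutive length-$(m+1)$ factors $z[j:j+m+1]$ for $0\leq j\leq d-1$ produces a path $p_i^+$ in $G_{m,k}$ from $x_i s = z[0:m]$ to $s y_i = z[d:d+m]$; a short computation will show that its label equals $z[k:k+d]$. Building the analogous path $p_i'^{+}$ from $x_{i+1} s$ to $sy_i$ along $x_{i+1} s y_i$ and composing $p_i^+ (p_i'^{+})^{-1}$ will then yield a path from $x_i s$ to $x_{i+1} s$ in $G_{m,k}$, and concatenating these $n$ pieces will give the required path from $u = x_0 s$ to $v = x_n s$.

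The main obstacle, and the place where the bound $e \leq k$ is crucial, is the verification that this composite path is actually trivially labeled. The key observation will be that when $k \geq d$ the factor $z[k:k+d]$ lies entirely inside $s y_i$ and is therefore determined by $s y_i$ alone; in particular, it is unchanged when $x_i$ is replaced by $x_{i+1}$. Hence $p_i^+$ and $p_i'^{+}$ will carry the same label in $F(A)$, and the two cancel in the composition. This is precisely what motivates the local version of suffix-connectedness: the smallest admissible depth $d$ produced by the hypothesis controls the smallest value of $k$ for which $\ker(\tail)$ can be shown group-preserving, which explains the asymmetric roles of $e$ and $k$ in the statement.
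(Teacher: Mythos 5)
Your proposal is correct and follows essentially the same route as the paper: reduce to showing $\ker(\tail)\subseteq{\equiv_S}$ via trivially-labeled paths, lift each edge of the bipartite path given by suffix-connectedness to a pair of Rauzy-graph paths, and use $k\geq e\geq d$ to see that their labels (which lie inside $s y_i$) cancel. The only cosmetic difference is that the paper packages your ``sliding window'' construction of the path with label $z[k:k+d]$ as a separate lemma (Lemma~\ref{l:rauzy1}) proved by induction on $d$.
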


The proof relies on the following lemma:
\begin{lemma}\label{l:rauzy1}
    Let $L$ be a recurrent language on $A$, $m\in\nn$, $0\leq k\leq m$, $d\geq 1$ and $x\in L\cap A^{m+d}$. Then there exists a path $p_x$ in $G_{m,k}$ such that:
    \begin{center}
        \begin{tikzcd}[column sep=large]
            p_x\colon\init^d(x)\rar{x[k:k+d]}&\tail^d(x)
        \end{tikzcd}
    \end{center}
\end{lemma}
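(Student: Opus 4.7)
The plan is to use a direct construction rather than induction: the path $p_x$ will simply be the sequence of edges corresponding to the $d$ consecutive factors of length $m+1$ appearing inside $x$. This is the most economical approach because the required origin $\init^d(x)$, terminus $\tail^d(x)$, and label $x[k:k+d]$ all decompose naturally letter-by-letter along $x$.

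Concretely, for each $0 \leq i \leq d-1$, I would let $e_i = x[i:i+m+1]$. Since $x \in L$ has length $m+d$ and $L$ is closed under factors (being recurrent), each such $e_i$ lies in $L \cap A^{m+1} = \edge(G_{m,k})$. Then I would compute from the definitions of $\init$, $\tail$, and $\eval_k$ that
\begin{equation*}
    \alpha(e_i) = x[i:i+m],\quad \omega(e_i) = x[i+1:i+m+1],\quad \lambda(e_i) = x(i+k) = x[i+k : i+k+1].
\end{equation*}
In particular, $\omega(e_i) = x[i+1:i+m+1] = \alpha(e_{i+1})$, so the sequence $(e_0, e_1, \dots, e_{d-1})$ is consecutive and defines a genuine path $p_x$ in $G_{m,k}$.

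It then remains only to verify that $p_x$ has the required data. The origin is $\alpha(e_0) = x[0:m] = \init^d(x)$, the terminus is $\omega(e_{d-1}) = x[d:m+d] = \tail^d(x)$, and the label is the concatenation
\begin{equation*}
    \lambda(p_x) = x(k)\, x(k+1) \cdots x(k+d-1) = x[k:k+d],
\end{equation*}
as required. There is no real obstacle to the argument; the only subtlety is making sure the indices line up correctly, and noting explicitly that recurrence (closure under factors) is precisely what guarantees each intermediate word $x[i:i+m+1]$ is in $L$, hence an edge of $G_{m,k}$. An inductive proof on $d$ using the commutation $\init \circ \tail = \tail \circ \init$ would work equally well, but the explicit construction above makes the lemma essentially a verification.
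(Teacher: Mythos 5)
Your construction is correct and produces exactly the path the paper builds: the sequence of consecutive length-$(m+1)$ factors $x[i:i+m+1]$ of $x$, with the same endpoint and label bookkeeping. The only difference is presentational — the paper assembles this path by induction on $d$ (appending the final edge $\tail^{d-1}(x)$ to the inductively obtained path for $\init(x)$, using that $\init$ and $\tail$ commute), whereas you write it out directly — so the argument is essentially the same.
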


\begin{proof}
    We proceed by induction on $d$. If $d=1$, then $x$ itself is an edge in $G_{m,k}$ providing the required path. 

    For the induction step, we assume that $d>1$. Let $x' = \init(x)$ and $x'' = \tail^{d-1}(x)$. Note that $|x''|=|x|-d+1 = m+1$, so $x''$ is an edge in $G_{m,k}$, which we see as a path of length~1. Moreover, the induction hypothesis gives us a path $p'$ such that
    \begin{center}
        \begin{tikzcd}[column sep = huge]
            p'\colon\init^{d-1}(x')\rar{x'[k:k+d-1]}&\tail^{d-1}(x')
        \end{tikzcd}
    \end{center}
    Recalling that $\init$ and $\tail$ commute, we find that:
    \begin{equation*}
        \tail^{d-1}(x') = \tail^{d-1}\circ\init(x) = \init\circ\tail^{d-1}(x) = \init(x'').
    \end{equation*}
    Hence, $p'$ and $x''$ are consecutive, and we may form the composition $p=p'x''$. Note that $p$ is a path between $\init^d(x)$ and $\tail^d(x)$, as required. Moreover, the label of this path is given by:
    \begin{equation*}
        x'[k:k+d-1]x''(k) = x[k:k+d-1]x(k+d-1) = x[k:k+d],
    \end{equation*}
    and this concludes the proof.
\end{proof}

We are now ready to prove the proposition above.
\begin{proof}[Proof of Proposition~\ref{p:paths}]
    Let us fix a pair of vertices identified by the digraph morphism $\tail:G_{m,k}\to G_{m-1,k-1}$, that is to say two words $ax, bx\in L\cap A^{m}$ where $x\in L\cap A^{m-1}$ and $a,b\in A$. We want to show $ax\equiv_Sbx$, which amounts to find a trivially-labeled path in $G_{m,k}$ between $ax$ and $bx$. 
    
    By assumption, there exists $d\leq e$ such that the natural embedding of $\lext(x)$ in $\ext_{d,d}(\tail^{d-1}(x))$ lies in one connected component. Let us write $y = \tail^{d-1}(x)$, and let $u$, $v$ be the natural embeddings of $a,b\in\lext(x)$ inside $\ext_{d,d}(y)$. In other words, $u$ and $v$ satisfy $ax=uy$ and $bx=vy$. Let us consider a path in $\ext_{d,d}(y)$ joining $u$ and $v$. Since $\ext_{d,d}(y)$ is bipartite, this path must have the following form:
    \begin{equation*}
        u=s_0, t_0, s_1, t_1, \dots, t_{j-1}, s_j = v,
    \end{equation*}
    where $s_i\in\lext_d(y)$, $t_i\in\rext_d(y)$. The fact that this forms a path in $\ext_{d,d}(y)$ means that, for each $0\leq i<j$, we have:
    \begin{equation*}
        s_iyt_i, s_{i+1}yt_i\in L.
    \end{equation*}
    Let us put $w_i = s_iyt_i$ and $z_i = s_{i+1}yt_i$. By Lemma \ref{l:rauzy1}, there exist paths:
    \begin{center}
        \begin{tikzcd}[column sep = large]
            p_i\colon\init^{d}(w_i)\rar{w_i[k:k+d]}&\tail^{d}(w_i)
        \end{tikzcd}\qquad
        \begin{tikzcd}[column sep = large]
            q_i\colon\init^{d}(z_i)\rar{z_i[k:k+d]}&\tail^{d}(z_i)
        \end{tikzcd}
    \end{center}
    We notice that $\init^d(w_i) = s_iy$, $\tail^d(w_i) = yt_i = \tail^d(z_i)$, $\init^d(z_i) = s_{i+1}y$. Therefore, $p_i, q_i^{-1}$ are consecutive and their composition is a path $s_iy\to s_{i+1}y$. Moreover, since $k\geq e\geq d$, it follows that 
    \begin{equation*}
        w_i[k:k+d] = (yt_i)[k-d:k] = z_i[k:k+d].
    \end{equation*}
    Therefore, $p_iq_i^{-1}$ is trivially-labeled. Composing these paths for $i=0,\dots, j-1$ gives us a trivially-labeled path between $ax=uy=s_0y$ and $s_jy=vy=bx$.
\end{proof}

\begin{figure}\centering
    \begin{minipage}[b]{.25\linewidth}\centering
        \includegraphics[scale=.3]{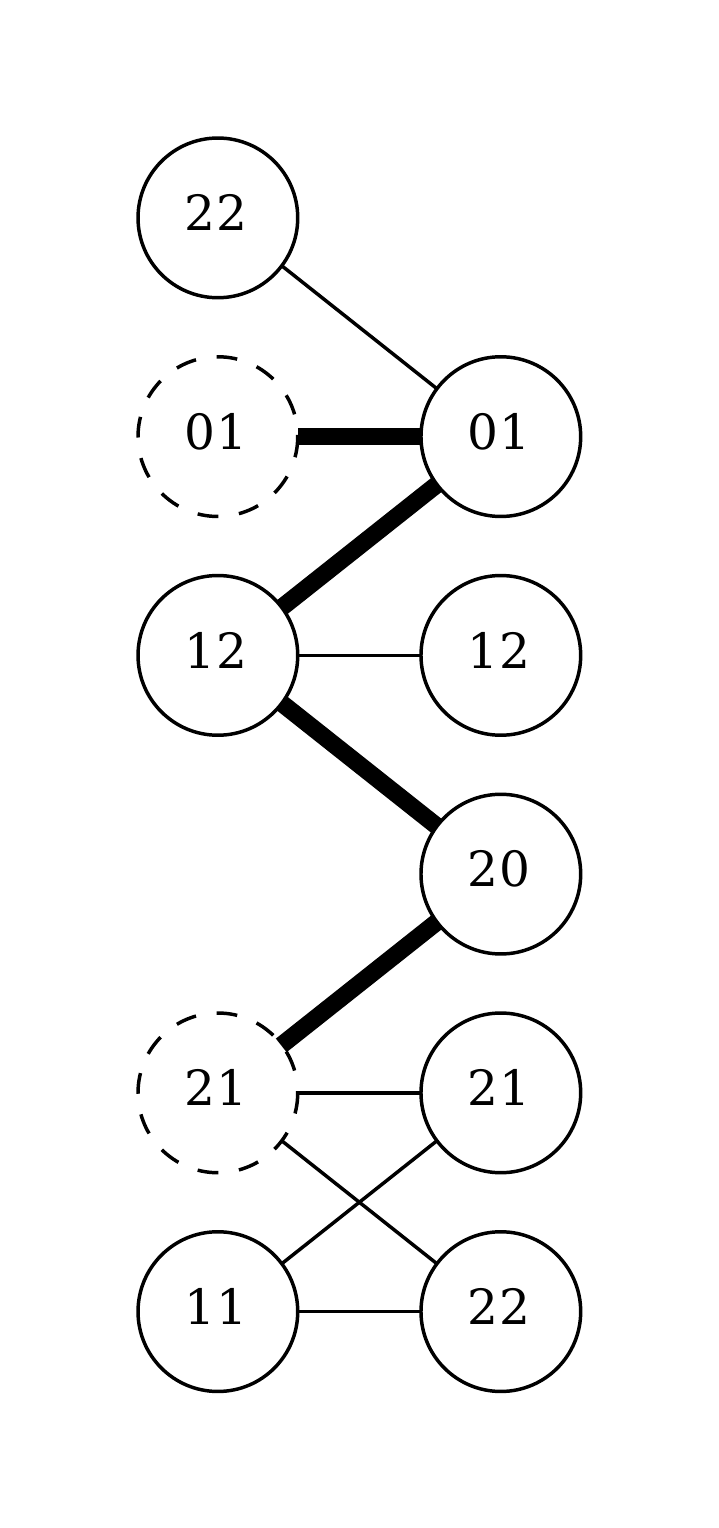}\\
        {\small$\ext_{2,2}(\tail(12))$}
    \end{minipage}
    \begin{minipage}[b]{.7\linewidth}\centering
        \scalebox{.7}{
            \begin{tikzpicture}
                \tikzstyle{every node}= [fill=white]%
                \node (101) at (63.0bp,219.0bp) [draw] {$101$};
                \node (011) at (145.0bp,219.0bp) [draw] {$011$};
                \node (122) at (309.0bp,219.0bp) [draw] {$122$};
                \node (112) at (227.0bp,219.0bp) [draw] {$112$};
                \node (012) at (63.0bp,152.0bp) [draw,dashed,thick] {$012$};
                \node (212) at (275.0bp,152.0bp) [draw,dashed,thick] {$212$};
                \node (010) at (104.0bp,152.0bp) [draw] {$010$};
                \node (221) at (193.0bp,152.0bp) [draw] {$221$};
                \node (222) at (348.0bp,152.0bp) [draw] {$222$};
                \node (220) at (309.0bp,85.0bp) [draw] {$220$};
                \node (120) at (63.0bp,85.0bp) [draw] {$120$};
                \node (201) at (145.0bp,85.0bp) [draw] {$201$};
                \node [left = 1mm of 012] (i) {i}; 
                \node [below = 1mm of 120] (ii) {ii}; 
                \node [below = 1mm of 201] (iii) {iii}; 
                \node [below = 1mm of 220] (iv) {iv,vii}; 
                \node [right = 1mm of 122] (v) {v,viii}; 
                \node [right = 1mm of 222] (vi) {vi}; 
                \node [below = 1mm of 212] (ix) {ix}; 
                \draw [->,line width=2pt,-{Latex[scale=.7]}] (012) -- node {$0$} (120);
                \draw [->,line width=2pt,-{Latex[scale=.7]}] (212) -- node {$2$} (122);
                \draw [->,-{Latex[scale=1.2]}] (101) -- node {$2$} (012);
                \draw [->,-{Latex[scale=1.2]}] (101) -- node {$1$} (011);
                \draw [->,-{Latex[scale=1.2]}] (101) to[bend left] node {$0$} (010);
                \draw [->,-{Latex[scale=1.2]}] (011) -- node {$2$} (112);
                \draw [->,-{Latex[scale=1.2]}] (010) -- node {$1$} (101);
                \draw [->,line width=2pt,-{Latex[scale=.7]}] (120) -- node {$1$} (201);
                \draw [->,-{Latex[scale=1.2]}] (201) -- node {$1$} (011);
                \draw [->,-{Latex[scale=1.2]}] (201) -- node {$0$} (010);
                \draw [->,-{Latex[scale=1.2]}] (122) -- node {$1$} (221);
                \draw [->,-{Latex[scale=.7]},line width=2pt] (122) -- node {$0$} (220);
                \draw [->,-{Latex[scale=.7]},line width=2pt] (122) -- node {$2$} (222);
                \draw [->,-{Latex[scale=1.2]}] (221) -- node {$2$} (212);
                \draw [->,line width =2pt,-{Latex[scale=.7]}] (220) -- node {$1$} (201);
                \draw [->,line width=2pt,-{Latex[scale=.7]}] (222) -- node {$0$} (220);
                \draw [->,-{Latex[scale=1.2]}] (112) -- node {$2$} (122);
            \end{tikzpicture}
        }\\
        {\small$G_{3,3}$}
    \end{minipage}
    \caption{A trivially-labeled path between a pair of words in $\ker(\tail)$ induced by a path in a depth 2 suffix extension graph. The Roman numerals indicate the order in which the vertices are visited in the Rauzy graph. This takes place in the language defined by the primitive substitution $0\mapsto 12, 1\mapsto 2, 2\mapsto 01$.}
\end{figure}

By combining Proposition \ref{p:paths} with Remark \ref{r:local}, it then follows that for each $m\geq 1$, the map $\tail$ defines a group-preserving morphism:
\begin{equation*}
    \tail\colon G_{m,m}\to G_{m-1,m-1}.
\end{equation*}
But clearly, the class of all group-preserving morphisms is closed under composition. Therefore, in a suffix-connected language, the following is a group-preserving morphism for all $m\geq 1$:
\begin{equation*}
    \tail^{m-1}\colon G_{m,m}\to G_{1,1}.
\end{equation*}
We immediately deduce the following:
\begin{corollary}\label{c:suff-connected}
    Let $L$ be a suffix-connected recurrent language and $u\in L$ with $u\neq\emptyw$. Then $H_{u,\emptyw} = H_{b,\emptyw}$, where $b$ is the last letter of $u$.
\end{corollary}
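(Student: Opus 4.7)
The plan is essentially to read off the corollary from the paragraph immediately preceding it. Setting $m = |u|$ and letting $b$ denote the last letter of $u$, that paragraph already establishes that $\tail^{m-1}\colon G_{m,m}\to G_{1,1}$ is an onto group-preserving digraph morphism: in each intermediate step, $\tail\colon G_{j,j}\to G_{j-1,j-1}$ is onto by the discussion in Section~\ref{s:rauzy}, and Proposition~\ref{p:paths} applied with $k=j$ and $e=j$ (using Remark~\ref{r:local} to pass from suffix-connectedness to $(j-1,j)$-suffix-connectedness) makes it group-preserving; closure under composition then takes care of the iteration. The task left is merely to interpret this statement at the single vertex $u$.

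First, I would compute $\tail^{m-1}(u)$ explicitly. Writing $u = a_0 a_1 \cdots a_{m-1}$ and using the definition $\tail(w) = w[1:|w|]$, a one-line induction gives $\tail^{j}(u) = a_j a_{j+1}\cdots a_{m-1}$; specializing to $j = m-1$ yields $\tail^{m-1}(u) = a_{m-1} = b$. (The case $m=1$ is trivial, since then $u = b$ and the iterated map is the identity on $G_{1,1}$.)

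Next, I would unpack the meaning of ``group-preserving'' for the onto morphism $\tail^{m-1}$: its kernel $\equiv$ is a group-preserving equivalence on $\vertex(G_{m,m})$, and the induced isomorphism $G_{m,m}/{\equiv}\cong G_{1,1}$ identifies the group of $G_{m,m}$ at $u$ with the group of $G_{1,1}$ at $\tail^{m-1}(u) = b$. By the very definition of the notation $H_{\cdot,\cdot}$, this is exactly the desired equality $H_{u,\emptyw} = H_{b,\emptyw}$.

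There is no genuine obstacle here: all the nontrivial combinatorial and group-theoretic content was absorbed into Proposition~\ref{p:paths} and the discussion preceding the corollary. The corollary itself is simply a translation of the group-preserving property of $\tail^{m-1}$ into the language of Rauzy groups, and the only verification required is the letter-level calculation that $\tail^{m-1}$ sends $u$ to its last letter.
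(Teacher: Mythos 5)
Your proposal is correct and follows exactly the paper's route: the corollary is read off from the group-preserving morphism $\tail^{m-1}\colon G_{m,m}\to G_{1,1}$ obtained by combining Proposition~\ref{p:paths} (via Remark~\ref{r:local}) with closure of group-preserving morphisms under composition, plus the observation that $\tail^{m-1}(u)=b$ and that the morphism is onto. Your spelled-out verification at the vertex $u$ is just the "immediate deduction" the paper leaves implicit, so nothing further is needed.
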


Let us highlight another particular case of this result. The condition of being $(m,1)$-suffix-connected is equivalent to being $m$-connected, meaning that $\ext(w)$ is connected for all $w\in L\cap A^m$, which in turn is equivalent to the dual condition of being $(m,1)$-prefix-connected. Combining Proposition \ref{p:paths} with its dual for the special case $e=1$, we obtain the following result, which is reminescent of \cite[Proposition~4.2]{Berthe2015}:
\begin{corollary}\label{c:connected}
    If $L$ is a $(m-1)$-connected recurrent language, where $m\geq 1$, then:
    \begin{enumerate}
        \item For $0\leq k\leq m-1$, $\ker(\init)$ is a group-preserving equivalence relation of $G_{m,k}$;
        \item For $1\leq k\leq m$, $\ker(\tail)$ is a group-preserving equivalence relation of $G_{m,k}$.
    \end{enumerate}
\end{corollary}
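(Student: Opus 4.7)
The plan is to reduce the corollary to Proposition \ref{p:paths} and its prefix-dual, after first verifying that being $(m-1,1)$-suffix-connected coincides with being $(m-1)$-connected (and symmetrically for prefixes).

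So I would start by unwinding the definition with $d=1$: the prefix $u$ of length $d-1$ is empty, so the natural embedding of $\lext(w)$ into $\ext_{1,1}(\tail^0(w))=\ext(w)$ is simply the identity inclusion. Using recurrence of $L$ together with $A\subseteq L$, every $b\in\rext(w)$ admits some $a\in\lext(w)$ with $awb\in L$ (take the last letter of any non-empty left extension of $wb$). Hence every vertex of $\rext(w)$ is adjacent to some vertex of $\lext(w)$ in $\ext(w)$, which forces $\ext(w)$ itself to be connected as soon as $\lext(w)$ lies in a single connected component. The converse is trivial, and the prefix version is entirely analogous, working with the natural embedding $\rext(w)\hookrightarrow\ext_{1,1}(\init^0(w))=\ext(w)$.

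Statement (2) then follows as an immediate application of Proposition \ref{p:paths} with $e=1$, whose admissible range $e\leq k\leq m$ becomes exactly $1\leq k\leq m$.

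For statement (1), I would invoke the prefix-dual of Proposition \ref{p:paths}. This dual is obtained by rerunning its proof verbatim with $\init$ and $\tail$, $\lext$ and $\rext$, and suffix- and prefix-extension graphs swapped throughout. The only non-cosmetic change concerns the range of $k$: the consecutive words $w_i$ and $z_i$ now differ in their \emph{last} $d$ letters rather than their first, so the label-matching condition $w_i[k:k+d]=z_i[k:k+d]$ translates to $k+d\leq m$, giving the admissible range $0\leq k\leq m-e$. Specialising to $e=1$ yields $0\leq k\leq m-1$, which is exactly the range claimed. I expect this index book-keeping to be the only real obstacle, and it is entirely routine.
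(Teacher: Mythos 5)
Your proposal is correct and follows essentially the same route as the paper: the paper likewise observes that $(m-1)$-connectedness coincides with $(m-1,1)$-suffix-connectedness (and its prefix analogue) and then invokes Proposition~\ref{p:paths} together with its prefix-dual in the case $e=1$. Your extra verifications — the recurrence argument for the equivalence of the connectedness notions and the index bookkeeping showing the dual's admissible range is $0\leq k\leq m-e$ — are exactly the details the paper leaves implicit, and they are carried out correctly.
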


\section{Return sets}
\label{s:return}

Let us recall that the \emph{return set to $(u,v)$ in $L$} is the set of all words $r\in L$ such that $urv\in L$, $urv$ starts and ends with $uv$, and contains exactly two occurrences of $uv$. We denote this set by $\ret_{u,v}$. For basic properties of return sets, see \cite{Durand1998}. 
\begin{definition}
    Let $(u,v)$ be such that $uv\in L$. The subgroup of $F(A)$ generated by $\ret_{u,v}$ is denoted by $K_{u,v}$. We call this a \emph{return group} of $L$. 
\end{definition}

Our main result for this section relates the return groups with the Rauzy groups.
\begin{proposition}\label{p:zigzag}
    Let $L$ be a recurrent language and let $u,v$ be such that $uv\in L$.
    \begin{enumerate}
        \item $K_{u,v}\leq H_{u,v}$.
        \item If $\ret_{u,v}$ is finite and $s$ is one of its longest elements, then $H_{u,sv}\leq K_{u,v}$.
    \end{enumerate}
\end{proposition}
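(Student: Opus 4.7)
Plan for the proof of Proposition~\ref{p:zigzag}.

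For part~(1), this follows directly from Lemma~\ref{l:rauzy1}. For each $r \in \ret_{u,v}$, apply that lemma to $x = urv$ with $d = |r|$ to produce a path in $G_{m,|u|}$ from $\init^{|r|}(urv)$ to $\tail^{|r|}(urv)$ carrying label $urv[|u|:|u|+|r|] = r$. Because $r \in \ret_{u,v}$ forces $urv$ to start and end with $uv$, both endpoints coincide with $uv$, and the path is a loop at $uv$ with label $r$. Thus $r \in H_{u,v}$, and since this holds for every generator of $K_{u,v}$, the inclusion follows.

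For part~(2), write $m' = |usv|$ and $k = |u|$. I first show that every \emph{positive} loop at $usv$ in $G_{m',k}$ has label in $K_{u,v}$. Such a loop of length $n$ corresponds to a word $W \in L$ of length $m' + n$ with $W[0:m'] = W[n:n+m'] = usv$, whose label is $W[k:k+n]$. Because $s \in \ret_{u,v}$, the word $usv$ has exactly two occurrences of $uv$, at positions $0$ and $|s|$; combined with the symmetric fact at the end, the positions of $uv$ in $W$ form a chain
\begin{equation*}
    0 = p_0 < p_1 = |s| < p_2 < \dots < p_{t-1} = n < p_t = n+|s|.
\end{equation*}
Each consecutive pair yields a return word $r_i = W[p_i+|u|:p_{i+1}+|u|] \in \ret_{u,v}$, with $r_{t-1} = s$ extracted from the final copy of $usv$. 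A length comparison (total length of $r_0 \cdots r_{t-1}$ equals $p_t = n+|s|$) then gives $W[k:k+n] = r_0 r_1 \cdots r_{t-2}$, a positive product of elements of $\ret_{u,v}$, hence in $K_{u,v}$.

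To promote this to arbitrary loops, I use that $G_{m',k}$ is strongly connected (by recurrence of $L$): build a spanning tree $T$ by breadth-first search from $usv$ using only positive edges, so that each tree path $T_y : usv \to y$ is a positive walk. Then $H_{u,sv}$ is generated by the elements $\lambda(T_y)\lambda(e)\lambda(T_z)^{-1}$ as $e = (y,a,z)$ ranges over non-tree edges. For each such edge, pick a positive walk $P : z \to usv$, which exists by strong connectivity. Both $T_y \cdot e \cdot P$ and $T_z \cdot P$ are positive loops at $usv$, so by the first step their labels lie in $K_{u,v}$. The identity
\begin{equation*}
    \lambda(T_y)\lambda(e)\lambda(T_z)^{-1} = \lambda(T_y \cdot e \cdot P) \cdot \lambda(T_z \cdot P)^{-1}
\end{equation*}
then places each generator of $H_{u,sv}$ inside $K_{u,v}$, proving $H_{u,sv} \leq K_{u,v}$.

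The main obstacle is the combinatorial first step: one must justify that no extra occurrence of $uv$ appears inside the initial or final copy of $usv$ in $W$ (so that $p_1 = |s|$ and $p_{t-1} = n$), and then verify that the telescoping with $r_{t-1} = s$ trims the product $r_0 \cdots r_{t-1}$ down to exactly the label $W[k:k+n]$. Once that identity is secured, the passage from positive loops to all loops reduces to a familiar spanning-tree argument for the fundamental group of a strongly connected labeled digraph.
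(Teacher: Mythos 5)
Your part (1) is correct and in fact slightly more direct than the paper's argument: the paper reads $urv$ as a positive path and uses parts (3) and (4) of Lemma~\ref{l:rauzy2} to pin its middle segment at the vertex $uv$, whereas you obtain the loop at $uv$ labelled $r$ in one stroke from Lemma~\ref{l:rauzy1}; both are fine. Likewise, your reduction of part (2) to positive loops via a positive spanning tree is a sound substitute for the paper's citation of Steinberg's result that the groups of a strongly connected digraph are generated by labels of positive loops.

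The gap is in the claimed decomposition of the label of a positive loop. First, the word $W$ read along a loop of length $n$ is in general \emph{not} in $L$: only its factors of length at most $m'+1$ (the edges and their factors) are guaranteed to lie in $L$ --- this is exactly why Lemma~\ref{l:rauzy2}(2) carries a length restriction. Consequently, the assertion that each block $r_i = W[p_i+|u|:p_{i+1}+|u|]$ between consecutive occurrences of $uv$ belongs to $\ret_{u,v}$ is not automatic: it requires $ur_iv\in L$, hence requires knowing beforehand that $p_{i+1}-p_i\leq|s|$, so that $ur_iv$ fits inside a single edge. Establishing this bound on the gaps is the real content of part (2), and it is precisely where the hypotheses that $\ret_{u,v}$ is finite and that $s$ is a longest element are used: given an occurrence of $uv$ at position $p_i$, the window $W[p_i:p_i+m']$ lies in $L$, starts with $uv$, and by recurrence is prefix-comparable with some element of $u\ret_{u,v}v$; since $|usv|=m'$ is maximal among such elements, that element must be a prefix of the window, producing a second occurrence of $uv$ within distance $|s|$ of $p_i$. (The paper carries out this argument as an induction on the length of the path.) Your proposal instead identifies the ``main obstacle'' as the absence of extra occurrences of $uv$ inside the initial and final copies of $usv$ and the telescoping bookkeeping, both of which are immediate, while the maximality argument --- the step the hypotheses exist for --- is missing. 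Once it is supplied, your chain-of-occurrences formulation does go through.
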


The following lemma recalls several properties of Rauzy graphs that will be relevant. By a \emph{positive path}, we mean a path which consists only of edges in $\edge(G)$ or, equivalently, which contains no formal inverses.
\begin{lemma}\label{l:rauzy2}
    Let $L$ be a recurrent language and let $u,v$ be such that $uv\in L$, $|u|=k$ and $|u|+|v|=m$.
    \begin{enumerate}
        \item Any element $w\in L$ is the label of a positive path in $G_{m,k}$.
        \item Any label $w$ of a positive path in $G_{m,k}$ of length at most $m+1$ is in $L$.
        \item If $p\colon x\to uv$ is a positive path in $G_{m,k}$, then $\lambda(p)$ is suffix-comparable with $u$. Moreover, there is at least one such path satisfying $\lambda(p)=u$.
        \item If $q\colon uv\to y$ is a positive path in $G_{m,k}$, then $\lambda(q)$ is prefix-comparable with $v$. Moreover, there is at least one such path satisfying $\lambda(q)=v$.
    \end{enumerate}
\end{lemma}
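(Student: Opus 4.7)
The plan is to set up a correspondence between positive paths in $G_{m,k}$ and certain words in $A^*$, then read off all four items from that dictionary. Given a positive path $(e_0, \ldots, e_{\ell-1})$ in $G_{m,k}$, the consecutive-edges condition $\tail(e_i) = \init(e_{i+1})$ lets us glue the $e_i$ into a single word $x \in A^{m+\ell}$ satisfying $e_i = x[i:i+m+1]$ for all $0 \leq i \leq \ell-1$; the label of the path, being the concatenation of the edge labels $e_i(k)$, then equals $x[k:k+\ell]$. Conversely, any $x \in A^{m+\ell}$ whose length-$(m+1)$ factors all belong to $L$ determines such a positive path. With this dictionary, each item reduces to a short verification.

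For item (1), given $w \in L$, the recurrence of $L$ together with closure under factors yields extensions $p$ of length $k$ on the left and $q$ of length $m-k$ on the right such that $x = pwq \in L$; factor-closure then places every length-$(m+1)$ factor of $x$ in $L$, so $x$ defines a positive path in $G_{m,k}$ whose label is $x[k:k+|w|] = w$. For item (2), I run the dictionary backwards: glue the path into $x \in A^{m+\ell}$, and observe that when $\ell \leq m+1$ the factor $x[k:k+\ell]$ is contained in some single edge $e_i = x[i:i+m+1]$; explicitly, any $i$ with $\max(0, k+\ell-m-1) \leq i \leq \min(k, \ell-1)$ works, and this interval is non-empty precisely because $\ell \leq m+1$. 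Since $e_i \in L$ and $L$ is factor-closed, the label lies in $L$.

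For item (3), a positive path $p \colon x \to uv$ of length $\ell$ corresponds to a word $y \in A^{m+\ell}$ with $y[\ell:\ell+m] = uv$; since $|u| = k$, we have $u = y[\ell:\ell+k]$, while $\lambda(p) = y[k:k+\ell]$. Both factors end at the same position $k+\ell$ of $y$, so one is a suffix of the other, giving suffix-comparability. For the ``moreover'', take $\ell = k$ and use recurrence to produce $y = wuv \in L$ with $|w| = k$; the resulting positive path has length $k$, ends at $uv$, and has label $y[k:2k] = u$. Item (4) is dual: a positive path $q \colon uv \to y'$ of length $\ell$ comes from $z \in A^{m+\ell}$ with $z[0:m] = uv$, whence $v = z[k:m]$ and $\lambda(q) = z[k:k+\ell]$ both start at position $k$, giving prefix-comparability; for the ``moreover'' one takes $\ell = |v|$ and $z = uvw \in L$ with $|w| = |v|$, again produced by recurrence.

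The only step that demands a bit of care is item (2), where one must check that some single length-$(m+1)$ window of the glued word $x$ covers the positions $[k, k+\ell)$ carrying the label; this is the range computation indicated above, and it is precisely where the hypothesis $\ell \leq m+1$ is needed. Everything else amounts to bookkeeping on top of the gluing dictionary.
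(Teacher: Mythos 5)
Your proof is correct, and it is exactly the ``straightforward from the definition of $G_{m,k}$'' verification that the paper leaves to the reader (citing Berth\'e et al.\ for (1)--(2) and Almeida--Costa for (3)--(4)): the gluing correspondence between positive paths of length $\ell$ and words $x\in A^{m+\ell}$ whose length-$(m+1)$ windows are edges, with label $x[k:k+\ell]$, is the standard dictionary underlying those references. Your index computation in (2) (non-emptiness of $\max(0,k+\ell-m-1)\leq i\leq\min(k,\ell-1)$ exactly when $\ell\leq m+1$ and $k\leq m$) and the end-position/start-position comparisons in (3)--(4) correctly supply the details the paper omits.
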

All of these properties follow from the definition of $G_{m,k}$ in a straightforward manner. Parts (1) and (2) are standard and can be found for instance in \cite[Section~4.1]{Berthe2015}. Parts (3) and (4) are analogous to \cite[Lemma~4.5]{Almeida2016}. 

We are now ready to prove the proposition. Let us mention that Part (2) of the proposition is inspired by the proof of \cite[Theorem~4.7]{Berthe2015}, which relied partly on the fact that return sets of the form $\ret_{u,\emptyw}$ are prefix codes. However, this property no longer holds for general return sets and we had to find a way to avoid it. This is essentially what is accomplished by the very last paragraph of the proof.
\begin{proof}[Proof of Proposition~\ref{p:zigzag}]
    \textbf{(1)} Let $k=|u|$, $m=|u|+|v|$, and fix $r\in\ret_{u,v}$. Then $urv\in L$ is the label of a positive path $p$ in $G_{m,k}$ by Part (1) of Lemma~\ref{l:rauzy2}. Consider the factorization $p = q_1p'q_2$, where $\lambda(q_1)=u$, $\lambda(p')=r$ and $\lambda(q_2)=v$. Write $\alpha(p')=x_1$ and $\omega(p')=x_2$. Consider the factorization $x_1 = u_1v_1$, where $|u_1| = k$. Since $\omega(q_1) = x_1$, it follows from Part (3) of Lemma~\ref{l:rauzy2} that $u_1$ is suffix-comparable with $\lambda(q_1) = u$. As $|u|=k=|u_1|$, we conclude that $u_1 = u$. Similarly, $\alpha(p'q_2) = x_1$, so Part (4) implies that $v_1$ is prefix-comparable with $\lambda(p'q_2) = rv$. Since $r\in\ret_{u,v}$, the word $rv$ starts with $v$, and since $|v| = m-k = |v_1|$, we conclude that $v_1 = v$. Thus, $x_1 = uv$. A similar argument yields $x_2=uv$, so $p'$ is a loop over $uv$, and $\ret_{u,v}\subseteq H_{u,v}$. This proves (1).

    \textbf{(2)} Let $m'=|u|+|s|+|v|$, and consider a positive path in $G_{m',k}$ of the form $p\colon uvx\to uvy$. We start by proving the following claim: $w=\lambda(p)$ is a concatenation of elements of $\ret_{u,v}$. 

    To prove this claim, let us first consider two positive paths $q_1\colon x_1\stackrel{u}{\to}uvx$ and $q_2\colon uvy\stackrel{v}{\to}x_2$, whose existence is a consequence of Part (3) and (4) of Lemma~\ref{l:rauzy2}.  Since $\alpha(pq_2) = uvx$, Part (4) of Lemma~\ref{l:rauzy2} implies that $\lambda(pq_2)$ is prefix-comparable with $vx$; thus, it starts with $v$. Similarly, since $\omega(q_1p) = uvy$, Part(3) of Lemma~\ref{l:rauzy2} implies that $\lambda(q_1p)$ is suffix-comparable with $u$; thus, it ends with $u$. In particular, this implies  
    \begin{equation*}
        uwv=\lambda(q_1)\lambda(pq_2)=\lambda(q_1p)\lambda(q_2)\in uvA^*\cap A^*uv.
    \end{equation*}

    We now prove the claim by induction on $|w|=|p|$. If $|w|\leq |s|$, then $uwv$ is the label of the positive path $q_1pq_2$ in $G_{m',k}$, which has length at most $m'$. Hence, Part~(2) of Lemma~\ref{l:rauzy2} implies that $uwv\in L$. This, taken together with the fact that $uwv$ belongs to $uvA^*\cap A^*uv$, implies that $w$ is a concatenation of elements of $\ret_{u,v}$. This establishes the basis of the induction.

    For the inductive step, let us suppose that $|w|>|s|$. Let $p'$ be the prefix of $p$ of length $m'$ of $q_1pq_2$, and let $z = \lambda(p')$. By Part (2) of Lemma~\ref{l:rauzy2}, $z\in L$. Moreover, $z \in uvA^*$, so it is prefix-comparable with some element of $u\ret_{u,v}v$. But by assumption, $|z|$ is the maximal length of an element of $u\ret_{u,v}v$. Therefore, it follows that $z$ has at least two occurrences of $uv$. Since $z$ is a proper prefix of $uwv$, we deduce that $uwv$ has an occurrence of $uv$ at position $0<j<|s|$. Consider the factorization $p = p_1p_2$ where $|p_1|=j$, and let $x'=\omega(p_1)=\alpha(p_2)$. Since $j$ is an occurrence of $uv$ in $uwv = \lambda(q_1pq_2)$ and $|q_1p_1| = |u|+j$, it follows that $\lambda(q_1p_1)$ ends with $u$. Consider the factorization $x' = u'x''$, where $|u'|=|u|$. By Part (3) of Lemma~\ref{l:rauzy2}, $u'$ is suffix-comparable with $\lambda(q_1p_1)$, and since $|u'|=|u|$, it follows that $u' = u$. Similarly, the fact that $j$ is an occurrence of $uv$ in $uwv$, with $uwv = \lambda(q_1p_1p_2q_2)$ and $|q_1p_1| = |u|+j$, implies that $\lambda(p_2q_2)$ starts with $v$. By Part (4) of Lemma~\ref{l:rauzy2}, it follows that $x''$ is prefix-comparable with $\lambda(p_2q_2)$, and hence with $v$. However, recall that $x'\in L\cap A^{m'}$ where $m' = |u|+|s|+|v|$:
    \begin{equation*}
        |x''| = |x'| - |u| = |s|+|v| \geq |v|.
    \end{equation*}
    Therefore, $v$ is a prefix of $x''$, and $x''=vt$ for some word $t$. Hence, we conclude that $p_1,p_2$ satisfy:
    \begin{equation*}
        p_1\colon uvx\to uvt,\quad p_2\colon uvt\to uvy.
    \end{equation*}
    Since $0<j<|s|<|w|$, we have $|p_1|<|p|$ and $|p_2|<|p|$. Thus, by the induction hypothesis, both $\lambda(p_1)$ and $\lambda(p_2)$ are product of words in $\ret_{u,v}$. And, therefore, so is $w$. This finishes the proof of the claim.

    We finish the proof of Part (2) of the proposition by showing that it follows from that claim. First, recall that $G_{m',k}$ is \emph{strongly connected}, in the sense that any two vertices can be joined by a positive path. Moreover, the groups of a strongly connected digraph are generated by the labels of positive loops \cite[Corollary~4.5]{Steinberg2000}. Since the claim above shows in particular that the labels of positive loops over $usv$ in $G_{m',k}$ lie in $K_{u,v}$, the result follows.
\end{proof}

\section{Proof of the main result}
\label{s:main}

Let us first recall the statement of our main result, Theorem~\ref{t:main}: if $L$ is a suffix-connected uniformly recurrent language on $A$, then all the return groups of $L$ lie in the same conjugacy class and their rank is $n-c+1$, where $n = \Card(A)$ and $c$ is the number of connected components of $\ext(\emptyw)$.

The proof is split in two lemmas. In the first one, we apply the results obtained in the previous sections to show that (under the assumptions of Theorem~\ref{t:main}) all the return groups of $L$ belong to the same conjugacy class. The second lemma finishes the proof by showing that the groups in this conjugacy class have rank $n-c+1$.

\begin{lemma}\label{l:level1conjugacy}
    Let $L$ be a uniformly recurrent suffix-connected language. Then, the return groups of $L$ lie in the conjugacy class of subgroups of $F(A)$ generated by the groups of the Rauzy graph $G_{1,1}$.
\end{lemma}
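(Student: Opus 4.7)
The plan is to bracket $K_{u,v}$ between two Rauzy groups, show the two brackets are conjugate in $F(A)$, and then collapse the sandwich using a standard rigidity property of finitely generated subgroups of free groups.

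Fix $u,v$ with $uv\in L$. Uniform recurrence ensures $\ret_{u,v}$ is finite; let $s$ be one of its longest elements. Proposition~\ref{p:zigzag} then gives the two-sided estimate
\[
    H_{u,sv}\leq K_{u,v}\leq H_{u,v}.
\]

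Next I would identify both brackets with conjugates of groups in $G_{1,1}$. Iterating the identity $H_{ua,v}=a^{-1}H_{u,av}a$ one letter at a time produces, for any $w$ with $uw\in L$, the relation $H_{u,w}=w\cdot H_{uw,\emptyw}\cdot w^{-1}$. Combining this with Corollary~\ref{c:suff-connected} gives $H_{u,w}=w\cdot H_{b_w,\emptyw}\cdot w^{-1}$, where $b_w$ is the last letter of $uw$. Applying this with $w=v$ and $w=sv$ yields
\[
    H_{u,v}=v\cdot H_{b,\emptyw}\cdot v^{-1},\qquad H_{u,sv}=(sv)\cdot H_{b',\emptyw}\cdot (sv)^{-1},
\]
where $b$, $b'$ are the last letters of $uv$ and $usv$ respectively. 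Since $L$ is recurrent, $G_{1,1}$ is connected, so $H_{b,\emptyw}$ and $H_{b',\emptyw}$ already lie in a single conjugacy class of $F(A)$; chaining these conjugacies shows that $H_{u,sv}$ is conjugate to $H_{u,v}$ in $F(A)$.

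To close, I would invoke the classical rigidity fact that in a free group, a finitely generated subgroup $H$ satisfying $gHg^{-1}\subseteq H$ actually satisfies $gHg^{-1}=H$. The group $H_{u,v}$ is finitely generated (the Rauzy graph is finite); by the previous step $H_{u,sv}$ is simultaneously a subgroup and a conjugate of $H_{u,v}$, so rigidity forces $H_{u,sv}=H_{u,v}$. The sandwich collapses to $K_{u,v}=H_{u,v}=v\cdot H_{b,\emptyw}\cdot v^{-1}$, placing $K_{u,v}$ in the conjugacy class of the Rauzy groups of $G_{1,1}$. The main obstacle is really just the appeal to the free-group rigidity lemma, together with some light bookkeeping in the degenerate case $v=\emptyw$, where $b$ and $b'$ may genuinely differ and the connectedness of $G_{1,1}$ is essential to match their conjugacy classes.
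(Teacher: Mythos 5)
Your argument is correct, but it diverges from the paper's proof at the crucial collapsing step, and the divergence is worth spelling out. Both proofs start identically: the sandwich $H_{u,sv}\leq K_{u,v}\leq H_{u,v}$ from Proposition~\ref{p:zigzag} (with $s$ a longest return word, available by uniform recurrence) and the identity $H_{u,w}=wH_{b_w,\emptyw}w^{-1}$ obtained from Corollary~\ref{c:suff-connected} and the conjugation formula $H_{ua,v}=a^{-1}H_{u,av}a$. The paper then exploits a piece of combinatorial information you discard: since $s\in\ret_{u,v}$, the word $usv$ \emph{ends with} $uv$, so the last letters $b$ and $b'$ coincide (whenever $uv\neq\emptyw$, which is the only case considered), giving directly $H_{u,sv}=svH_{b,\emptyw}v^{-1}s^{-1}=sH_{u,v}s^{-1}$; and because $s\in K_{u,v}\leq H_{u,v}$, conjugation by $s$ fixes $H_{u,v}$, so the sandwich collapses with no external input. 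You instead keep only the weaker statement that $H_{u,sv}$ is \emph{some} conjugate of $H_{u,v}$ (via connectedness of $G_{1,1}$) and then invoke the rigidity fact that a finitely generated subgroup of a free group cannot be conjugated properly into itself. That fact is true (it follows, e.g., from Takahasi's theorem on ascending chains of bounded rank, or from a Stallings-graph count), $H_{u,v}$ is indeed finitely generated since $G_{1,1}$ is finite, and your application of it is sound, so the proof goes through; but it imports a nontrivial theorem that lies outside the toolkit the paper sets up, and you would need to cite or prove it. What the paper's route buys is complete self-containedness and the stronger explicit conclusion $K_{u,v}=H_{u,v}=vH_{b,\emptyw}v^{-1}$ with a concrete conjugator, which is what makes the representative of the conjugacy class computable. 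One small correction: even when $v=\emptyw$ (but $u\neq\emptyw$), $us$ still ends with $u$, so $b'=b$ there too --- the only genuinely degenerate case is $uv=\emptyw$, which the paper simply excludes; noticing $b'=b$ is precisely what lets you bypass the rigidity lemma altogether.
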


\begin{proof}
    Consider a pair $(u,v)$ such that $uv\in L$ and $uv\neq\emptyw$. By Corollary~\ref{c:suff-connected}, $H_{uv,\emptyw} = H_{b,\emptyw}$, where $b$ is the last letter of $uv$. Using the conjugacy relation between Rauzy groups, we then have:
    \begin{equation*}
        H_{u,v} = vH_{uv,\emptyw}v^{-1} = vH_{b,\emptyw}v^{-1},
    \end{equation*}
    and this equality holds for any such pair $(u,v)$.

    Since $L$ is uniformly recurrent, we may choose an element $s\in\ret_{u,v}$ of maximum length and by Proposition~\ref{p:zigzag}:
    \begin{equation*}
        H_{u,sv} \leq K_{u,v} \leq H_{u,v}.
    \end{equation*}
    Applying the conclusion of the previous paragraph to the pair $u,sv$ while noting that $s\in K_{u,v} \leq H_{u,v}$, we get:
    \begin{equation*}
        H_{u,sv} = svH_{b,\emptyw}v^{-1}s^{-1} = sH_{u,v}s^{-1} = H_{u,v}.
    \end{equation*}
    Hence, $K_{u,v} = H_{u,v} = vH_{b,\emptyw}v^{-1}$. 
\end{proof}

The next lemma concludes the proof of Theorem~\ref{t:main}. It also gives an effective way of computing the Stallings equivalence of $G_{1,1}$ and, in turn, the Stallings equivalence can be used to find a basis for any of the groups defined by $G_{1,1}$. 
\begin{lemma}\label{l:level1groups}
    Let $L$ be a recurrent language. Then, the groups of the Rauzy graph $G_{1,1}$ have rank $n-c+1$, where $n=\Card(A)$ and $c$ is the number of connected components of $\ext(\emptyw)$.
\end{lemma}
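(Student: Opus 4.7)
My plan is to describe the Stallings equivalence on $G_{1,1}$ in closed form, using the rigid structure of that particular Rauzy graph, and then count vertices and edges of the folded quotient. First, unpack the definition of $G_{1,1}$: its vertices are the letters of $A$ (using $A\subseteq L$), and its edges are the two-letter words $xy\in L\cap A^2$, oriented from $x$ to $y$ with label $y$. The key observation is that in $G_{1,1}$ the label of every edge coincides with its target, and this rigidity is what allows one to compute $\equiv_S$ explicitly.

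Next, I would determine $\equiv_S$ from the folding rules of Section~\ref{s:stallings}. Because label equals target, rule~(F) collapses to transitivity: edges $(x,a,x')$ and $(y,a,y')$ force $x'=a=y'$. All non-trivial identifications come from rule~(F'): edges $(x',b,x)$ and $(y',b,y)$ force $x=b=y$, after which the rule identifies $u\equiv_S v$ whenever $u,v\in A$ share a common right extension $b$ (that is, $ub,vb\in L$). Iterating, $\equiv_S$ is the equivalence on $A$ generated by "shared right extension"; and since $\ext(\emptyw)$ has an edge between the left copy of $u$ and the right copy of $v$ exactly when $uv\in L$, two letters are $\equiv_S$-equivalent if and only if they lie in the same connected component of $\ext(\emptyw)$ when viewed as left vertices.

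With this description, I count vertices and edges of $G'=G_{1,1}/{\equiv_S}$. Recurrence together with $A\subseteq L$ ensures every letter has at least one left and one right extension, so every component of $\ext(\emptyw)$ contains vertices on both sides; hence the components biject with the $\equiv_S$-classes on $A$, giving $V'=c$. For edges, any edge of $G_{1,1}$ labeled $a$ has target $a$ and source in $\lext(a)$, and all elements of $\lext(a)$ are adjacent to the right copy of $a$ in $\ext(\emptyw)$, hence share one component and one $\equiv_S$-class; so all edges labeled $a$ collapse to a single edge in $G'$, yielding $E'=n$. Finally, $G'$ is connected (by recurrence) and folded (no further Stallings identifications remain, by maximality of $\equiv_S$ from Proposition~\ref{p:stallings}), so by the group-preserving property its group at any vertex coincides with that of $G_{1,1}$; and for a connected folded labeled digraph the standard Stallings theory yields that this group is free of rank equal to the cyclomatic number $E'-V'+1=n-c+1$. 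The main obstacle is the explicit translation between Stallings equivalence on $G_{1,1}$ and connectivity in $\ext(\emptyw)$; once that is pinned down, the rest is essentially bookkeeping.
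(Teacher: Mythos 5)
Your proposal is correct and follows essentially the same route as the paper: you identify the Stallings equivalence on $G_{1,1}$ (via the label-equals-target rigidity and the folding rules) with the relation ``same connected component of $\ext(\emptyw)$'' on left vertices, count $c$ vertices and $n$ edges in the folded quotient, and apply the rank formula $E'-V'+1$. The only cosmetic difference is that you phrase $\equiv_S$ as generated by shared right extensions and deduce edge-injectivity from the component description, whereas the paper verifies closure of the component relation under (F) and (F\textquotesingle) directly and applies (F\textquotesingle) once more for the edge count; the substance is identical.
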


\begin{proof}
    A well-known consequence of Stallings algorithm is that the rank of any group generated by a connected digraph $G$ is
    \begin{equation*}
        \Card(\edge(G/{\equiv_S}))-\Card(\vertex(G/{\equiv_S})) + 1
    \end{equation*}
    (see \cite[Lemma~8.2]{Kapovich2002}). Thus, we need only to show that the quotient $G_{1,1}/{\equiv_S}$ has $c$ vertices and $n$ edges.

    Let us start by showing that $G_{1,1}/{\equiv_S}$ has $c$ vertices. By definition, we have $\vertex(G_{1,1}) = A = \lext(\emptyw)$. Let $\sim$ be the relation defined as follow: for $a,b\in A$, we have $a\sim b$ exactly when, viewed as elements of $\lext(\emptyw)$, $a$ and $b$ lie in the same connected component of $\ext(\emptyw)$. Note that the relation $\sim$ has precisely $c$ classes because every connected component of $\ext(\emptyw)$ contains at least one vertex in $\lext(\emptyw)$. Therefore, it suffices to show that ${\sim} = {\equiv_S}$. 

    We now prove the inclusion ${\sim}\subseteq{\equiv_S}$. Since $\ext(\emptyw)$ is bipartite, any path in $\ext(\emptyw)$ between elements of $\lext(\emptyw)$ has even length, and thus it suffices to argue for elements related by paths of length~2. Let us assume that $a,b\in\lext(\emptyw)=A$ are related by a path of length~2 inside $\ext(\emptyw)$. By definition of $\ext(\emptyw)$, the existence of such a path means that there is some $c\in A$ such that $ac, bc\in L$. But recall that $\edge(G_{1,1}) = L\cap A^2$, so we may view $e = ac$ and $f= bc$ as edges in $G_{1,1}$, both of which have label $c$. The path $(e, f^{-1})$ is then a trivially-labeled path between $a$ and $b$, so $a\equiv_S b$ as required.

    Let us prove the inclusion ${\equiv_S}\subseteq{\sim}$. By definition of $\equiv_S$, we only need to show that $\sim$ is closed under the two rules (F) and (F\textquotesingle). We argue for each separately. Let us fix $a,b,c,c',d,d'\in A$ such that $a\sim c'$, $b\sim d'$, $c\sim d$.
    \begin{itemize}
        \item[(F)] 
            We assume that there are edges $e\colon c\to c'$, $f\colon d\to d'$ such that $\lambda(e) = \lambda(f)$. Note that the maps $\tail$ and $\eval_1$ agree on $A^2$, so by definition $\omega = \lambda$ in the Rauzy graph $G_{1,1}$. Thus, under our current assumptions, $c' = d'$. Hence, $a\sim c' \sim b$ and $a\sim b$ by transitivity.
        \item[(F\textquotesingle)]
            We assume that there are edges $e\colon c'\to c$, $f\colon d'\to d$ such that $\lambda(e) = \lambda(f)$. Since $\omega = \lambda$ in $G_{1,1}$, we deduce that $d = c$. By definition, $\edge(G_{1,1}) = L\cap A^2$, $e = c'c\in L$ and $f = d'd = d'c\in L$. In particular, there is an edge in $\ext(\emptyw)$ joining $c'$ and $c$, and another one joining $d'$ and $c$. Hence, $a\sim c'\sim d'\sim b$ and $a\sim b$ by transitivity.        
    \end{itemize}

    It only remains to show that $G_{1,1}/{\equiv_S}$ has $n$ edges. We do this by showing that the labeling map $\lambda\colon G_{1,1}/{\equiv_S}\to A$ is a bijection. Fix a letter $a\in A$. Since $L$ is recurrent, there exists $b\in A$ with $ba\in L$. Hence, there is at least one edge labeled $a$ in $G_{1,1}$, and therefore also in $G_{1,1}/{\equiv_S}$. Hence, $\lambda\colon G_{1,1}/{\equiv_S}\to A$ is surjective. Now suppose that $G_{1,1}$ has two edges $e,f$ labeled $a$. As noted before, $\lambda=\omega$ in $G_{1,1}$, so $\omega(e) = a = \omega(f)$. Applying rule (F\textquotesingle), we conclude that $\alpha(e)\equiv_S\alpha(f)$. In particular, $e/{\equiv_S} = f/{\equiv_S}$, which proves that the labeling map $\lambda\colon G_{1,1}/{\equiv_S}\to A$ is injective.
\end{proof}

\begin{figure}\centering
    \begin{minipage}[b]{0.4\linewidth}\centering
        \begin{tikzpicture}[bend angle=20,auto,scale=0.9]
            \node (c) at (0,0) {$c$};
            \node (d) at ([shift=({0:1.3 cm})]c) {$d$};
            \node (b) at ([shift=({90:1.3 cm})]d) {$b$};
            \node (a) at ([shift=({180:1.3 cm})]b) {$a$};
            \node (c') at ([shift=({135:1.3 cm})]a) {$c'$};
            \node (d') at ([shift=({45:1.3 cm})]b) {$d'$};
            \draw (c) -- (d); 
            \draw[ultra thick] (c) to node {$\sim$} (d); 
            \draw[ultra thick] (c') to node {$\sim$} (a); 
            \draw[ultra thick] (d') to[swap] node {$\sim$} (b); 
            \draw[dashed, thick] (b) -- (a); 
            \draw[->,>=latex,bend left] (c) to node {$d'=c'$} (c') ;
            \draw[->,>=latex,bend right] (d) to node[swap] {$d'=c'$} (d') ;
        \end{tikzpicture}\\
        {\small(F)}
    \end{minipage}
    \begin{minipage}[b]{0.4\linewidth}\centering
        \begin{tikzpicture}[bend angle=20,auto,scale=0.9]
            \node (c) at (0,0) {$c$};
            \node (d) at ([shift=({0:1.3 cm})]c) {$d$};
            \node (b) at ([shift=({90:1.3 cm})]d) {$b$};
            \node (a) at ([shift=({180:1.3 cm})]b) {$a$};
            \node (c') at ([shift=({135:1.3 cm})]a) {$c'$};
            \node (d') at ([shift=({45:1.3 cm})]b) {$d'$};
            \draw[ultra thick] (c) to node {$\sim$} (d); 
            \draw[ultra thick] (c') to node {$\sim$} (a); 
            \draw[ultra thick] (d') to node[swap] {$\sim$} (b); 
            \draw[dashed, thick] (b) -- (a); 
            \draw[->,>=latex,bend right] (c') to node[swap] {$d=c$} (c) ;
            \draw[->,>=latex,bend left] (d') to node {$d=c$} (d) ;
        \end{tikzpicture}\\
        {\small(F\textquotesingle)}
    \end{minipage}
    \caption{The rules (F) and (F\textquotesingle) as they appear in the proof of Lemma~\ref{l:level1groups} when showing that ${\equiv_S}\subseteq{\sim}$.}
\end{figure}
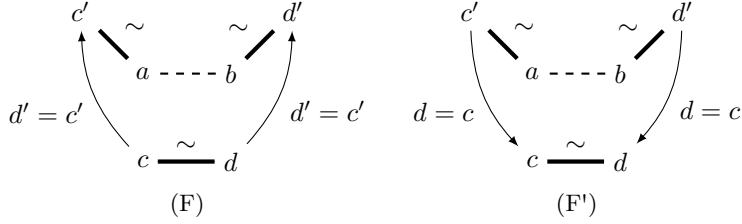

\section{Proof of the corollaries}
\label{s:cor}

Let us start this section by recalling the statement of Corollary~\ref{c:main1}: if $L$ is suffix-connected and uniformly recurrent, then the following statements are equivalent:
\begin{enumerate}
    \item All the return sets of $L$ generate the full free group $F(A)$.
    \item Some return set of $L$ generates a group of rank $\Card(A)$.
    \item The extension graph of the empty word is connected.
\end{enumerate}

\begin{proof}[Proof of Corollary~\ref{c:main1}]
    \textbf{(1) implies (2).} Trivial.

    \textbf{(2) implies (3).} By Theorem~\ref{t:main}, all return groups of $L$ have rank $n-c+1$ where $c$ is the number of connected components of $\ext(\emptyw)$ and $n = \Card(A)$. Under the assumption (2), we therefore have $n = n-c+1$ and $c=1$.

    \textbf{(3) implies (1).} If $\ext(\emptyw)$ is connected, then by Corollary~\ref{c:connected}, there is a group-preserving morphism $G_{1,1}\to G_{0,0}$. But note that $G_{0,0}$ has a single vertex with loops labeled by the letters of $A$. Thus, the group generated by $G_{0,0}$ is equal to the full free group $F(A)$, and so are all the groups of the level~1 Rauzy graph $G_{1,1}$. But recall that, for a suffix-connected language, all the return groups lie in the conjugacy class generated by the level~1 Rauzy groups (see Lemma \ref{l:level1conjugacy}), and so the result follows. 
\end{proof}

Before proving Corollary~\ref{c:main2}, we need some preliminary material. A word $w\in L$ is called \emph{neutral} if:
\begin{equation*}
    1-\chi(\ext(w)) = 0,
\end{equation*}
where $\chi(\ext(w))$, the characteristic of $\ext(w)$, is the difference between the number of vertices and edges in $\ext(w)$. A \emph{neutral language} is a language in which all non-empty words are neutral. The next result, quoted from \cite[Corollary~5.4]{Dolce2017}, will be useful to prove Corollary~\ref{c:main2}. 
\begin{lemma}\label{l:neutral}
    If $L$ is recurrent and neutral, then for all $u,v$ with $uv\in L$,
    \begin{equation*}
        \Card(\ret_{u,v}) = \Card(A)-\chi(\ext(\emptyw))+1.
    \end{equation*}
\end{lemma}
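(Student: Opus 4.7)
My plan is to prove the identity by a telescoping argument that expresses $\Card(\ret_{u,v})$ as a sum of bilateral multiplicities $\mu(x) = 1 - \chi(\ext(x))$ over factors of $L$, and then to collapse this sum using neutrality, which forces $\mu(x) = 0$ for every non-empty $x$. Under this hypothesis only the empty-word contribution survives, and it is exactly what gives rise to the right-hand side $\Card(A) - \chi(\ext(\emptyw)) + 1$.

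First I would realise return words as simple positive loops in a Rauzy graph: fix $N$ large enough (possible by uniform recurrence) that every element of $\ret_{u,v}$ has length at most $N - |u| - |v|$; then by Lemma~\ref{l:rauzy2}, each $r \in \ret_{u,v}$ corresponds bijectively to a simple positive loop based at $uv$ in $G_{N,|u|}$ whose interior avoids the vertex $uv$. Thus $\Card(\ret_{u,v})$ can be read off from the local combinatorics of $G_{N,|u|}$ near $uv$, and the problem reduces to an Euler-characteristic-type count in that graph.

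Next I would compare successive Rauzy graphs via the $\tail$ morphism to derive a telescoping expression. At each intermediate level $j$, the change in the relevant vertex and edge counts is governed by a sum of bilateral multiplicities over factors of length $j$, because for each factor $x$ the identity $\Card(\ext(x)) - \Card(\lext(x)) - \Card(\rext(x)) + 1 = -\mu(x)$ controls how many extensions of $x$ contribute new vertices versus new edges at the next level. Summing over $j$ and applying neutrality causes all non-empty contributions to vanish, leaving only the term $\mu(\emptyw) = 1 - \chi(\ext(\emptyw))$ plus boundary data at length one. The boundary data is computed directly from $G_{1,1}$, whose vertex set is $A$ and whose edge set is $L \cap A^2$; a routine rearrangement using $\chi(\ext(\emptyw)) = 2\Card(A) - \Card(L \cap A^2)$ then produces $\Card(A) - \chi(\ext(\emptyw)) + 1$.

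The main obstacle is the bookkeeping in the telescoping step, in particular verifying that the answer depends only on the factor $uv$ (not on the specific decomposition $(u,v)$) and that the terms at the top level $N$ cancel cleanly. One handles the first point by observing that the simple loops based at $uv$ in $G_{N,|u|}$ are determined by the underlying cyclic structure at $uv$, independently of the base-letter alignment encoded by $k = |u|$; the second point follows because, once $N$ exceeds the maximal length of a return word, no new loops appear at higher levels and the sum of multiplicities of factors of length $\geq 1$ is zero by neutrality.
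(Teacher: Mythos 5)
First, note that the paper does not prove this lemma at all: it is quoted verbatim from Dolce and Perrin \cite[Corollary~5.4]{Dolce2017}, so your attempt should be measured against that cited proof rather than anything in the present text. Against that benchmark, your outline has a genuine gap at its central step, the reduction of $\Card(\ret_{u,v})$ to ``an Euler-characteristic-type count'' in a Rauzy graph. Return words to $(u,v)$ correspond to \emph{first-return} loops at the vertex $uv$ of $G_{m,|u|}$ with $m=|uv|$ (this is essentially Part~(1) of Proposition~\ref{p:zigzag}), but these loops are not simple in general: they must avoid the base vertex in their interior, yet may revisit other vertices arbitrarily, and the Rauzy graph may contain cycles avoiding $uv$ altogether. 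Consequently the number of first-return loops at a vertex is \emph{not} a function of vertex and edge counts; in a graph with vertices $w,x$, edges $w\to x$, $x\to w$ and a loop at $x$, one has $\Card(\edge)-\Card(\vertex)+1=2$ while the first-return loops at $w$ are infinite in number. The quantity $\Card(\edge)-\Card(\vertex)+1$ computes the rank of the group generated by the graph (as in Lemma~\ref{l:level1groups}), not the cardinality of a return set, and conflating the two is exactly the distinction this paper is at pains to keep (return sets can have cardinality different from the rank of the group they generate, as the example of Section~\ref{s:example} shows). There is also a typing problem: for $N>|uv|$ the word $uv$ is not a vertex of $G_{N,|u|}$ at all.

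Your telescoping step with bilateral multiplicities $\mu(x)=1-\chi(\ext(x))$ does prove, correctly, that the complexity differences $p(j+1)-p(j)$ are constant for $j\geq 1$ in a neutral language, with the constant $\Card(L\cap A^2)-\Card(A)=\Card(A)-\chi(\ext(\emptyw))$ up to the usual bookkeeping. But the bridge from this global statement to the exact count of return words to a \emph{specific} pair $(u,v)$ is precisely the nontrivial content of \cite[Corollary~5.4]{Dolce2017}, whose proof proceeds through a careful analysis of generalized extension graphs rather than through loop-counting in Rauzy graphs; your sketch asserts that the boundary terms ``cancel cleanly'' without supplying the argument that would make them do so. Two smaller points: you invoke uniform recurrence to bound the lengths of return words, whereas the lemma assumes only recurrence and neutrality (finiteness of $\ret_{u,v}$ is part of the conclusion, not a free hypothesis); and a first-return loop need not determine, nor be determined by, ``the underlying cyclic structure at $uv$ independently of $k=|u|$'' without further justification.
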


With this lemma in mind, let us recall the statement of Corollary~\ref{c:main2}: if $L$ is uniformly recurrent, connected and neutral, then the following statements are equivalent:
\begin{enumerate}
    \item Some return set of $L$ is a free subset of the free group $F(A)$.
    \item All return sets of $L$ are free subsets of the free group $F(A)$.
    \item $L$ is a tree set.
\end{enumerate}

\begin{proof}[Proof of Corollary~\ref{c:main2}]
    We first recall the following fact, which is a straightforward consequence of the well-known Hopfian property of $F(A)$: a finite subset of $F(A)$ is free if and only its cardinality agrees with the rank of the subgroup it generates. Moreover, note that under our assumptions, the return sets of $L$ all have the same cardinality (by Lemma~\ref{l:neutral}), as well as the same rank (by Theorem~\ref{t:main}). Therefore if \emph{some} return set is free, then \emph{all} return sets are free, that is to say (1) and (2) are equivalent.

    To prove the equivalence of (2) and (3), we use the following fact about graphs: a simple graph $G$ is a forest if and only if it has exactly $\chi(G)$ connected components \cite[Exercise~2.1.7~(b)]{Bondy1976}. On the one hand, this implies that in a connected neutral language, the extension graph of any non-empty word must be a tree (since neutrality implies $\chi(\ext(w)) = 1$). Thus, a neutral connected language is a tree set if and only if $\ext(\emptyw)$ is a forest, if and only if $\chi(\ext(\emptyw)) = c$, where $c$ denotes the number of connected components of $\ext(\emptyw)$. This is also equivalent to the following equality:
    \begin{equation*}
        \Card(A)-\chi(\ext(\emptyw))+1 = \Card(A)-c+1.
    \end{equation*}
    Let us fix $u, v$ with $uv\in L$. Lemma~\ref{l:neutral} implies that $\Card(\ret_{u,v})$ is equal to the left-hand side of the previous equation, while Theorem~\ref{t:main} implies that $\rank(K_{u,v})$ is equal to the right hand side. Since (2) holds exactly when $\Card(\ret_{u,v}) = \rank(K_{u,v})$ for all such $u,v$, the result follows.
\end{proof}

\section{Suffix-connected example}
\label{s:example}

This section is devoted to the proof of Theorem~\ref{t:example}. We consider the following substitution on the alphabet $A=\{0,1,2\}$:
\begin{equation*}
    \begin{array}{llll} 
        \varphi\colon & 0 & \mapsto & 0001 \\
                 & 1 & \mapsto & 02 \\ 
                 & 2 & \mapsto & 001 
    \end{array}.
\end{equation*}
Note that $\varphi$ is primitive (since for every $a,b\in A$, $a$ occurs in $\varphi^3(b)$), and that $\varphi(A)$ is a \emph{prefix code} (no word in $\varphi(A)$ is a proper prefix of another). In particular, this implies that $\varphi$ is injective, a fact that will be used several times. We recall that the \emph{language} defined by $\varphi$ is the subset of all words $w\in A^+$ such that $w$ is a factor of $\varphi^n(a)$ for some $a\in A$ and $n\in\nn$. For the current section, $L$ denotes the language of $\varphi$. As we already mentioned, it is well known that the language of a primitive substitution is uniformly recurrent. We will show that $L$ is suffix-connected, and deduce that all the return sets of $L$ generate the full free group $F(A)$. 

The proof, being a bit lengthy, is organized in \ref*{step:conclude} steps. Let us give a quick outline of each step:
\begin{enumerate}
    \item\label{step:classification} We show that every right special factor of length at least 2 either ends with 00 and satisfies $\rext(x) = \{0,1\}$; or ends with 10 and satisfies $\rext(x) = \{0,2\}$. Similarly, we show that every left special factor of length at least 3 either starts with 000 and satisfies $\lext(x) = \{1,2\}$; or starts with 001 and satisfies $\lext(x) = \{0,1\}$.
    \item\label{step:001} We show that $L$ contains only 4 bispecial factors starting with 001 and we compute them.
    \item\label{step:stability} We show that if $x$ is a bispecial factor that starts with $000$, then
        \begin{equation*}
            \ext(x) \isom
            \begin{cases}
                \ext(\varphi(x)0) & \text{if $x$ ends with 00},\\
                \ext(\varphi(x)00) & \text{if $x$ ends with 10}.
            \end{cases}
        \end{equation*}
    \item\label{step:wk} We define inductively a sequence of words $(w_k)_{k\in\nn}$ of increasing lengths, and we show that the disconnected elements of $L$ are precisely the members of that sequence.
    \item\label{step:conclude} We define a sequence of integers $(d_k)_{k\in\nn}$ such that $\lext(w_k)$ embeds in one connected component of $\ext_{d_k,d_k}(\tail^{d_k-1}(w_k))$. \label{i:last}
\end{enumerate}

Some of these steps involve the computation of the sets $L\cap A^k$ for several values of $k$, some of them quite large. We will omit the details of these computations and provide only the results. These computations can be checked either by hand (e.g.\ with the algorithm described in \cite[Section~3.2]{Balchin2017}), or perhaps more appropriately using SageMath \cite{Developers2020}. At the time of writing, a SageMath web interface can be accessed at the address \url{https://sagecell.sagemath.org}. To compute the set $L\cap A^k$, simply evaluate the following line of code in the web interface:
\begin{center}
    \texttt{WordMorphism(\{0:[0,0,0,1],1:[0,2],2:[0,0,1]\}).language($k$)}.
\end{center}

\subsection*{Step~\ref{step:classification}}
We prove the following claim. 
\begin{claim}
    Let $x$ be a right special factor of $L$ of length at least 2. Then one of the two following alternatives hold:
    \begin{enumerate}
        \item $x$ ends with $00$ and $\rext(x) = \{0,1\}$.
        \item $x$ ends with $10$ and $\rext(x) = \{0,2\}$.
    \end{enumerate}
    Dually, let $y$ be a left special factor of $L$ of length at least 3. Then one of the two following alternatives hold:
    \begin{enumerate}
        \item $y$ starts with $000$ and $\lext(y) = \{1,2\}$.
        \item $y$ starts with $001$ and $\lext(y) = \{0,1\}$.
    \end{enumerate}
\end{claim}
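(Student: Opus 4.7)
My plan is to handle the right special case first by a short analysis of the substitution structure, and then mirror the argument on the left. The starting observation is that every image $\varphi(a)$ begins with $0$, while $\varphi(0), \varphi(2)$ end in $1$ and $\varphi(1)$ ends in $2$. Consequently, inside any $\varphi^n(a)$ the letter $1$ is always the last letter of some $\varphi(b)$, and similarly for $2$; hence both are always followed by the first letter of the next image, which is $0$. Thus $\rext(1) = \rext(2) = \{0\}$, so any right special factor of length $\geq 1$ must end with $0$, and a right special factor of length $\geq 2$ must end with one of $00$, $10$, $20$.

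The next step is to determine $\rext$ on each of these three two-letter suffixes by inspecting all possible occurrences. The factor $00$ lies only inside $\varphi(0)$ or $\varphi(2)$ (it cannot straddle a boundary since no $\varphi(a)$ ends in $0$), and in both cases it is followed only by $0$ or $1$, so $\rext(x)\subseteq\{0,1\}$ whenever $x$ ends in $00$; combined with right speciality this forces $\rext(x)=\{0,1\}$. The factor $10$ occurs only across an image boundary where the left image ends in $1$, so the letter after $10$ is the second letter of the following $\varphi(b)$, namely $0$ or $2$ (the second letters of $\varphi(0), \varphi(1), \varphi(2)$ being $0,2,0$); so $\rext(x)=\{0,2\}$. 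Finally, $20$ only occurs after $\varphi(1)$, which is always followed by $\varphi(0)$ (since after $1$ only $0$ can follow in $L$), so the letter after $20$ is always $0$ and $x$ cannot be right special. This finishes the right-special dichotomy.

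For the dual, a symmetric analysis of ``what can precede what'' gives $\lext(1)=\lext(2)=\{0\}$ (since $1$ and $2$ appear only in $\varphi(0), \varphi(2), \varphi(1)$ and in each case the preceding letter inside the image is $0$, while across boundaries the previous letter would need to be $1$ or $2$, which is ruled out by the same argument). So a left special factor of length $\geq 1$ begins with $0$. One level up, I will compute $\lext(00), \lext(01), \lext(02)$ directly: $02$ only occurs inside $\varphi(1)$, forcing $\lext(02)=\{1\}$; $01$ only occurs at the end of $\varphi(0)$ or $\varphi(2)$, and the preceding letter in both cases is $0$, so $\lext(01)=\{0\}$; while $00$ can be preceded by $0$, $1$ or $2$. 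So a left special factor of length $\geq 2$ must start with $00$. Finally, at length $\geq 3$ I will do the same with $000, 001, 002$: $002\notin L$ (already noted), $000$ occurs only at the start of $\varphi(0)$ and so is preceded by the last letter of the previous image, i.e., $1$ or $2$, giving $\lext(000)=\{1,2\}$; and $001$ occurs either at the end of $\varphi(0)$ (preceded by $0$) or at the start of $\varphi(2)$ (preceded by the previous image's last letter, where only the combinations corresponding to factors $02$ and $00$ at the upper level are allowed, yielding $0$ and $1$ but not $2$, since $12\notin L$), so $\lext(001)=\{0,1\}$.

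The main obstacle in writing this cleanly is the bookkeeping for factors that straddle image boundaries: to rule out things like $2001$, $202$, $101$, etc., one needs to observe that admissibility of a two-image factor $\varphi(b_i)\varphi(b_{i+1})$ reduces to membership of $b_ib_{i+1}$ in $L$ at the previous level, which in turn has already been constrained. I plan to package this as the single lemma ``after $1$ or $2$, only $0$ can occur'' and apply it repeatedly at the level of $\varphi^{n-1}$ to control boundary extensions at the level of $\varphi^n$. Once this is set up, the case analysis is short and mechanical.
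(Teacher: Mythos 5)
Your proposal is correct, but it takes a genuinely different route from the paper. The paper's own proof is purely computational: it lists $L\cap A^3$ and $L\cap A^4$, reads off that $00$ and $10$ are the only right special words of length $2$ and that $000$ and $001$ are the only left special words of length $3$, together with their extension sets, and then concludes by the monotonicity of $\rext$ under taking suffixes and of $\lext$ under taking prefixes. You keep that same monotonicity-plus-speciality step at the end, but replace the brute-force enumeration by a structural analysis of $\varphi$: every image $\varphi(a)$ begins with $0$ and ends with $1$ or $2$, so occurrences of $1$ and $2$ sit at the ends of image blocks, and extensions across a block boundary are governed by the corresponding two-letter factor one level down (your ``after $1$ or $2$ only $0$ can occur'' lemma applied to $\varphi^{n-1}(a)$). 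This is essentially the cutting-point technique that the paper itself only introduces for the later steps (Lemma~\ref{l:cutting} and Steps~2--4), deployed here at small scale; the paper's version buys brevity and delegates correctness to an easily checkable finite computation, while yours buys a self-contained, software-free argument at the cost of more boundary bookkeeping. Two small points to tighten in the write-up: the equalities you assert along the way (e.g.\ $\rext(1)=\rext(2)=\{0\}$, $\lext(01)=\{0\}$, $\lext(02)=\{1\}$) are only established, and only needed, as inclusions, since the equalities in the claim follow from speciality once the inclusions are known; and your parenthetical justification that $2\notin\lext(001)$ (``combinations corresponding to $02$ and $00$ at the upper level'') is garbled as phrased --- the clean argument is that an occurrence of $001$ as $\varphi(2)$ is preceded by the last letter of $\varphi(b)$ with $b2\in L$, and $b\in\{1,2\}$ is excluded by your level-one analysis, leaving $b=0$ and hence the preceding letter $1$.
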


\begin{proof}[Proof of the claim]
    Direct computations reveal that:
    \begin{equation*}
        L\cap A^3 = \{ 000, 001, 010, 020, 100, 102, 200\}.
    \end{equation*}
    Hence, the only two right special factors in $L\cap A^2$ are $00$ and $10$, and they satisfy respectively:
    \begin{equation*}
        \rext(00) = \{0,1\},\qquad \rext(10) = \{0,2\}.
    \end{equation*}
    Since the sets $\rext(x)$ are weakly increasing under taking suffixes, the first part follows.

    Similarly, we find:
    \begin{equation*}
        L\cap A^4 = \{ 0001, 0010, 0100, 0102, 0200, 1000, 1001, 1020, 2000 \}.
    \end{equation*}
    Therefore $L\cap A^3$ contains only two left special factors, $000$ and $001$, satisfying respecitvely:
    \begin{equation*}
        \lext(000) = \{ 1,2 \},\qquad \lext(001) = \{ 0,1 \}.
    \end{equation*}
    Since the sets $\lext(x)$ are weakly increasing under taking prefixes, the second part follows as well.
\end{proof}

\subsection*{Step~\ref{step:001}}
We now know that all long enough bispecial factors must start with either $000$ or $001$, and end with either $00$ or $10$. We restrict the possibilities even further by proving the following claim: 
\begin{claim}
    The only four bispecial factors of $L$ starting with $001$ are:
    \begin{equation*}
        0010,\quad 00100,\quad 00100010,\quad 001000100010.
    \end{equation*}
\end{claim}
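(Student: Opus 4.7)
The plan is to desubstitute every bispecial factor starting with $001$ via the identity $001 = \varphi(2)$, and then to bound the resulting preimage length using a tension between two extension constraints coming from bispeciality. For any bispecial $x$ starting with $001$, the outcome will be a factorization $x = \varphi(2 y_0) \cdot p$ with $y_0$ and $p$ each constrained to a finite set; a short enumeration then leaves exactly the four claimed words.

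First I would exploit $1 \in \lext(x)$ (which holds by Step~\ref{step:classification} since $|x| \geq 3$). In $L$ the letter $1$ appears only as the last character of $\varphi(0) = 0001$ or $\varphi(2) = 001$, and since $22 \notin L$ rules out the $1$ terminating a $\varphi(2)$ adjacent to the $\varphi(2)$ that begins $x$, the initial $1$ of $1x$ must terminate a $\varphi(0)$, while the subsequent $001$ begins a $\varphi(2)$. This produces the factorization $x = \varphi(2 y_0) \cdot p$, where $y_0 \in A^*$ and $p$ is a nonempty proper prefix of some $\varphi(b)$ with $02 y_0 b \in L$; here $p$ is nonempty because $x$ must end in $0$, whereas $\varphi(2 y_0)$ ends in $1$ or $2$. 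The nonempty proper prefixes of the $\varphi(a)$ lie in $\{0, 00, 000\}$, so $p$ is restricted to this set. Independently, $0 \in \lext(x)$ yields $0x = \varphi(0 y_0) \cdot p$, hence $0 y_0 b' \in L$ for some $b'$ with $p$ a prefix of $\varphi(b')$.

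Next I would bound $|y_0|$, which I expect to be the main obstacle. Enumerating short factors, the only length-$(k{+}2)$ extensions of $02$ in $L$ for $k = 1, 2, 3$ are respectively $020$, $0200$, $02000$; hence $02 y_0 \in L$ forces the first three letters of $y_0$ (whenever they exist) to be $0$. On the other hand, $0 y_0 \in L$ combined with $y_0$ starting with $00$ demands $000 y_0^{(2)} \in L$, which forces $y_0^{(2)} = 1$ since $0000 \notin L$ (runs of $0$ in $L$ cap at length $3$). These two constraints clash at the third letter of $y_0$, giving $|y_0| \leq 2$ and hence $y_0 \in \{\emptyw, 0, 00\}$.

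Finally I would enumerate the admissible pairs $(y_0, p)$ and verify bispeciality of each candidate $x = \varphi(2 y_0) \cdot p$. A right-extension analysis rules out several of them: $p = 000$ always creates a forbidden run $0000$; the pair $(y_0, p) = (0, 00)$ yields $x \cdot 1 = 0010001001$, whose only possible desubstitutions would require $202$ or $002$ in $L$ (both forbidden); and for $y_0 = 00$ the constraint $000 b' \in L$ forces $b' = 1$ and thereby restricts $p$ to a prefix of $\varphi(1) = 02$, leaving only $p = 0$. The surviving pairs yield exactly the four words $0010$, $00100$, $00100010$, $001000100010$, each of which I would verify to be bispecial by exhibiting an occurrence as a factor of $\varphi(w)$ for a small $w \in L$ (for instance, $001000100010$ is a prefix of $\varphi(2000)$, with $2000 \in L$).
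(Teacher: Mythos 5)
Your proposal is correct and follows essentially the same route as the paper: both arguments desubstitute the candidate bispecial word through the cutting-point structure using its two left extensions $0$ and $1$, bound the length of the $\varphi$-preimage by the impossibility of a word of length at least $3$ having both $0$ and $2$ as left extensions, and finish with a finite enumeration plus explicit checks (including, for the one stubborn candidate, a desubstitution of $x1$ that parallels the paper's use of the right extension $u1$). The differences are only organizational: you parse $x$ itself with a trailing partial block $p$, thereby merging the paper's two end-cases ($A^*00$ and $A^*10$), and you re-derive the length bound by enumerating short factors extending $02$ and $0$ instead of quoting the left-special classification directly.
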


The proof of this claim makes use of the concept of \emph{cutting points}, by which we mean the following: in a word of the form $\varphi(z)$, a cutting point is an index $0\leq j\leq |\varphi(z)|-1$ such that $j = |\varphi(z_1)|$, for some prefix $z_1$ of $z$. We observe that in the specific case of $\varphi$, the cutting points are located exactly after the occurrences of the letters $1$ and $2$. The following elementary lemma  will be useful.

\begin{lemma}\label{l:cutting}
    Let $\varphi(z) = u_1\dots u_n$ be a factorization such that $|u_1\dots u_k|$ is a cutting point for all $1\leq k<n$. Then there is a factorization $z = z_1\dots z_n$ such that $\varphi(z_i) = u_i$ for all $1\leq i\leq n$.
\end{lemma}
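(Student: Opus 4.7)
The plan is to exploit the fact that $\varphi$ is non-erasing ($|\varphi(a)|\geq 2$ for every letter $a$) so that the length function $z_1 \mapsto |\varphi(z_1)|$ is strictly increasing on prefixes of $z$. This will turn the hypothesis "$|u_1\cdots u_k|$ is a cutting point'' into the existence of a unique prefix $z_{(k)}$ of $z$ with $|\varphi(z_{(k)})|=|u_1\cdots u_k|$, and these prefixes will form a chain that yields the desired factorization of $z$ by successive differences.

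Concretely, I would proceed as follows. First, by definition of cutting points, for each $1\leq k<n$ choose a prefix $z_{(k)}$ of $z$ with $|\varphi(z_{(k)})|=|u_1\cdots u_k|$; set $z_{(0)}=\emptyw$ and $z_{(n)}=z$. Since the quantities $|u_1\cdots u_k|$ are strictly increasing in $k$, and since $|\varphi(\cdot)|$ is strictly increasing on the chain of prefixes of $z$, the $z_{(k)}$ form a chain of nested prefixes $z_{(0)}\leq z_{(1)}\leq\cdots\leq z_{(n)}$. Define $z_i$ to be the unique word satisfying $z_{(i)}=z_{(i-1)}z_i$; then $z=z_1\cdots z_n$.

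It remains to verify $\varphi(z_i)=u_i$. Since $\varphi$ is a morphism, $\varphi(z_{(i)})=\varphi(z_{(i-1)})\varphi(z_i)$. On the other hand, from the definitions we have $|\varphi(z_{(i)})|=|u_1\cdots u_i|$ and the word $\varphi(z_{(i)})$ is a prefix of $\varphi(z)=u_1\cdots u_n$, so $\varphi(z_{(i)})=u_1\cdots u_i$. Cancelling $\varphi(z_{(i-1)})=u_1\cdots u_{i-1}$ on the left gives $\varphi(z_i)=u_i$, as required.

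The statement is essentially a bookkeeping observation, so I do not expect any substantial obstacle; the only minor point requiring care is that the prefix $z_{(k)}$ singled out by the cutting point condition is genuinely determined by its length, which uses non-erasingness of $\varphi$ to ensure the length map is injective on prefixes of $z$.
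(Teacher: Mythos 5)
Your proof is correct, but it takes a different route from the paper. The paper's argument works downstream, in $\varphi(z)$: since each $u_1\cdots u_k$ (being the prefix of $\varphi(z)$ whose length is a cutting point) lies in $\varphi(A^*)$, and since $\varphi(A)$ is a prefix code (a fact recorded just before the lemma), the set $\varphi(A)^*$ is left-unitary, so each block $u_j$ itself lies in $\varphi(A^*)$; writing $u_j=\varphi(z_j)$ and invoking injectivity of $\varphi$ then gives $z=z_1\cdots z_n$. You instead work upstream, in $z$: non-erasingness makes $p\mapsto|\varphi(p)|$ strictly increasing on the (totally ordered) prefixes of $z$, so the prefixes $z_{(k)}$ witnessing the cutting points are forced to be nested, and the factorization of $z$ is obtained by successive differences, with $\varphi(z_i)=u_i$ following by cancellation in the free monoid. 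Your version needs neither the prefix-code property nor injectivity of $\varphi$ (only non-erasingness), so it is slightly more elementary and would apply to any non-erasing substitution with the paper's definition of cutting points; the paper's version is shorter given that the prefix-code observation is already on the table, and it packages the key structural fact (left-unitarity of $\varphi(A)^*$) that motivates why cutting points are useful elsewhere in the section. Your only caveat — that $z_{(k)}$ is determined by its length — is indeed exactly where non-erasingness enters, and you identified it correctly (for this $\varphi$ one even has $|\varphi(a)|\geq 2$, though $\geq 1$ suffices).
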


\begin{proof}
    By assumption, $u_1\dots u_{j-1}, u_1\dots u_j\in\varphi(A^*)$ for all $1\leq j\leq n$. Since $\varphi(A)$ is a prefix code, we have $u_j\in\varphi(A^*)$ and we may write $u_j = \varphi(z_j)$ for some $z_j$. Then $\varphi(z)=\varphi(z_1\dots z_n)$ and as $\varphi$ is injective, $z = z_1\dots z_n$ as required.
\end{proof}

\begin{proof}[Proof of the claim]
    Let us start by noting that the only bispecial factors of $L$ with length at most 4 are:
    \begin{equation*}
        \emptyw,\quad 0,\quad 00,\quad 0010.
    \end{equation*}
    This can be proven simply by inspecting the sets $L\cap A^k$ for $2\leq k\leq 6$. The extension graphs of these four words can be found in Figure~\ref{f:ext-bisp-lt4}. From now on, we work only with bispecial factors of length at least 5.
    \begin{figure}\centering
        \begin{minipage}[b]{.23\linewidth}\centering
            \includegraphics[scale=.4]{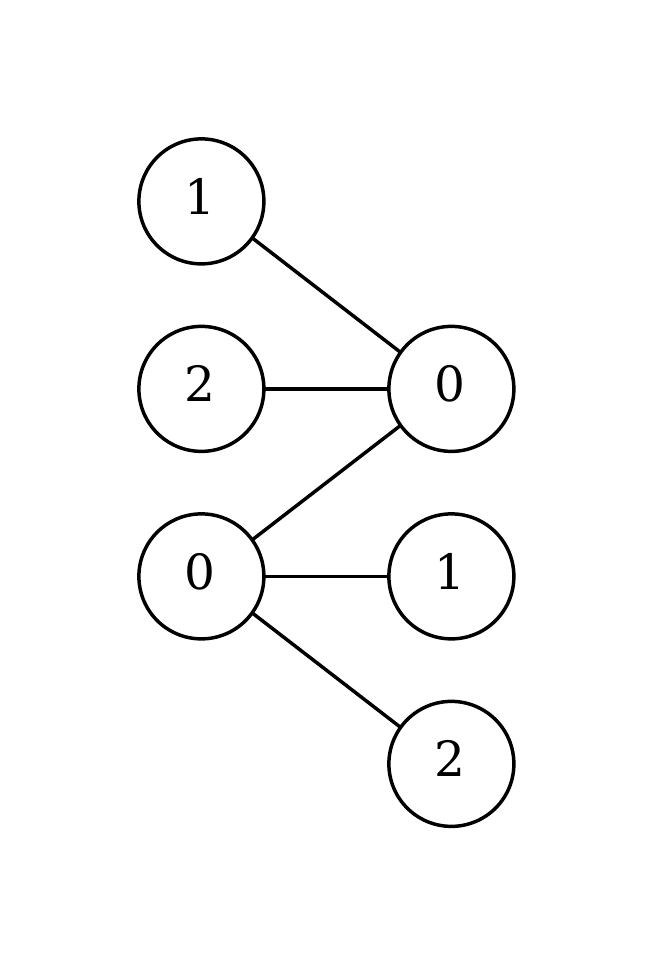}\\
            {\small$\ext(\emptyw)$}
        \end{minipage}
        \begin{minipage}[b]{.23\linewidth}\centering
            \includegraphics[scale=.4]{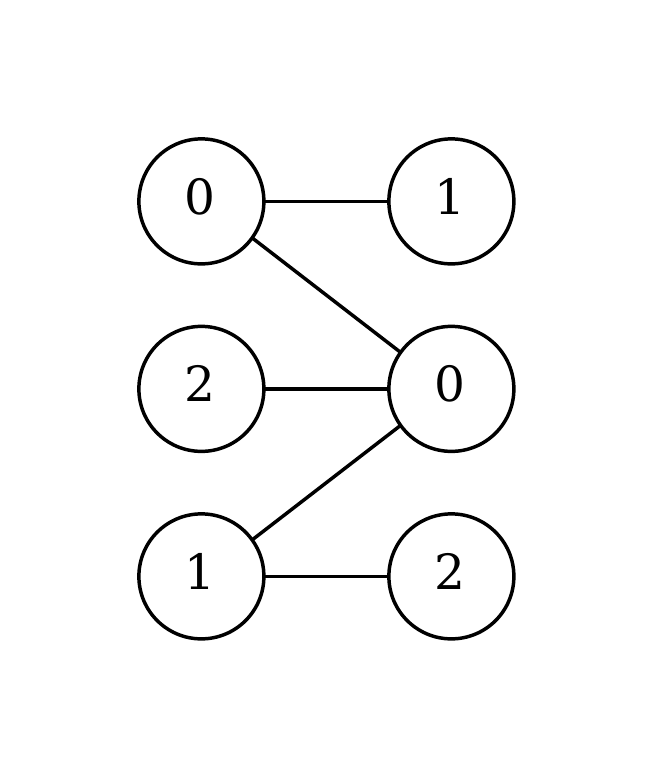}\\
            {\small$\ext(0)$}
        \end{minipage}
        \begin{minipage}[b]{.23\linewidth}\centering
            \includegraphics[scale=.4]{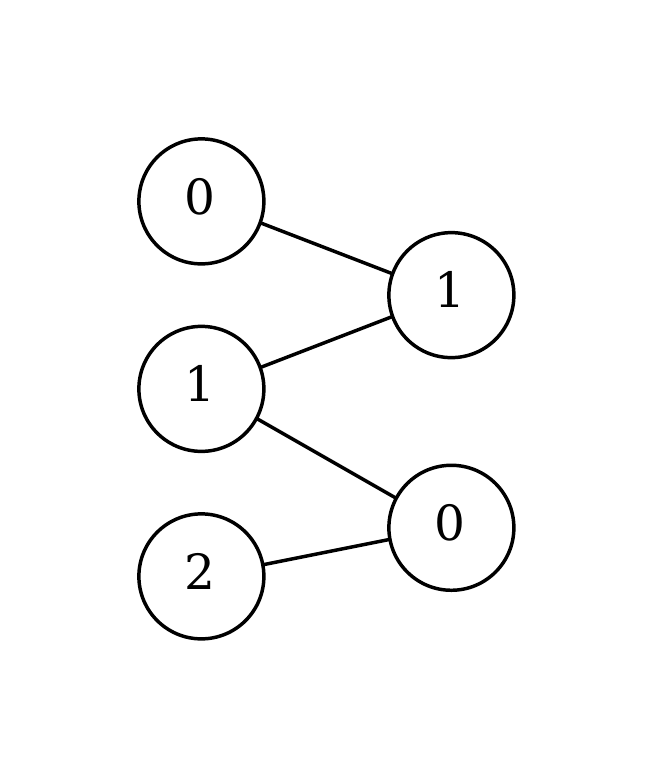}\\
            {\small$\ext(00)$}
        \end{minipage}
        \begin{minipage}[b]{.23\linewidth}\centering
            \includegraphics[scale=.4]{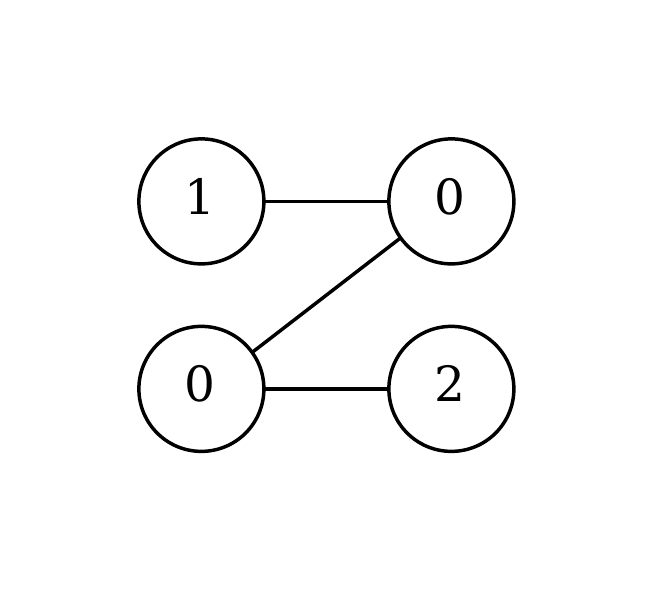}\\
            {\small$\ext(0010)$}
        \end{minipage}
        \caption{Extension graphs of all the bispecial words of length at most 4 in $L$.}\label{f:ext-bisp-lt4}
    \end{figure}

    Let us suppose that $u$ is bispecial, $|u|\geq 5$ and $001$ is a prefix of $u$. We distinguish two cases: $u = 001x10$ and $u = 001x00$.
    
    We start by the case $u=001x10$. Let $x' = x1$. By Step~\ref{step:classification}, we know that $0u, 1u\in L$. Thus, there exist $z_1, z_2\in L$ such that:
    \begin{equation*}
        \varphi(z_1) = s_10001x'0t_1,\qquad \varphi(z_2) = s_21001x'0t_2.
    \end{equation*}
    
    Since $0000\notin L$, it follows that $s_1$ ends with either 1 or 2. Therefore $|s_1|$ is a cutting point in $\varphi(z_1)$. Similarly, there is a cutting point in $\varphi(z_1)$ at the end of $x'$. It follows from Lemma~\ref{l:cutting} that $z_1$ has a factor of the form $0y_1$ such that $\varphi(y_1)=x'$. With similar arguments, we conclude that $z_2$ has a factor of the form $2y_2$ such that $\varphi(y_2)=x'$. Since $\varphi$ is injective, we find $y_1=y_2=y$, and $\lext(y) \supseteq\{0,2\}$. By Step~\ref{step:classification}, it follows that $|y|<3$, which leaves us with only thirteen possibilities. Further accounting for the fact that $2\in\lext(y)$ and $\varphi(y)\neq\emptyw$, we narrow it down to only two possibilities, namely $y=0$ and $y=00$. Trying out both values, we obtain either:
    \begin{align*}
        u &= 001\varphi(0)0 = 00100010; \text{ or}\\
        u &= 001\varphi(00)0 = 001000100010.
    \end{align*}
    A direct computation shows that both of those words are bispecial.

    Finally, we treat the case $u=001x00$. By Step~\ref{step:classification}, $u0 = 001x000\in L$. Since $0000\notin L$, it follows that $x$ cannot end with $0$. Moreover, we also have $0u, 1u, u1\in L$, so there exist $z_1, z_2, z_3\in L$ such that:
    \begin{equation*}
        \varphi(z_1) = s_10001x00t_1,\quad \varphi(z_2) = s_21001x00t_2,\quad \varphi(z_3) = s_3001x001t_3.
    \end{equation*}
    Again, since $0000\notin L$, $s_1$ cannot end with $0$. Recalling that cutting points are located exactly after the occurrences of 1 or 2, we apply Lemma~\ref{l:cutting} to conclude that there exist: a factor of $z_1$ of the form $0y_1$ such that $\varphi(y_1)=x$; a factor of $z_2$ of the form $2y_2$ such that $\varphi(y_2)=x$; and a factor of $z_3$ of the form $y_32$ such that $\varphi(y_3)=x$. Since $\varphi$ is injective, $y_1=y_2=y_3=y$, and $\lext(y)\supseteq\{0,2\}$, $2\in\rext(y)$. By Step~\ref{step:classification}, we conclude that $|y|<3$, which again leaves us with thirteen possible values for $y$. Accounting for the fact that $2\in\rext(y)$ and $2\in\lext(y)$ narrows this to only two possibilities: $y = \emptyw$ and $y = 0$. Testing both possibilities, we find that $y=0$ does not yield a bispecial factor, leaving us with only one bispecial factor for that case:
    \begin{equation*}
        u=001\varphi(\emptyw)00 = 00100.
    \end{equation*}

    All in all, we exhausted all cases and found four bispecial factors:
    \begin{equation*}
        0010, 00100, 00100010, 001000100010.
    \end{equation*}
    This proves the claim.
\end{proof}

We give the extension graphs of these four bispecial factors in Figure~\ref{f:ext-001}. The longest among these, which has length 12, is the only one which is disconnected. We also saw that all the bispecial factors of $L$ of length at most 4 are connected, and it is not hard from there to complete the picture and show that $001000100010$ is both the \emph{longest bispecial factor starting with 001} and the \emph{smallest disconnected factor} of $L$. This can be done by explicit computations for the only three missing bispecial factors of length at most 12, which are $00010$, $000100$ and $000100010$.
\begin{figure}\centering
    \begin{minipage}[b]{0.23\linewidth}\centering
        \includegraphics[scale=.4]{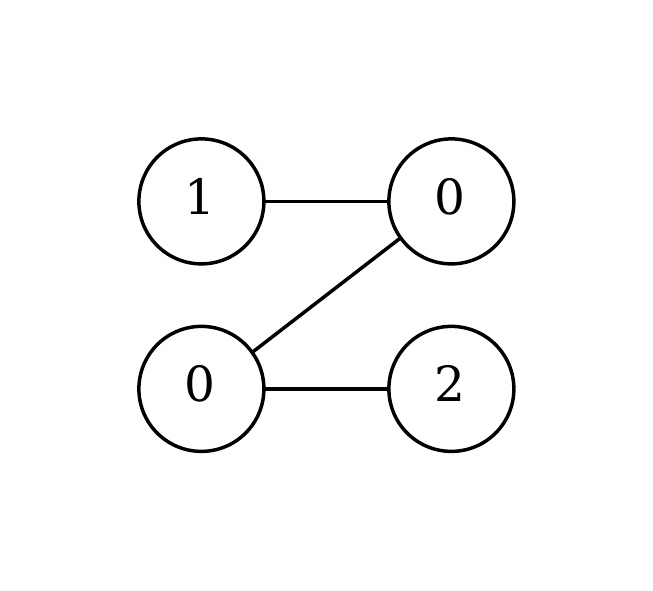}\\
        {\small$\ext(0010)$}
    \end{minipage}
    \begin{minipage}[b]{0.23\linewidth}\centering
        \includegraphics[scale=.4]{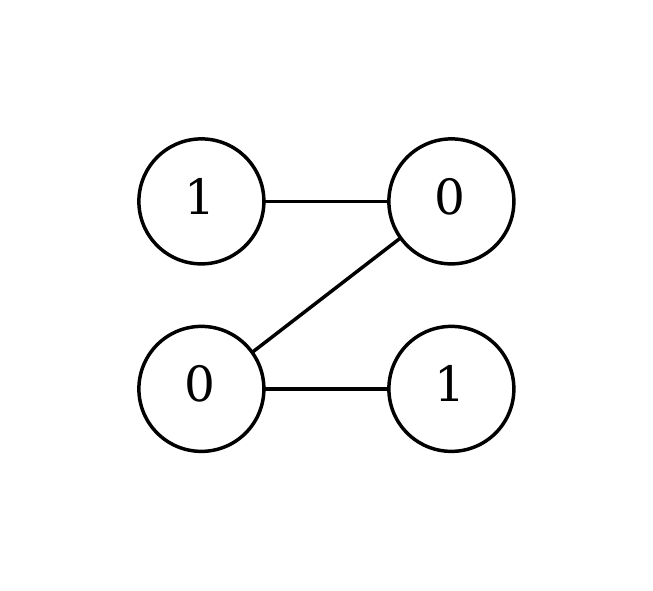}\\
        {\small$\ext(00100)$}
    \end{minipage}
    \begin{minipage}[b]{0.23\linewidth}\centering
        \includegraphics[scale=.4]{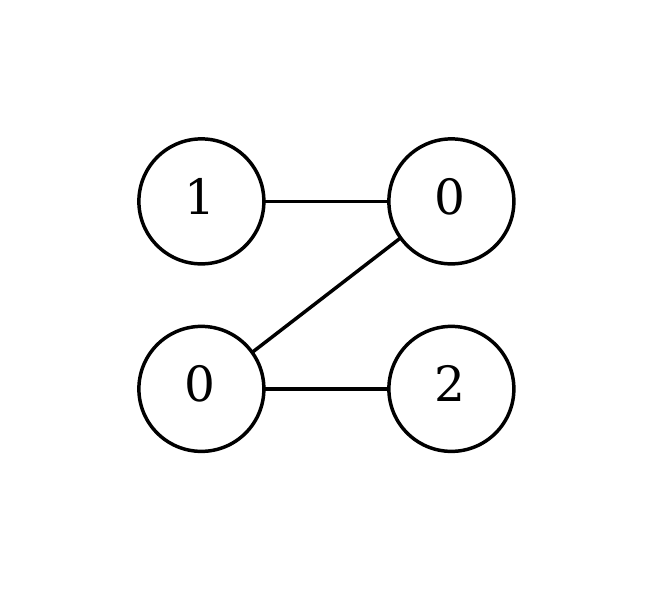}\\
        {\small$\ext(00100010)$}
    \end{minipage}
    \begin{minipage}[b]{0.23\linewidth}\centering
        \includegraphics[scale=.4]{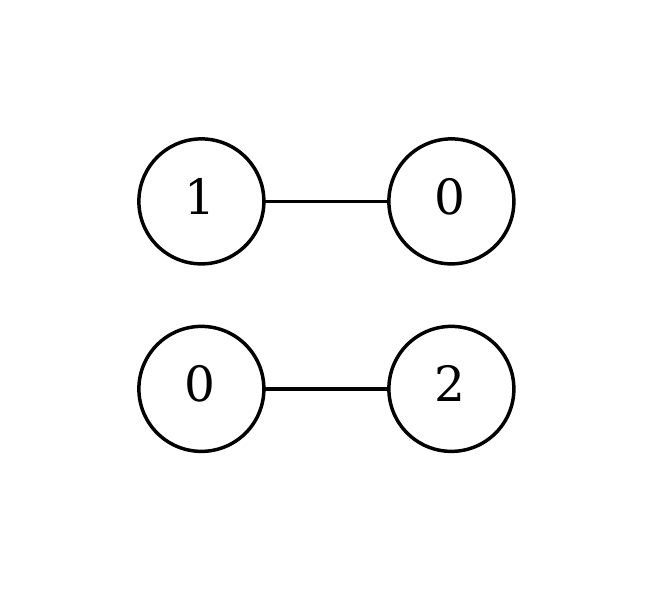}\\
        {\small$\ext(001000100010)$}
    \end{minipage}
    \caption{Extension graphs of all bispecial factors of $L$ starting with 001.}\label{f:ext-001}
\end{figure}

\subsection*{Step~\ref{step:stability}}
Next, we give conditions ensuring stability of some extension graphs under $x\mapsto \varphi(x)0$ or $x\mapsto \varphi(x)00$. 
\begin{claim}
    Let $x$ be a bispecial factor of $L$ starting with $000$. Then:
    \begin{equation*}
        \ext(x) \isom \begin{cases}
            \ext(\varphi(x)0) & \text{ if } x\in A^*00\\
            \ext(\varphi(x)00) & \text{ if } x\in A^*10.
        \end{cases}
    \end{equation*}
\end{claim}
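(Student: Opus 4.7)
My plan is to build an explicit graph isomorphism $\phi\colon\ext(x)\to\ext(y)$, where $y=\varphi(x)0$ in Case~1 and $y=\varphi(x)00$ in Case~2, using the substitution $\varphi$ together with the vertex classification in Step~\ref{step:classification}.

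The first step is to pin down the vertex sets. Since $x$ starts with $000$, Step~\ref{step:classification} gives $\lext(x)=\{1,2\}$; in Case~1, $\rext(x)=\{0,1\}$, and in Case~2, $\rext(x)=\{0,2\}$. The word $y$ begins with $\varphi(0)=0001$, so it starts with $000$ and $\lext(y)=\{1,2\}$; in Case~1, $y$ ends with $\varphi(0)0=00010$ giving $\rext(y)=\{0,2\}$, while in Case~2, $y$ ends with $\varphi(0)00=000100$ giving $\rext(y)=\{0,1\}$. I then define $\phi$ on the left by $a\mapsto\ell(a):=$ last letter of $\varphi(a)$ (so $1\mapsto 2$, $2\mapsto 1$), and on the right by $b\mapsto$ second letter of $\varphi(b)$ in Case~1 (giving $0\mapsto 0$, $1\mapsto 2$) or third letter of $\varphi(b)$ in Case~2 (giving $0\mapsto 0$, $2\mapsto 1$). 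Direct inspection of $\varphi$ shows both restrictions are bijections onto the corresponding sides of $\ext(y)$.

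To see $\phi$ preserves edges in the forward direction, take $(a,b)\in\ext(x)$, i.e.\ $axb\in L$. Then $\varphi(axb)=\varphi(a)\varphi(x)\varphi(b)\in L$; since $\varphi(a)$ ends in $\phi(a)$ and $\varphi(b)$ begins with $0\phi(b)$ in Case~1 (resp.\ $00\phi(b)$ in Case~2), the factor $\phi(a)\,y\,\phi(b)$ lies in $L$, witnessing an edge of $\ext(y)$. For the converse, I would take an edge $(\alpha,\beta)\in\ext(y)$ and desubstitute: since $\alpha y\beta\in L$, there exists $w\in L$ such that $\alpha y\beta$ is a factor of $\varphi(w)$. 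Because every $1$ or $2$ in $\varphi(w)$ sits at the end of a $\varphi$-block, the position just after $\alpha$ is a cutting point; the prefix code property of $\varphi(A)$ combined with Lemma~\ref{l:cutting} splits $\varphi(x)$ as $\varphi(c_1)\cdots\varphi(c_m)$, so by injectivity of $\varphi$, $x=c_1\cdots c_m$, while $\alpha$ is the last letter of a preceding block $\varphi(c_0)$ and $0\beta$ (resp.\ $00\beta$) is the relevant prefix of a following block $\varphi(c_{m+1})$. The triple $c_0 x c_{m+1}$ is a factor of $w$, hence an edge of $\ext(x)$ mapping to $(\alpha,\beta)$.

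The main obstacle is that the pre-images of $\alpha=1$ (under $\ell$) and of $\beta=0$ in Case~1 (under the right-side map) are not a priori unique: $\alpha=1$ could come from $\varphi(0)$ or $\varphi(2)$, and similarly $\beta=0$ could come from $\varphi(0)$ or $\varphi(2)$. This apparent ambiguity is resolved by the structural constraints on $x$: the choice $c_0=0$ would force $0000$ to be a factor of $L$ (since $x$ starts with $000$), which is impossible, so $c_0=2$; and in Case~1 the ending of $x$ with $00$ together with Step~\ref{step:classification} ($\rext(x)=\{0,1\}$, while $002\notin L$) forces $c_{m+1}=0$ rather than $2$. Injectivity of $\phi$ on edges then follows from the injectivity of $\ell$ and of the right-side map on the vertex sets, completing the isomorphism.
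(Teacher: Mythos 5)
Your proposal is correct and follows essentially the same route as the paper: your vertex maps (last letter of $\varphi(a)$ on the left, second or third letter of $\varphi(b)$ on the right) are exactly the paper's permutation $\sigma$ exchanging $1$ and $2$, the forward direction is the same application of $\varphi$, and the converse is the same desubstitution through cutting points (Lemma~\ref{l:cutting}) and injectivity of $\varphi$, with the preimage ambiguities settled by the Step~\ref{step:classification} classification just as in the paper's case-by-case analysis.
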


\begin{proof}[Proof of the claim]
    Let us put:
    \begin{equation*}
        y = \begin{cases}
            \varphi(x)0 & \text{ if } x\in A^*00\\
            \varphi(x)00 & \text{ if } x\in A^*10.
        \end{cases}
    \end{equation*}
    Let $\sigma$ be the permutation of $A = \{ 0,1,2 \}$ fixing 0 and exchanging 1, 2. We readily deduce from Step~\ref{step:classification} that $\sigma(a)$ is a suffix of $\varphi(a)$, for all $a\in\lext(x)$. Similarly, if $x\in A^*00$, then $0\sigma(b)$ is a prefix of $\varphi(b)$, for all $b\in\rext(x)$; and if $x\in A^*10$, then $00\sigma(b)$ is a prefix of $\varphi(b)$, for all $b\in\rext(x)$. In particular, $\sigma(a)y\sigma(b)$ is a factor of $\varphi(axb)$. Thus,
    \begin{equation*}
        axb\in L \implies \varphi(axb)\in L \implies \sigma(a)y\sigma(b)\in L.
    \end{equation*}
    This shows that $\sigma\colon\ext(x)\to\ext(y)$ is a graph morphism, and that $y$ is bispecial. Moreover, it follows from Step~\ref{step:classification} that $\sigma$ is bijective on vertices. It remains only to show that $\sigma$ is onto on edges.

    Let us suppose that $\sigma(a)y\sigma(b)\in L$ for $a,b\in A$. We need to show that $axb\in L$. The fact that $\sigma(a)y\sigma(b)\in L$ implies:
    \begin{equation*}
        \exists z\in L, \ \varphi(z) = s\sigma(a)y\sigma(b)t.
    \end{equation*}
    Note that $y = \varphi(x)0$ or $\varphi(x)00$. In both cases, $\varphi(x)$ is a prefix of $y$ ending with 1 or 2. Moreover, we have $\sigma(a)\in\{1,2\}$ by Step~\ref{step:classification}. Thus, in the factorization $s\sigma(a)y\sigma(b)t$ given above, there must be one cutting point at the start of $y$, and one at the end of $\varphi(x)$. By Lemma~\ref{l:cutting}, this implies that $z$ has a factor of the form $cx'd$, where:
    \begin{enumerate}
        \item $\varphi(x') = \varphi(x)$;
        \item $\sigma(a)$ is a suffix of $\varphi(c)$;
        \item $\varphi(d)$ starts with $0\sigma(b)$ if $x\in A^*00$; or $00\sigma(b)$ if $x\in A^*10$.
    \end{enumerate}
    Injectivity of $\varphi$ implies $x = x'$, so $c\in\lext(x)$ and $d\in\rext(x)$. Using Step~\ref{step:classification}, we can then deduce from a case-by-case analysis that $a=c$ and $b=d$. This proves that $\sigma$ is onto on edges, thus finishing the proof.
\end{proof}

\subsection*{Step~\ref{step:wk}}
Recall that the longest bispecial factor of $L$ that starts with 001 is also its smallest disconnected element. We will see that \emph{all} disconnected elements of $L$ arise from this word. Consider the sequence of words $\{w_k\}_{k\in\nn}$ defined by:
\begin{equation*}
    w_0 = 001000100010, \qquad w_{k+1} =
    \begin{cases}
        \varphi(w_k)00 & \text{if $k$ is even};\\
        \varphi(w_k)0 & \text{if $k$ is odd}.
    \end{cases}
\end{equation*}

For the purpose of the proof below, it is useful to notice that $w_k$ ends with $10$ if $k$ is even, and with $00$ if $k$ is odd. We now prove the following claim.
\begin{claim}
   A word $w\in L$ is disconnected if and only if $w = w_k$ for some $k\in\nn$.  
\end{claim}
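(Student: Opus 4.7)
The plan is to prove the two implications separately. The forward direction is a short induction driven by Step~\ref{step:stability}; the backward direction reduces to a desubstitution lemma that is the main obstacle.

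\textbf{Forward direction.} I argue by induction on $k$. The base case is already in hand: at the end of Step~\ref{step:001} it is noted that $w_0$ is disconnected (see the rightmost graph of Figure~\ref{f:ext-001}). For $k \geq 1$, the word $w_k$ begins with $\varphi(0) = 0001$, hence with $000$; a quick parity check shows that $w_k$ ends with $10$ when $k$ is even and with $00$ when $k$ is odd, which matches exactly the two cases of Step~\ref{step:stability}. Applying that step to $x = w_k$ yields $\ext(w_k) \isom \ext(w_{k+1})$, and disconnectedness transfers across the isomorphism. The single transition $w_0 \to w_1$ is not directly covered since $w_0$ starts with $001$, but the proof of Step~\ref{step:stability} adapts once the permutation $\sigma$ is replaced on the left by the bijection $\lext(w_0) = \{0,1\} \to \{1,2\} = \lext(w_1)$ sending each $a$ to the last letter of $\varphi(a)$; the required suffix/prefix compatibilities remain routine for these specific letters.

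\textbf{Backward direction.} Let $w \in L$ be disconnected; then $w$ is bispecial by the remark in Section~\ref{s:connectedness}. I proceed by strong induction on $|w|$. If $|w| \leq 12$, then Steps~\ref{step:classification} and~\ref{step:001} together with the discussion after Step~\ref{step:001} produce the explicit finite list of bispecial factors of $L$ of length at most~$12$, namely $\emptyw, 0, 00, 0010, 00010, 00100, 000100, 00100010, 000100010, 001000100010$; direct inspection of their extension graphs (Figures~\ref{f:ext-bisp-lt4} and~\ref{f:ext-001}) shows that $w_0$ is the only disconnected one, so $w = w_0$. If $|w| > 12$, Step~\ref{step:001} excludes $w$ starting with $001$, so Step~\ref{step:classification} forces $w$ to start with $000$ and to end either with $00$ or with $10$. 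The key intermediate statement is then a desubstitution lemma: \emph{there exists a bispecial factor $x \in L$ with $|x|<|w|$ such that $w = \varphi(x) \cdot 0$ and $x$ ends with $00$ when $w$ ends with $10$, and $w = \varphi(x) \cdot 00$ and $x$ ends with $10$ when $w$ ends with $00$.}

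To establish this lemma I would locate cutting points at both ends of $w$, as in Step~\ref{step:001} and Lemma~\ref{l:cutting}. Since $2 \in \lext(w)$ and the letter $2$ only appears in $L$ as the terminal letter of $\varphi(1) = 02$, any $\varphi(z)$ containing $2w$ has a cutting point just before $w$. For the right end, I would inspect $w1$ (when $w$ ends with $00$) or $w2$ (when $w$ ends with $10$); because $1$ and $2$ each terminate a very restricted set of $\varphi$-blocks, a short case analysis singles out a unique cutting point at position $|w|-2$ or $|w|-1$, and Lemma~\ref{l:cutting} then supplies $x \in L$. Bispeciality of $x$ is inherited from that of $w$: each of $1, 2 \in \lext(w)$ lifts to a distinct letter of $\lext(x)$ (for instance $2$ lifts to $1$, while $1$ lifts to $0$ or $2$), and symmetrically for $\rext(x)$. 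The ``wrong'' ending of $x$ is ruled out by bispeciality of $w$ itself: for example, in the case $w \in A^*00$, if $x$ also ended with $00$ then Step~\ref{step:classification} would give $2 \notin \rext(x)$, whereas $1 \in \rext(w)$ would force $w1 = \varphi(x) \cdot 001 \in L$, which after cutting-point analysis requires $x2 \in L$, a contradiction; the symmetric argument handles $w \in A^*10$. With the correct ending in hand, Step~\ref{step:stability} gives $\ext(x) \isom \ext(w)$, so $x$ is disconnected; the induction hypothesis yields $x = w_j$ for some $j$, and the parity match forces $w = w_{j+1}$. The main obstacle is precisely this desubstitution lemma, and within it the elimination of the wrong ending of $x$, which is what guarantees that Step~\ref{step:stability} can be invoked and the induction can close.
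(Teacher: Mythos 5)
Your proposal is correct and follows essentially the same route as the paper: strong induction on $|w|$, desubstitution via cutting points (Lemma~\ref{l:cutting}) producing a shorter bispecial word $x$ with $w=\varphi(x)0$ or $w=\varphi(x)00$, elimination of the wrong ending of $x$ through Step~\ref{step:classification}, transfer of disconnectedness via the isomorphism of Step~\ref{step:stability}, and a parity check identifying $w$ as $w_{k+1}$. The only divergence is the transition $w_0\to w_1$ in the forward direction, which the paper settles by a direct computation of $\ext(w_1)$ whereas you adapt the morphism argument of Step~\ref{step:stability} with the left bijection $0\mapsto 1$, $1\mapsto 2$; both work.
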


\begin{proof}
    We already saw that $w_0$ is disconnected, and one can check via explicit computations that so is $w_1$. Since $w_k$ starts with $000$ whenever $k\geq 1$, it follows from Step~\ref{step:stability} that $w_k$ is disconnected for all $k\in\nn$.

    For the converse, we proceed by induction on $|w|$. The smallest disconnected word, $w_0$, provides the basis for the induction. Let us consider a disconnected word $w\in L$ such that $|w|>|w_0|=12$. Since $w_0$ is also the longest bispecial factor starting with $001$ (see Step~\ref{step:001}), we may assume that $w$ starts with $000$. 

    We start by treating the case $w\in A^*00$. By Step~\ref{step:classification}, we know that:
    \begin{equation*}
        \lext(w) = \{1,2\},\quad \rext(w) = \{0,1\}.
    \end{equation*}
    Let us write $w = w'00$. Since $w0\in L$, it follows that $w'$ cannot end with $0$. Let us consider $z_1, z_2, z_3, z_4\in L$ such that:
    \begin{equation*}
        \varphi(z_1)=s_12wt_1,\quad \varphi(z_2)=s_21wt_2,\quad \varphi(z_3)=s_3w0t_3,\quad \varphi(z_4)=s_4w1t_4.
    \end{equation*}
    By repeatedly applying Lemma~\ref{l:cutting}, we deduce that: 
    \begin{itemize}
        \item $z_1$ has a factor of the form $1x_1$ such that $\varphi(x_1) = w'$.
        \item $z_2$ has a factor of the form $ax_2$ such that $\varphi(x_2) = w'$ and $a\in\{0, 2\}$.
        \item $z_3$ has a factor of the form $x_30$ such that $\varphi(x_3) = w'$.
        \item $z_4$ has a factor of the form $x_42$ such that $\varphi(x_4) = w'$.
    \end{itemize}
    Since $\varphi$ is injective, we deduce $x_1=x_2=x_3=x_4=x$, and $x$ is bispecial. Note that $|x|\leq 2$ would imply $|w| = |\varphi(x)|+2 \leq 10$, which is a contradiction. Thus, we may assume $|x|\geq 3$. Since $2$ is a right extension of $x$, we deduce by Step~\ref{step:classification} that $x\in A^*10$. By Step~\ref{step:wk}, $\ext(x) \isom \ext(\varphi(x)00) = \ext(w)$; thus, $x$ is disconnected. By induction, $x = w_k$ for some $k\in\nn$, and since $x$ ends with $10$, $k$ is even. Therefore $w = \varphi(w_k)00 = w_{k+1}$.

    The case $w\in A^*10$ is handled in a similar fashion. Let us go quickly over the argument. This time, we have
    \begin{equation*}
        \lext(w) = \{1,2\},\quad \rext(w) = \{0,2\}.
    \end{equation*}
    We write $w = w'0$. Take $z_1, z_2, z_3, z_4\in L$ such that:
    \begin{equation*}
        \varphi(z_1)=s_12wt_1,\quad \varphi(z_2)=s_21wt_2,\quad \varphi(z_3)=s_3w0t_3,\quad \varphi(z_4)=s_4w2t_4.
    \end{equation*}
    Again, it follows from Lemma~\ref{l:cutting} that: 
    \begin{itemize}
        \item $z_1$ has a factor of the form $1x_1$ such that $\varphi(x_1) = w'$.
        \item $z_2$ has a factor of the form $ax_2$ such that $\varphi(x_2) = w'$ and $a\in\{0,2\}$.
        \item $z_3$ has a factor of the form $x_3a$ such that $\varphi(x_3) = w'$ and $a\in\{0,2\}$.
        \item $z_4$ has a factor of the form $x_41$ such that $\varphi(x_4) = w'$.
    \end{itemize}
    By injectivity of $\varphi$, we have $x_1=x_2=x_3=x_4=x$ and $x$ is bispecial. Moreover, $|x|\leq 2$ would imply $|w| = |\varphi(x)|+1 \leq 9$, which contradicts our standing assumption that $|w|>12$. Thus, we may assume $|x|\geq 3$. Since $1$ is a right extension of $x$, it follows from Step~\ref{step:classification} that $x$ ends with $00$, so by Step~\ref{step:stability}, $\ext(x) \isom \ext(\varphi(x)0) = \ext(w)$. This implies that $x$ is disconnected, so by induction $x = w_k$ for some $k\in\nn$. As $x$ ends with $00$, $k$ is odd and $w = \varphi(w_k)0 = w_{k+1}$.
\end{proof}

This, combined with the graph isomorphism identified in Step~\ref{step:stability}, allows us to explicitly compute the extension graphs of all the disconnected words of $L$. These extension graphs are shown in Figure~\ref{f:ext-disconnected}.
\begin{figure}\centering
    \begin{minipage}[b]{.23\linewidth}\centering
        \includegraphics[scale=.4]{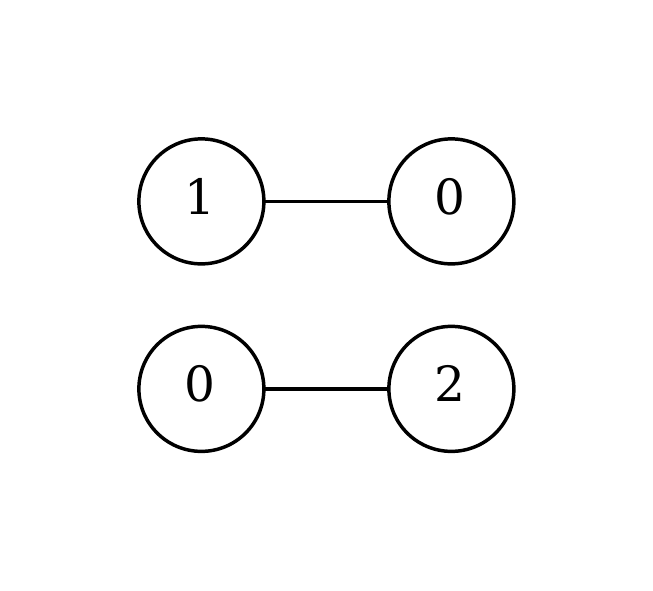}\\
        {\small$\ext(w_0)$}
    \end{minipage}
    \begin{minipage}[b]{.23\linewidth}\centering
        \includegraphics[scale=.4]{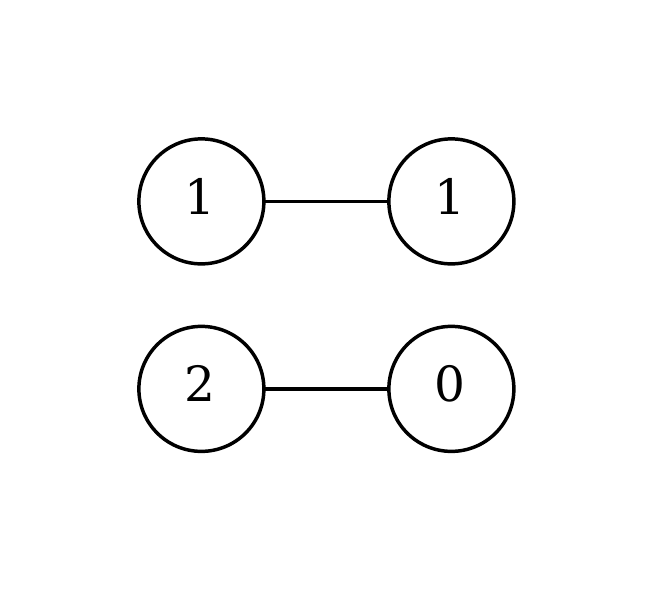}\\
        {\small$\ext(w_k)$, $k\geq 1$ odd}
    \end{minipage}
    \begin{minipage}[b]{.23\linewidth}\centering
        \includegraphics[scale=.4]{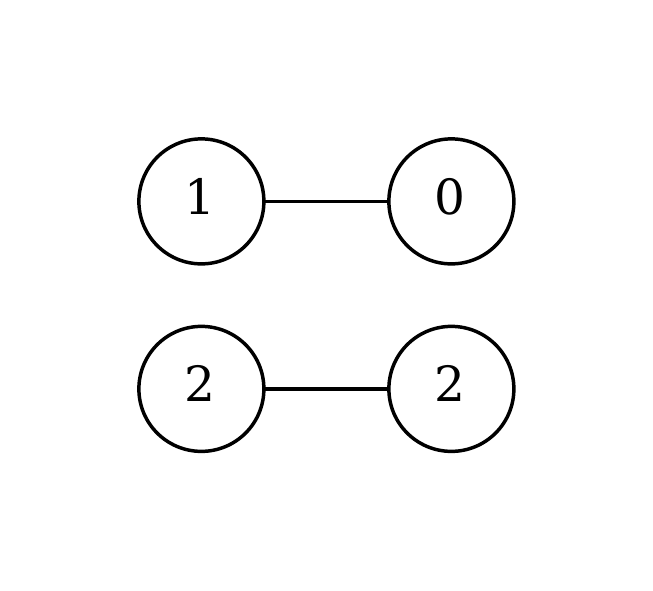}\\
        {\small$\ext(w_k)$, $k\geq 1$ even}
    \end{minipage}
    \caption{Extension graphs of the disconnected words of $L$.}\label{f:ext-disconnected}
\end{figure}

\subsection*{Step~\ref{step:conclude}}
Now that we know exactly which are the disconnected words of $L$, it remains to show that these words are suffix-connected. For $k\in\nn$, let us write $d_k = |\varphi^k(001)|+1$ and $y_k = \tail^{d_k-1}(w_k)$. This means that $w_k = \varphi^k(001)y_k$ and the depth $d_k$ suffix extension graph of $w_k$ is precisely $\ext_{d_k,d_k}(y_k)$. For the case $k=0$, we have $d_0=4$ and $y_0=000100010$. Notably, the depth~4 suffix extension graph of $w_0$, which is shown in Figure~\ref{f:suff-ext-w0}, is connected, hence $w_0$ is suffix-connected at depth~4. We will show that $w_k$ is suffix-connected at depth $d_k$ for all $k\in\nn$. 

Let us first note that the natural embedding of $\lext(w_k)$ in $\ext_{d_k,d_k}(y_k)$ is given by right multiplication by $\varphi^k(001)$. Before concluding the proof of Theorem~\ref{t:example}, we need to establish the following technical lemma, which gives some properties of the words $x_k = \init(\varphi^k(2))$.

\begin{figure}\centering
    \begin{minipage}[b]{.23\linewidth}\centering
        \includegraphics[scale=.4]{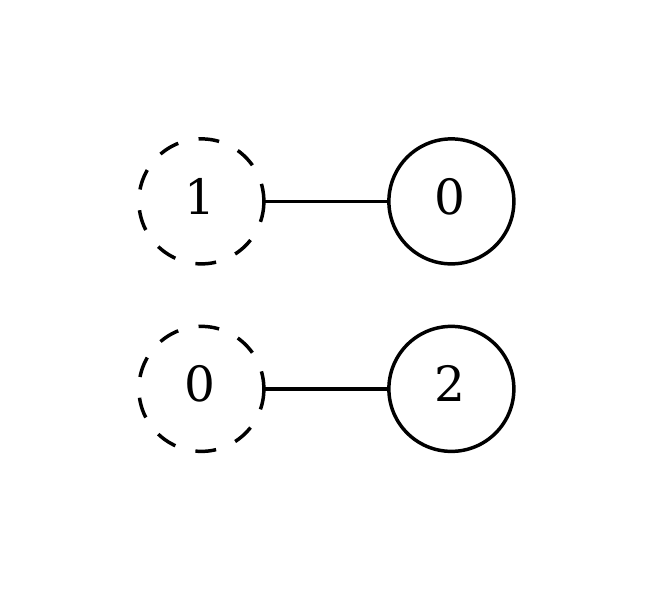}\\
        {\small$\ext_{1,1}(001000100010)$}
    \end{minipage}
    \begin{minipage}[b]{.23\linewidth}\centering
        \includegraphics[scale=.36]{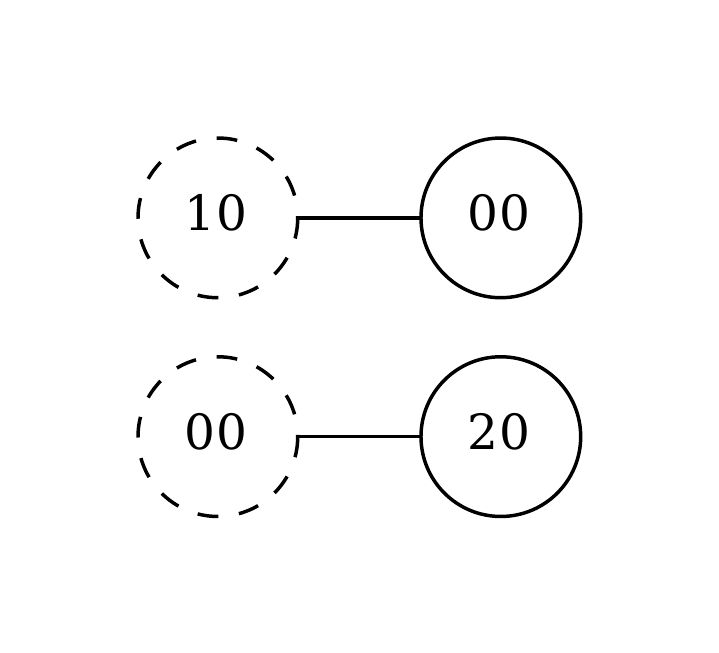}\\
        {\small$\ext_{2,2}(01000100010)$}
    \end{minipage}
    \begin{minipage}[b]{.23\linewidth}\centering
        \includegraphics[scale=.32]{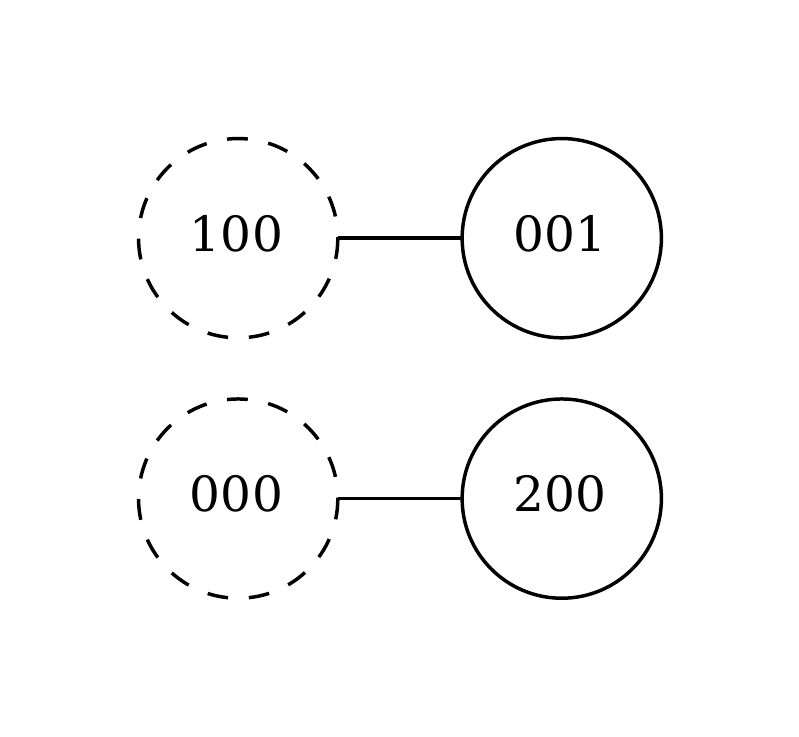}\\
        {\small$\ext_{3,3}(1000100010)$}
    \end{minipage}
    \begin{minipage}[b]{.23\linewidth}\centering
        \includegraphics[scale=.3]{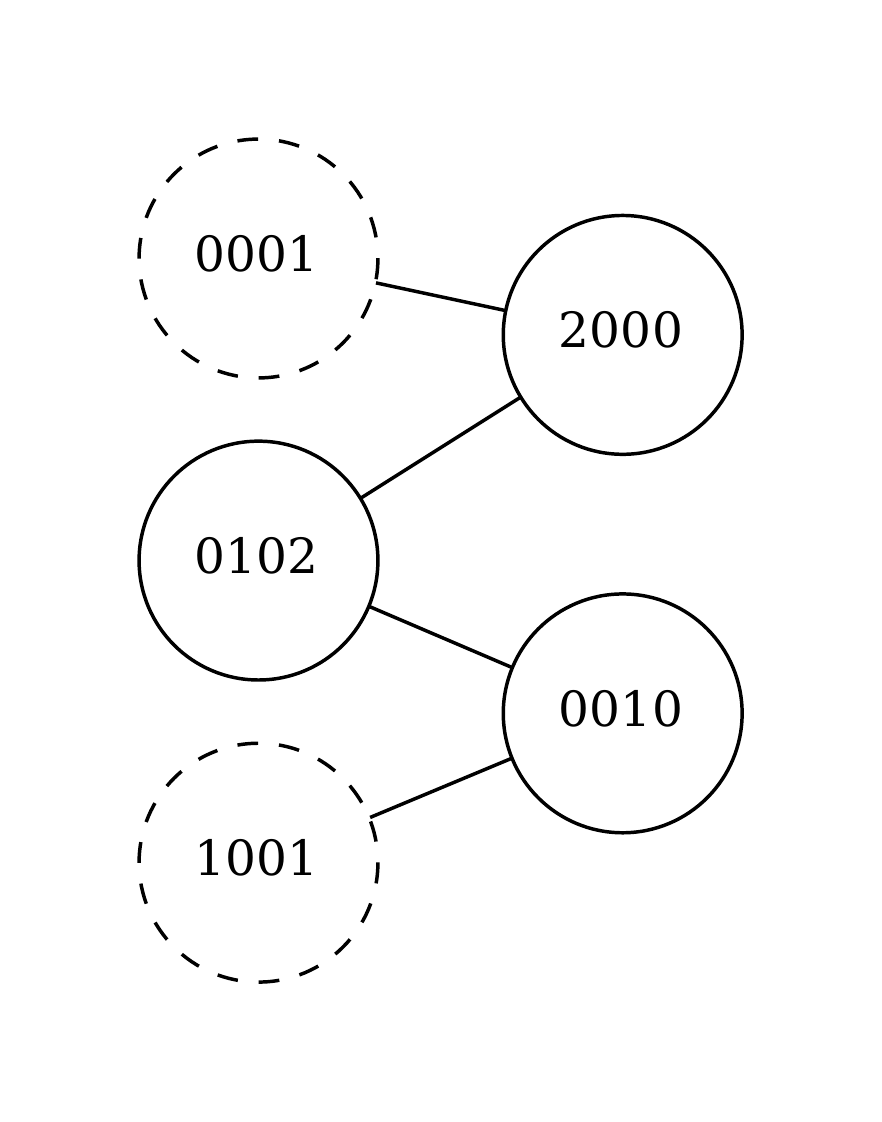}\\
        {\small$\ext_{4,4}(000100010)$}
    \end{minipage}
    \caption{Suffix extension graphs of $w_0=001000100010$ at depth up to 4. The dashed vertices represent the natural embeddings of $\lext(w_0)$.}\label{f:suff-ext-w0}
\end{figure}

\begin{lemma}
    For all $k\in\nn$, the following hold:
    \begin{enumerate}
        \item $x_{k+1} = \begin{cases} \varphi(x_k)00 & \text{ if $k$ is even;}\\ \varphi(x_k)0 & \text{ if $k$ is odd.} \end{cases}$
        \item $x_k0$ is a prefix of $\varphi^k(0)$.
    \end{enumerate}
\end{lemma}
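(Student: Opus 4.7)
The plan is to prove both statements by induction on $k$, with the first part providing the arithmetic backbone for the second. Before starting the induction for (1), I would record one preliminary observation: the last letter of $\varphi^k(2)$ alternates, being $2$ when $k$ is even and $1$ when $k$ is odd. This is itself a one-line induction: $\varphi(2)$ ends with $1$ and $\varphi(1)$ ends with $2$, so applying $\varphi$ swaps which of $\{1,2\}$ sits at the end. Call this last letter $a_k$, so that $\varphi^k(2) = x_k a_k$.

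For (1), applying $\varphi$ to the factorisation $\varphi^k(2) = x_k a_k$ gives $\varphi^{k+1}(2) = \varphi(x_k)\varphi(a_k)$. When $k$ is even, $a_k = 2$ and $\varphi(a_k) = 001$, so stripping the last letter yields $x_{k+1} = \varphi(x_k)\,00$. When $k$ is odd, $a_k = 1$ and $\varphi(a_k) = 02$, so $x_{k+1} = \varphi(x_k)\,0$. This is precisely the claimed recursion.

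For (2), I would induct on $k$, with the base case $x_0\cdot 0 = 0 = \varphi^0(0)$ being immediate. For the inductive step, suppose $\varphi^k(0) = x_k\cdot 0\cdot s$ for some suffix $s$. Applying $\varphi$ gives
\begin{equation*}
    \varphi^{k+1}(0) = \varphi(x_k)\cdot\varphi(0)\cdot\varphi(s) = \varphi(x_k)\cdot 0001\cdot\varphi(s).
\end{equation*}
When $k$ is even, part (1) gives $x_{k+1}\cdot 0 = \varphi(x_k)\cdot 000$, which is clearly a prefix of $\varphi(x_k)\cdot 0001\cdot\varphi(s)$. When $k$ is odd, $x_{k+1}\cdot 0 = \varphi(x_k)\cdot 00$, which is also a prefix. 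This closes the induction.

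I do not expect a genuine obstacle here: once the alternating-last-letter observation is in hand, both parts reduce to direct bookkeeping with the definitions of $\varphi$ and $\init$, and the cases split cleanly along the parity of $k$.
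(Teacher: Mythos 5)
Your proof is correct and takes essentially the same route as the paper: part (1) by applying $\varphi$ to the factorization $\varphi^k(2)=x_ka_k$ (the paper leaves the alternation of the last letter $a_k$ implicit, which you spell out), and part (2) by the same induction writing $\varphi^k(0)=x_k0s$ and expanding $\varphi^{k+1}(0)=\varphi(x_k)0001\varphi(s)$.
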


\begin{proof}
    \textbf{(1)} If $k$ is even, then $\varphi^k(2) = x_k2$ and $\varphi^{k+1}(2) = x_{k+1}1$. It follows that
    \begin{equation*}
        x_{k+1}1 = \varphi(x_k2) = \varphi(x_k)001.
    \end{equation*}
    Hence, the result follows. Similarly, if $k$ is odd, $\varphi^k(2) = x_k1$, $\varphi^{k+1}(2) = x_{k+1}2$, and
    \begin{equation*}
        x_{k+1}2 = \varphi(x_k1) = \varphi(x_k)02.
    \end{equation*}

    \textbf{(2)} We proceed by induction on $k$. The basis, $k=0$, is obvious. Let us assume $\varphi^k(0) = x_k0t_k$, for some $t_k\in A^*$. Hence,
    \begin{equation*}
        \varphi^{k+1}(0) = \varphi(x_k0t_k) = \varphi(x_k)0001\varphi(t_k) = 
            \begin{cases} 
                x_{k+1}01\varphi(t_k) & \text{ if $k$ even;}\\
                x_{k+1}001\varphi(t_k)& \text{ if $k$ odd.}
            \end{cases}\qedhere
        \end{equation*}
\end{proof}

By the recursive definition of $w_k$ and a straightforward inductive argument involving Part (1) of the lemma, we have $\varphi^k(y_0)x_k=y_k$ for all $k\in\nn$. Moreover, note that $x_k$ is a prefix of both $\varphi^k(2000)$ and $\varphi^k(0010)$, the former by definition and the latter by part (2) of the lemma. Since $2000$ and $0010$ are right extensions of $y_0$ (see Figure~\ref{f:suff-ext-w0}), it follows that $x_k^{-1}\varphi^k(2000)$ and $x_k^{-1}\varphi^k(0010)$ are right extensions of $y_k$. With these observations in mind, we are ready to conclude the proof of Theorem~\ref{t:example}. We do this by establishing the following claim.
\begin{claim}
    For $k\geq 1$, there is a path in $\ext_{d_k,d_k}(y_k)$ between $1\varphi^k(001)$ and $2\varphi^k(001)$.
\end{claim}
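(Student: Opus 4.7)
The approach is to lift a path from the (connected) extension graph $\ext_{4,4}(y_0)$ to $\ext_{d_k, d_k}(y_k)$ via the substitution $\varphi^k$, exploiting the factorization $y_k = \varphi^k(y_0) x_k$ established just above. Since $\ext_{4,4}(y_0)$ is connected and contains both $0001$ and $1001$ (the natural embeddings of the two elements of $\lext(w_0) = \{0,1\}$), we fix once and for all a path
\[ 1001 = u_0, v_0, u_1, v_1, \ldots, v_{n-1}, u_n = 0001 \]
in $\ext_{4,4}(y_0)$, with $u_i y_0 v_i$ and $u_{i+1} y_0 v_i$ in $L$ for each $i$.

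We translate this path through $\varphi^k$ as follows. A brief look at $L \cap A^4$ shows that every $v_i \in \rext_4(y_0)$ must begin with $0$ or $2$ (since $y_0$ ends with $010$ and, by Step~\ref{step:classification}, $\rext(010) = \{0, 2\}$), whence $x_k$ is a prefix of $\varphi^k(v_i)$ by the preceding lemma. This allows us to factor
\[ \varphi^k(u_i y_0 v_i) = \varphi^k(u_i) \cdot y_k \cdot (x_k^{-1} \varphi^k(v_i)) \in L. \]
Defining $u'_i$ as the suffix of $\varphi^k(u_i)$ of length $d_k$ and $v'_i$ as the prefix of $x_k^{-1}\varphi^k(v_i)$ of length $d_k$, we obtain $u'_i y_k v'_i \in L$ as a factor of the above, and similarly $u'_{i+1} y_k v'_i \in L$, giving consecutive edges in $\ext_{d_k, d_k}(y_k)$. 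The length conditions $|\varphi^k(u_i)| \geq d_k$ and $|x_k^{-1} \varphi^k(v_i)| \geq d_k$ needed to extract these suffixes and prefixes will be verified by a finite case analysis: by Step~\ref{step:classification}, every $u_i$ must lie in $\{0001, 1001, 2001, 0102\}$ and every $v_i$ in a similarly small list, and the absence of $11$ in $L$ caps the density of ones, so the required inequalities reduce to routine estimates in the substitution matrix.

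It remains to identify the endpoints. Applying $\varphi^k$ to $u_0 = 1 \cdot 001$ gives $\varphi^k(u_0) = \varphi^k(1) \varphi^k(001)$, and since $|\varphi^k(001)| = d_k - 1$, the suffix of length $d_k$ is the last letter of $\varphi^k(1)$ followed by $\varphi^k(001)$. A direct induction on $k$ (using $\varphi(1) = 02$ and $\varphi(2) = 001$) shows this last letter is $2$ for odd $k$ and $1$ for even $k$; dually, the leading letter of the length-$d_k$ suffix of $\varphi^k(u_n) = \varphi^k(0) \varphi^k(001)$ is the last letter of $\varphi^k(0)$, which is $1$ for odd $k$ and $2$ for even $k$. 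In either parity the endpoints of the lifted path form the pair $\{1\varphi^k(001), 2\varphi^k(001)\}$, proving the claim. The main obstacle is the length verification in the previous paragraph, but, as noted, it amounts only to checking finitely many linear combinations of $|\varphi^k(0)|$, $|\varphi^k(1)|$, $|\varphi^k(2)|$.
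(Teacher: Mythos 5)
Your proposal is correct and follows essentially the same route as the paper: you lift the connected graph $\ext_{d_0,d_0}(y_0)$ into $\ext_{d_k,d_k}(y_k)$ via $\varphi^k$, using the factorization $y_k=\varphi^k(y_0)x_k$ and sending left vertices to length-$d_k$ suffixes of $\varphi^k(u)$ and right vertices to length-$d_k$ prefixes of $x_k^{-1}\varphi^k(v)$, which is exactly the paper's morphism $\sigma_k$ restricted to a path, with the same well-definedness (length) checks and the same endpoint identification. The only difference is cosmetic: you leave the finite length verification as a routine estimate (and your over-approximating list of possible $u_i$ harmlessly includes $2001\notin L$), whereas the paper carries out the two needed inequalities explicitly for $\rext_{d_0}(y_0)=\{2000,0010\}$.
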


\begin{proof}[Proof of the claim]
    Consider the map $\sigma_k\colon\ext_{d_0,d_0}(y_0)\to\ext_{d_k,d_k}(y_k)$ defined as follows: an element $u\in\lext_{d_0}(y_0)=\{0001,0102,1001\}$ is mapped to the suffix of length $d_k$ of $\varphi^k(u)$, and an element $v\in\rext_{d_0}(y_0) = \{2000, 0010\}$ is mapped to the prefix of length $d_k$ of $x_k^{-1}\varphi^k(v)$. We first need to show that this map is well-defined. This amounts to show that $|\varphi^k(u)|\geq d_k$ for all $u\in\lext(y_0)$, and $|\varphi^k(v)| - |x_k|\geq d_k$ for all $v\in\rext(y_0)$. The former condition is obvious, and the latter boils down to a few computations:
    \begin{align*}
        |\varphi^k(0010)| - |x_k| & = |\varphi^k(001)| + |\varphi^k(0)| - |\varphi^k(2)| + 1 \\
                                  & > |\varphi^k(001)| + 1 = d_k;\\
        |\varphi^k(2000)| - |x_k| & = |\varphi^k(2000)| - |\varphi^k(2)| + 1 \\
                                  & = |\varphi^k(000)| + 1 \\
                                  & > |\varphi^k(001)|+1 = d_k.
    \end{align*}

    Note that $\sigma_k$ maps $\{0001, 1001\}$ onto the natural embedding of $\lext(w_k)$. Since $\ext_{d_0,d_0}(y_0)$ is connected, it suffices to show that $\sigma_k$ defines a graph morphism. Take $u\in\lext_{d_0}(y_0)$ and $v\in\rext_{d_0}(y_0)$, and suppose that $uy_0v\in L$. Then, it follows that $\varphi^k(uy_0v)\in L$. Since $\sigma_k(u)$ is a suffix of $u$ and $x_k\sigma_k(v)$ is a prefix of $v$, we conclude that $\sigma_k(u)y_k\sigma_k(v) = \sigma_k(u)\varphi^k(y_0)x_k\sigma_k(v)$ is a factor of $\varphi^k(uy_0v)$. Therefore, it must also be in $L$, and $\sigma_k\colon\ext_{d_0,d_0}(y_0)\to\ext_{d_k,d_k}(y_k)$ is a graph morphism.
\end{proof}

With some extra work, we were able to show that the map $\sigma_k$ defined in the previous proof is in fact a graph isomorphism. To prove this, we made use of the following observation, which is a consequence of the Cayley-Hamilton theorem: for any word $x\in A^*$, the sequence $(|\varphi^k(x)|)_{k\in\nn}$ follows the linear recurrence determined by the characteristic polynomial of $\varphi$. This is a general observation which holds for any substitution, and we believe it could be useful for establishing suffix-connectedness in harder cases.

\section{Conclusion}
\label{s:conclu}

Let us end this paper by suggesting a few ideas for future reasearch. 

Firstly, we feel that the proof presented in Section~\ref{s:example}, on account of its ad-hoc and technical nature, is somewhat unsatisfactory. We hope it could be improved.
\begin{question}
    Is there a more systematic approach to show that a given language is suffix-connected?
\end{question}

In particular, it could be interesting to study other examples of suffix-connected languages defined by primitive substitutions, and see how much of Section~\ref{s:example} can be recycled. According to our computations, the languages defined by the following primitive substitutions are likely to be suffix-connected while also having infinitely many disconnected elements:
\begin{equation*}
    \begin{array}{lll}
        0 & \mapsto & 100 \\
        1 & \mapsto & 032 \\
        2 & \mapsto & 232 \\
        3 & \mapsto & 03
    \end{array};
        \qquad
    \begin{array}{lll}
        0 & \mapsto & 01 \\
        1 & \mapsto & 2 \\
        2 & \mapsto & 3 \\
        3 & \mapsto & 02
    \end{array};
        \qquad
    \begin{array}{lll}
        0 & \mapsto & 12 \\
        1 & \mapsto & 2 \\
        2 & \mapsto & 01 \\
          & & 
    \end{array}.
\end{equation*}

In \cite{Dolce2020}, Dolce and Perrin introduced the notion of an \emph{eventually dendric language}, which requires all but finitely many extension graphs to be trees. This suggests the analogous notion of an \emph{eventually suffix-connected language}, in which all but finitely many words are suffix-connected. 

\begin{question}
    Can we find a generalization of Theorem~\ref{t:main} for eventually suffix-connected languages?
\end{question}

Finally, Dolce and Perrin also showed that the class of eventually dendric languages is closed under two operations, namely conjugacy and complete bifix decoding \cite{Dolce2020}. We wonder if analogous results hold for suffix-connected languages.
\begin{question}
    Is the class of suffix-connected languages closed under complete bifix decoding or conjugacy?
\end{question}

\providecommand{\bysame}{\leavevmode\hbox to3em{\hrulefill}\thinspace}


\begin{thebibliography}{10}

\bibitem{Almeida2016}
J.~Almeida and A.~Costa, \emph{A geometric interpretation of the {S}ch\"utzenberger group of a minimal subshift}, Ark. Mat. \textbf{54}
  (2016), no.~2, 243--275.

\bibitem{Balchin2017}
S.~Balchin and D.~Rust, \emph{Computations for symbolic substitutions}, J. Integer Seq. \textbf{20} (2017).

\bibitem{Berthe2017a}
V.~Berth{\'{e}}, C.~De Felice, V.~Delecroix, F.~Dolce, J.~Leroy, D.~Perrin, C.~Reutenauer, and G.~Rindone, \emph{Specular sets}, Theoret. Comput. Sci. \textbf{684} (2017), 3--28.

\bibitem{Berthe2015}
V.~Berth{\'{e}}, C.~De Felice, F.~Dolce, J.~Leroy, D.~Perrin, C.~Reutenauer, and G.~Rindone, \emph{Acyclic, connected and tree sets}, Monatsh. Math. \textbf{176} (2015), no.~4, 521--550.

\bibitem{Bondy1976}
J.~A. Bondy and U.~S.~R. Murty, \emph{Graph theory with applications}, Macmillan London, 1976.

\bibitem{Developers2020}
The~Sage Developers, \emph{{S}age{M}ath, the {S}age {M}athematics {S}oftware {S}ystem ({V}ersion 9.2)}, 2020, {\tt https://www.sagemath.org}.

\bibitem{Dolce2017}
F.~Dolce and D.~Perrin, \emph{Neutral and tree sets of arbitrary characteristic}, Theoret. Comput. Sci. \textbf{658} (2017), 159--174.

\bibitem{Dolce2020}
\bysame, \emph{Eventually dendric shift spaces}, Ergodic Theory Dynam. Systems (2020), 1--26.

\bibitem{Durand1998}
F.~Durand, \emph{A characterization of substitutive sequences using return words}, Discrete Math. \textbf{179} (1998), no.~1-3, 89--101.

\bibitem{Fogg2002}
N.~Pytheas Fogg, V.~Berth{\'{e}}, S.~Ferenczi, C.~Mauduit, and A.~Siegel (eds.), \emph{Substitutions in dynamics, arithmetics and combinatorics}, Springer Berlin Heidelberg, 2002.

\bibitem{Kapovich2002}
I.~Kapovich and A.~Myasnikov, \emph{{S}tallings foldings and subgroups of free groups}, J. Algebra \textbf{248} (2002), no.~2, 608--668.

\bibitem{Steinberg2000}
B.~Steinberg, \emph{Fundamental groups, inverse {S}ch{\"{u}}tzenberger automata, and monoid presentations}, Comm. Algebra \textbf{28} (2000),
  no.~11, 5235--5253.

\end{thebibliography}
\end{document}